\let\oldtocsection=\tocsection
\let\oldtocsubsection=\tocsubsection 
\let\oldtocsubsubsection=\tocsubsubsection
\renewcommand{\tocsection}[2]{\vspace{0.5em}\hspace{0em}\oldtocsection{#1}{#2}}
\renewcommand{\tocsubsection}[2]{\vspace{0.5em}\hspace{1em}\oldtocsubsection{#1}{#2}}
\renewcommand{\tocsubsubsection}[2]{\vspace{0.5em}\hspace{2em}\oldtocsubsubsection{#1}{#2}}
\newtheorem{theoreme}{Theorem}[section]
\theoremstyle{definition}
\numberwithin{equation}{section}
\renewenvironment{proof}{{\bfseries \noindent Proof.}}{\demo}
\newcommand\xqed[1]{%
	\leavevmode\unskip\penalty9999 \hbox{}\nobreak\hfill
	\quad\hbox{#1}}
\newcommand\demo{\xqed{$\square$}}
\def\R{\mathbb R}
\def\Z{\mathbb Z}
\def\N{\mathbb N}
\def\HH{\mathcal H}
\def\AA{\mathcal A}
\def\la {{\lambda}}
\newcommand {\nc}   {\newcommand}
\nc {\be}   {\begin{equation}} \nc {\ee}   {\end{equation}} \nc
\nc {\eeq}  {\end{eqnarray}} \nc {\beqs}
\nc {\eeqs} {\end{eqnarray*}}
\def\edc{\end{document}}
\providecommand{\abs}[1]{\lvert#1\rvert}
\numberwithin{equation}{section}
\theoremstyle{Thm}
\newtheorem{Thm}{Theorem}[section]
\newtheorem{lem}{Lemma}[section]
\newtheorem{prop}{Proposition}[section]
\newtheorem{rk}{Remark}[section]
\definecolor{carnelian}{rgb}{0.7, 0.11, 0.11}
\definecolor{carmine}{rgb}{0.59, 0.0, 0.09}
\definecolor{burgundy}{rgb}{0.5, 0.0, 0.13}
\definecolor{darkmidnightblue}{rgb}{0.0, 0.2, 0.4}
\definecolor{dimgray}{rgb}{0.75, 0.75, 0.75}
\definecolor{palecarmine}{rgb}{0.69, 0.25, 0.21}
\newcounter{dummy} 
\numberwithin{dummy}{section}
\newtheorem{Theorem}[dummy]{Theorem}
\newtheorem{defi}[dummy]{Definition}
\numberwithin{equation}{section}
\def\AA{\mathcal A}
\def\HH{\mathbf{\mathcal H}}
\newcommand{\h}{\mathsf{h}}
\newcommand{\f}{\mathsf{f}}
\newcommand{\s}{\mathsf{S}}
\newcommand{\g}{\mathsf{f}_3}
\newcommand{\q}{\mathsf{q}}
\newcommand{\ve}{\mathsf{v}}
\newcommand{\intdx}{\int_{0}^{L} }
\newcommand{\intdnd}{\int_{\alpha }^{\beta } }
\providecommand{\abs}[1]{\lvert#1\rvert}
\begin{document}
	\title[\fontsize{7}{9}\selectfont  ]{On the Stability of  Bresse system with one discontinuous local internal Kelvin-Voigt damping  on the axial force}
\author{Mohammad Akil$^{1}$}
\author{Haidar Badawi$^{2}$}
\author{Serge Nicaise$^{2}$}
\author{Ali Wehbe$^{3}$}
\address{$^1$ Universit\'e Savoie Mont Blanc, Laboratoire LAMA, Chamb\'ery-France}
\address{$^2$ Universit\'e Polytechnique Hauts-de-France (UPHF-LAMAV),
	Valenciennes, France}
\address{$^3$Lebanese University, Faculty of sciences 1, Khawarizmi Laboratory of  Mathematics and Applications-KALMA, Hadath-Beirut, Lebanon.}
\email{mohammad.akil@univ-smb.fr, Haidar.Badawi@etu.uphf.fr, Serge.Nicaise@uphf.fr,   ali.wehbe@ul.edu.lb}
\keywords{Bresse system; Kelvin-Voigt damping;  Strong stability; Polynomial stability; Frequency domain approach}
\begin{abstract}
	In this paper, we investigate the stabilization of a linear Bresse system with one discontinuous local internal viscoelastic damping of Kelvin-Voigt type acting on the axial force, under fully Dirichlet boundary conditions. First, using a general criteria of Arendt-Batty,  we prove the strong stability of our system. Finally, using a frequency domain approach combined with the multiplier method,  we prove that the energy of our system  decays polynomially with different rates.
\end{abstract}
\maketitle
\pagenumbering{roman}
\maketitle
\tableofcontents
\pagenumbering{arabic}
\setcounter{page}{1}
\newpage
\section{Introduction}
\noindent In this paper, we investigate the stability of Bresse system with only one discontinuous local internal Kelvin-Voigt damping on the axial force. More precisely, we consider the following system:
\begin{equation}\label{p3-sysorig}
\left\{	\begin{array}{llll}
	\displaystyle \rho_1 \varphi_{tt}-k_1 (\varphi_x+\psi+lw)_x -lk_3 (w_x-l\varphi)-ld(x)(w_{tx}-l\varphi_t )=0, &(x,t)\in  (0,L) \times (0,\infty),&\vspace{0.15cm}\\
	\displaystyle \rho_2 \psi_{tt}-k_2 \psi_{xx} +k_1 (\varphi_x +\psi+lw)=0,&(x,t)\in  (0,L) \times (0,\infty),&\vspace{0.15cm}\\
\displaystyle \rho_1 w_{tt}-\left[k_3 (w_x-l\varphi)+d(x)(w_{tx}-l\varphi_t) \right]_x+lk_1 (\varphi_x+\psi+lw)=0, &(x,t)\in  (0,L) \times (0,\infty),
	\end{array}
	\right.
\end{equation}
with the following Dirichlet boundary conditions 
\begin{equation}
	\varphi(0,t)=\varphi(L,t)=\psi(0,t)=\psi(L,t)=w (0,t)=w(L,t)=0, \ \ t>0,
\end{equation}
and the following initial conditions
\begin{equation}\label{p3-initialcond}
\left\{\begin{array}{lll}
\displaystyle 	\varphi(x,0)=\varphi_0(x), \ \varphi_t (x,0)=\varphi_1(x), \ \psi(x,0)=\psi_0(x),  \ x\in(0,L),\vspace{0.15cm}\\
\displaystyle  \psi_t(x,0)=\psi_1(x), \ w(x,0)=w_0(x), \ w_t (x,0)=w_1(x), \ x\in(0,L),
	\end{array}
	\right.
\end{equation}
where $\rho_1, \rho_2, k_1, k_2, k_3, l $ and $L$ are  positive real numbers. We suppose that there exists $0<\alpha<\beta<L$ and a positive constant $d_0$ such that 
\begin{equation}\label{p3-a}
	d(x)=\left\{\begin{array}{lll}
	d_0 & \text{if} & x\in (\alpha,\beta),\vspace{0.15cm}\\
		0 & \text{if} & x\in (0,\alpha)\cup(\beta,L).
	\end{array}
	\right.
\end{equation}
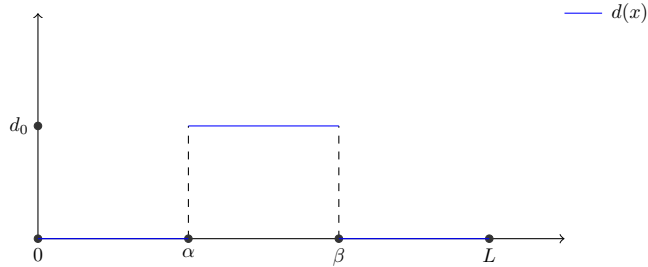
\begin{figure}[h!]
	\begin{center}
		\begin{tikzpicture}
		\draw[->](0,0)--(7,0);
		\draw[->](0,0)--(0,3);

		\draw[dashed](2,0)--(2,1.5);
		\draw[dashed](4,0)--(4,1.5);

		\node[black,below] at (2,0){\scalebox{0.75}{$\alpha$}};
		\node at (2,0) [circle, scale=0.3, draw=black!80,fill=black!80] {};

		\node[black,below] at (4,0){\scalebox{0.75}{$\beta $}};
		\node at (4,0) [circle, scale=0.3, draw=black!80,fill=black!80] {};
		
		\node[black,below] at (6,0){\scalebox{0.75}{$L $}};
		\node at (6,0) [circle, scale=0.3, draw=black!80,fill=black!80] {};

		
		\node[black,below] at (0,0){\scalebox{0.75}{$0$}};
		\node at (0,0) [circle, scale=0.3, draw=black!80,fill=black!80] {};
		
		\node at (0,1.5) [circle, scale=0.3, draw=black!80,fill=black!80] {};
		\node[black,left] at (0,1.5){\scalebox{0.75}{$d_0$}};

		\node[black,right] at (7.5,3){\scalebox{0.75}{$d(x)$}};

		\draw[-,blue](7,3)--(7.5,3);
		\draw[-,blue](0,0)--(2,0);
		\draw[-,blue](2,1.5)--(4,1.5);
		\draw[-,blue](4,0)--(6,0);

		\end{tikzpicture}
	\end{center}
	\caption{Geometric description of the function $d(x)$.}\label{p3-Fig1}
\end{figure}\\
The Bresse system  is a model for arched beams, see \cite[Chap. 6]{LagneseLeugering01}. It can be expressed by the equations of motion:
\begin{equation}\label{p3-1.5}
\left\{\begin{array}{lll}
\displaystyle	\rho_1 \varphi_{tt}=Q_x +lN,\vspace{0.15cm}\\
\displaystyle	\rho_2 \psi_{tt}=M_x -Q,\vspace{0.15cm}\\
\displaystyle	\rho_1 w_{tt}=N_x -lQ,
\end{array}
\right.
\end{equation}
where  $N=k_3 (w_x-l\varphi)+d(x)(w_{tx}-l\varphi_t)$ is the axial force, $Q=k_1 (\varphi_x+\psi+lw)$ is the shear force, and $M=k_2 \psi_{x}$ is the bending moment.
The functions $\varphi$, $\psi$, and $w$ are respectively the vertical, shear angle, and longitudinal displacements.
Here $\rho_1 =\rho A$, $\rho_2 =\rho I$, $k_1 =kGA$, $k_3 =EA$, $k_2 =EI$ and $l=R^{-1}$, in which  $\rho$ is the density of the material, 
$E$ the modulus of the elasticity, $G$ the shear modulus, $k$ the shear factor,  $A$ the cross-sectional area,  $I$ the second moment of area of the cross section, $R$ the radius of the curvature, and $l$ the curvature.\\ \linebreak  There are several publications concerning the stabilization of Bresse system with different kinds of damping (see \cite{Wehbe08}, \cite{doi:10.1002/mma.6070}, \cite{ALABAUBOUSSOUIRA2011481}, \cite{deLima2018}, \cite{ElArwadi2019}, \cite{doi:10.1080/00036811.2018.1520982}, \cite{FATORI2012600}, \cite{10.1093/imamat/hxq038}, \cite{Guesmia2017}, \cite{doi:10.1002/mma.3228}  \cite{RaoLiu03}, \cite{Wehbe03}, \cite{Wehbe02} and \cite{Wehbe01}). We note that by neglecting $w$ ($l \to 0$) in \eqref{p3-1.5}, the Bresse system  reduces to the following conservative Timoshenko system: 
\begin{equation*}
\begin{array}{lll}
\rho_1 \varphi_{tt}-k_1 (\varphi_x+\psi)_x=0,\vspace{0.25cm}\\
\rho_2 \psi_{tt}-k_2 \psi_{xx}+k_1 (\varphi_x+\psi)=0.
\end{array}
\end{equation*}
There are also several publications concerning the stabilization of Timoshenko system  with different kinds of damping (see  \cite{Akil2020}, \cite{BASSAM20151177}, \cite{doi:10.1002/zamm.201500172} and  \cite{doi:10.1080/00036810903156149}).\\ \linebreak
In the recent years, many researchers showed interest in problems involving  Kelvin-Voigt  damping where different types of stability, depending on the smoothness of the damping coefficients,  has been showed (see \cite{Alves2014}, \cite{Alves2013}, \cite{F.HASSINE2015}, \cite{HASSINE201584}, \cite{Huang-1988}, \cite{chenLiuLiu-1998},  \cite{Liu2016},  \cite{doi:10.1137/15M1049385}, \cite{Portillo2017} and \cite{rivera2018}). Moreover, there is a number of new results concerning systems with local Kelvin-Voigt damping and non-smooth coefficients at the interface (see \cite{akilbadawiwehbe2020stability}, \cite{akilissawehbe}, \cite{doi:10.1002/mma.6918}, \cite{ghader2020stability}, \cite{ghader2020transmission2}, \cite{Hayek} and \cite{NASSER2019272}).\\ \linebreak 
Among this vast literature let us recall some  specific results on the Bresse systems.\\ \linebreak
\indent In 2017, Guesmia in \cite{Guesmia2017} studied the stability of Bresse system with one infinite memory in the longitudinal displacement (i.e. third equation) under Dirichlet-Neumann-Neumann boundary conditions, he established some stability 
results provided that the curvature $l$ and the memory kernel $g$ satisfy:
$$l<\tilde{l}, \ \ \text{for some}\ \  \tilde{l}>0 \quad \text{and} \quad g^0:=\int_{0}^{\infty}g(s)ds<\tilde{g}, \ \ \text{for some}\ \  \tilde{g}>0. $$
In 2018, Afilal {\it et al.} in \cite{doi:10.1002/mma.6070} studied the stability of Bresse system with global frictional damping in the longitudinal displacement, by considering the following system on $(0,1)\times (0,\infty)$:
\begin{equation}\label{p3-1.6}
\left\{	\begin{array}{llll}
\displaystyle \rho_1 \varphi_{tt}-k_1 (\varphi_x+\psi+lw)_x -lk_3 (w_x-l\varphi)=0, \vspace{0.15cm}\\
\displaystyle \rho_2 \psi_{tt}-k_2 \psi_{xx} +k_1 (\varphi_x +\psi+lw)=0,\vspace{0.15cm}\\
\displaystyle\rho_1 w_{tt}-k_3 (w_x-l\varphi)+lk_1 (\varphi_x+\psi+lw)+\delta w_t=0, 
\end{array}
\right.
\end{equation}
with the initial conditions \eqref{p3-initialcond} where $L=1$ and under mixed boundary conditions of the form:
$$
\left\{ \begin{array}{lll}
\displaystyle \varphi(0,t)=\psi_x(0,t)=w_x (0,t)=0, \quad \text{in} \ \ (0,\infty),\vspace{0.15cm}\\  \displaystyle \varphi_x(1,t)=\psi(1,t)=w(1,t)=0,\ \, \quad \text{in} \ \ (0,\infty),
\end{array}
\right.
$$ 
where $\delta$ is a positive real number, they assumed that:
\begin{equation}\label{p3-1.7*}
l\neq \frac{\pi}{2}+m\pi, \quad \forall \,  m\in \N.
\end{equation} 
They proved under \eqref{p3-1.7*} the strong stability of  system \eqref{p3-1.6} provided that the curvature $l$ satisfies:
\begin{equation}\label{p3-1.8*}
l^2\neq \frac{\rho_2k_3+\rho_1k_2}{\rho_2 k_3}\left(\frac{\pi}{2}+m\pi\right)^2+\frac{\rho_1k_1}{\rho_2(k_1+k_3)}, \ \ \forall m\in \Z.
\end{equation}
Also, they established under \eqref{p3-1.7*} and \eqref{p3-1.8*} the exponential stability of system \eqref{p3-1.6} if and only if  $\dfrac{k_1}{\rho_1}=\dfrac{k_2}{\rho_2}=\dfrac{k_3}{\rho_1}$. Otherwise, they established polynomial energy decay rate of order   $t^{-\frac{1}{4}}$. In 2019, Fatori {\it et al.} in \cite{doi:10.1080/00036811.2018.1520982} proved under 
\begin{equation}\label{p3-1.7**}
lL \quad \text{is not a multiple of} \quad \pi,
\end{equation}
the strong stability of system \eqref{p3-1.6} on $(0,L)\times (0,\infty)$ under Dirichlet-Neumann-Neumann boundary conditions provided that:
\begin{equation}\label{p3-1.7}
k_1\rho_1 -\rho_2 (k_3+k_1)l^2\geq 0 \quad \text{or} \quad 0< \rho_2 (k_3+k_1)l^2 -k_1\rho_1 \neq \frac{\rho_1\rho_2(k_3+k_1)}{k_3}\left(\frac{k_3}{\rho_1}n^2+\frac{k_2}{\rho_2}m^2 \right)\frac{\pi^2}{L^2},
\end{equation}
for all $m \in \N$ and $n \in \N^{\star}$. Also, they established under \eqref{p3-1.7**} and \eqref{p3-1.7} the exponential stability  of system \eqref{p3-1.6} on $(0,L)\times (0,\infty)$ if and only if \begin{equation}\label{p3-1.8}
\dfrac{\rho_1}{\rho_2}=\dfrac{k_1}{k_2} \quad \text{and}\quad  k_1=k_3.
\end{equation}
Moreover, they used the previous results (i.e. strong and exponential stability of \eqref{p3-1.6} on $(0,L)\times (0,\infty)$) to obtain under \eqref{p3-1.7**}, \eqref{p3-1.7} and \eqref{p3-1.8} the exponential stability of Bresse system with indefinite memory in the longitudinal displacement under Dirichlet-Neumann-Neumann boundary conditions.\\ \linebreak
 \indent In 2019, El Arwadi and Youssef in \cite{ElArwadi2019} studied the stabilization of the Bresse beam with three global Kelvin-Voigt damping under fully Dirichlet boundary conditions, they established an exponential energy decay rate. In 2020, Gerbi {\it et al.} in \cite{gerbi2020stabilization} studied the stabilization of non-smooth transmission problem involving Bresse systems with fully Dirichlet or Dirichlet-Neumann-Neumann boundary conditions, by considering system \eqref{p3-1.5} on $(0,L)\times (0,\infty)$ with
$$
N=k_3 (w_x-l\varphi)+D_3 (w_{xt}-l\varphi_t), \ \ Q=k_1 (\varphi_x+\psi+lw)+D_1 (\varphi_{xt}+\psi_t+lw_t ), \ \ M=k_2 \psi_x +D_2 \psi_{xt},
$$
where $D_1$, $D_2$ and $D_3 $ are bounded positive functions over $(0,L)$.
 They established: 
\begin{itemize}
	\item Analytic stability in the case of three global Kelvin-Voigt dampings  (i.e. $D_i \in L^\infty (0,L)$, $D_i \geq d_0 >0 \ \text{in}\ (0,L)$, $i=1,2,3$). \\ 
	\item Exponential stability in the case of three local Kelvin-Voigt dampings with smooth coefficients at the interface  (i.e. $D_i \in W^{1,\infty}(0,L)$, $D_i \geq d_0 >0 \ \text{in}\ \emptyset\neq \omega:=(\alpha,\beta)\subset (0,L)$, $i=1,2,3$).\\
	\item Polynomial energy decay rate of order $t^{-1}$ in the case of  three local Kelvin-Voigt dampings with non-smooth coefficients at the interface  (i.e. $D_i \in L^\infty (0,L)$, $D_i \geq d_0^i >0 \ \text{in}\ (\alpha_i,\beta_i)\subset (0,L)$, $i=1,2,3$, and $\displaystyle \bigcap_{i=1}^3 (\alpha_i,\beta_i)=\omega $).\\
	\item  Polynomial stability energy decay rate  of order $t^{-\frac{1}{2}}$ in the case of one local Kelvin-Voigt damping on the bending moment with non-smooth coefficient at the interface (i.e.  $D_1=D_3=0 $, $D_2\in L^\infty (0,L)$ and $D_2 \geq d_0 >0 $ in $\omega$).
\end{itemize}.\\
  But to the best of our knowledge, it seems that  no result in the literature exists concerning the case of Bresse system with only one discontinuous local internal Kelvin-Voigt damping on the axial force, especially under fully Dirichlet boundary conditions and without any condition on the curvature $l$. The goal of the present paper is to fill this gap by studying the stability of system \eqref{p3-sysorig}-\eqref{p3-initialcond}.\\\linebreak 
  This paper is organized as follows: In Section \ref{p3-WPS}, we prove the well-posedness of our system by using semigroup approach. In Section \ref{p3-sec3}, following a general criteria of Arendt Batty, we show the strong stability of our system in the absence of the compactness of the resolvent. Finally, in Section \ref{p3-secpoly}, by using the frequency domain approach  combining with a specific  multiplier method,  we prove that the energy of our system  decays polynomially with the rates:
  \begin{equation*}
  \left\{	\begin{array}{lll}
  \displaystyle	t^{-1} \quad \text{if} \quad \frac{k_1}{\rho_1}=\frac{k_2}{\rho_2},\vspace{0.15cm}\\
 \displaystyle 		t^{-\frac{1}{2}} \quad \text{if} \quad \frac{k_1}{\rho_1}\neq\frac{k_2}{\rho_2}.
  	\end{array}
  	\right.
  \end{equation*}

\section{Well-posedness of the system}\label{p3-WPS}
\noindent In this section,  we will establish the well-posedness of system \eqref{p3-sysorig}-\eqref{p3-initialcond} by using semigroup approach.
The energy of system \eqref{p3-sysorig}-\eqref{p3-initialcond} is given by 
$$
\begin{array}{lll}
\displaystyle E(t)=\frac{1}{2}\intdx \left(\rho_1 \left|\varphi_t\right|^2+\rho_2|\psi_t|^2 +\rho_1 |w_t|^2 +k_1 |\varphi_x+\psi+lw|^2 +k_2 |\psi_x|^2 +k_3 |w_x -l\varphi|^2\right) dx.
\end{array}  
$$
Let $(\varphi,\varphi_t,\psi,\psi_t,w,w_t)$ be a regular solution of system \eqref{p3-sysorig}-\eqref{p3-initialcond}. Multiplying the equations in \eqref{p3-sysorig} by $\overline{\varphi_t}$, $\overline{\psi_t}$ and $\overline{w_t}$ respectively, Then using the boundary conditions \eqref{p3-bc} and the definition of $d(x)$ (see \eqref{p3-a} and Figure \ref{p3-Fig1}), we obtain  
\begin{equation}\label{p3-2.12}
	E^{\prime }(t)=-\intdx d(x) |w_{tx}-l\varphi_t|^2 dx=-d_0\intdnd  |w_{tx}-l\varphi_t|^2 dx \leq 0.
\end{equation}
From \eqref{p3-2.12}, system \eqref{p3-sysorig}-\eqref{p3-initialcond} is dissipative in the sense that its energy is non-increasing with respect to time. Now, we define the following Hilbert space $\HH$ by:
$$
\HH:=\left( H^1_0(0,L)\times L^2 (0,L)\right)^3.
$$
The Hilbert space $\HH$ is equipped with the following inner product  and norm 
$$
\begin{array}{lll}
\displaystyle (U,U^1 )_{\HH}=\intdx \left\{ k_1(v^{1}_x+v^3+lv^5)\overline{(\widetilde{v^{1}_x}+\widetilde{v^3}+l\widetilde{v^5})}+\rho_1 v^2\overline{\widetilde{v^2}}+k_2 v^3_x \overline{\widetilde{v^3_x}} +\rho_2 v^4\overline{\widetilde{v^4}}\right.\vspace{0.25cm}
\\
\hspace{3cm}\displaystyle \left. +\, k_3 (v^5_x-lv^1)(\overline{\widetilde{v^5_x}-l\widetilde{v^1}})dx+\rho_1 v^6\overline{\widetilde{v^6}}\right\}dx
\end{array}
$$
 and 
\begin{equation}\label{p3-normU}
\|U\|_{\HH}^2 = \intdx \left(k_1|v^1_x+v^3+lv^5|^2  +\rho_1 |v^2|^2 +k_2 |v^3_x|^2 +\rho_2 |v^4|^2 +k_3 |v^5_x -lv^1 |^2 +\rho_1 |v^6|^2 \right) dx. 
\end{equation}
\noindent Where  $U=(v^1,v^2,v^3,v^4,v^5,v^6)^{\top}\in \HH$ and $\widetilde{U} =(\widetilde{v^1}, \widetilde{v^1},\widetilde{v^2},\widetilde{v^3},\widetilde{v^4},\widetilde{v^5}, \widetilde{v^6} )^{\top}\in\HH$. Now, we define the linear unbounded  operator $\AA:D(\AA)\subset \HH\longmapsto \HH$  by:
\begin{equation}
D(\AA)=\left\{\begin{array}{cc}\vspace{0.25cm}
U=(v^1,v^2,v^3,v^4,v^5,v^6)^{\top}\in \HH \,\,|\,\, v^1,v^3 \in H^2 (0,L)\cap H^{1}_0 (0,L) \\\vspace{0.25cm}
\displaystyle v^2,v^4,v^6 \in H_0^1 (0,L), \ \ \left[k_3v^5_x +d(x)(v^6_x-lv^2) \right]_x\in L^2(0,L)
\end{array}\right\} 
\end{equation}and 
\begin{equation}\label{p2-op}
\AA\begin{pmatrix}
v^1\\v^2\\v^3\\v^4\\v^5\\v^6
\end{pmatrix}=
\begin{pmatrix} 
v^2\\\displaystyle  \frac{k_1}{\rho_1}(v^{1}_x+v^3+lv^5 )_x+\frac{lk_3}{\rho_1}(v^{5}_x-lv^1 )+\frac{ld(x)}{\rho_1}(v^6_x-lv^2)\\\displaystyle v^4\\\displaystyle \frac{k_2}{\rho_2}v^{3}_{xx}  -\frac{k_1}{\rho_2}(v^{1}_x+v^3+lv^5 )\\\displaystyle v^6 \vspace{0.15cm}
\\\displaystyle \frac{1}{\rho_1}\left[k_3(v^{5}_x-lv^1 )+d(x)(v^6_x-lv^2) \right]_x-\frac{lk_1}{\rho_1}(v^{1}_x+v^3+lv^5)
\end{pmatrix},
\end{equation} 
for all $U=(v^1,v^2,v^3,v^4,v^5,v^6)^{\top}\in D(\AA)$.\\
In this sequel, $\|\cdot\|$ will denote the usual norm of $L^2 (0,L)$.\\
\begin{rk}\label{p3-rknorm}
{\rm	From Poincar\'e inequality, we deduce that there exists a positive constant $c_1$ such that 
	\begin{equation*}
		k_1 \|v^1_x+v^3+lv^5\|^2 +k_2 \|v^3_x\|^2 +k_3 \|v^5_x-lv^1\|^2 \leq c_1 \left(\|v^1_x\|^2+\|v^3_x\|^2+\|v^5_x\|^2\right),  \ \ \forall (v^1,v^3,v^5)\in \left(H^1_0(0,L)\right)^3.
	\end{equation*}
	Moreover, we can show by a contradiction argument that there exists a positive constant $c_2$ such that
	\begin{equation*}
 c_2 \left(\|v^1_x\|^2+\|v^3_x\|^2+\|v^5_x\|^2\right) \leq k_1 \|v^1_x+v^3+lv^5\|^2 +k_2 \|v^3_x\|^2 +k_3 \|v^5_x-lv^1\|^2 ,  \ \ \forall (v^1,v^3,v^5)\in \left(H^1_0(0,L)\right)^3.
\end{equation*}
Therefore, the norm defined in \eqref{p3-normU} is equivalent to the usual norm of $\HH$.\xqed{$\square$}
}
\end{rk}
\noindent Now, if $U=(\varphi, \varphi_t,\psi,\psi_t,w,w_t)^{\top}$, then system \eqref{p3-sysorig}-\eqref{p3-initialcond} can be written as the following first order evolution equation 
\begin{equation}\label{p3-firstevo}
U_t =\AA U , \quad U(0)=U_0,
\end{equation}
where  $U_0 =(\varphi_0 ,\varphi_1,\psi_0,\psi_1,w_0,w_1 )^{\top}\in \HH$.
\begin{prop}\label{p3-mdissip}
	{\rm The unbounded linear operator $\AA$ is m-dissipative in the Hilbert space $\HH$.}
\end{prop}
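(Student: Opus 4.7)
The plan is to verify separately that $\AA$ is dissipative and that $I-\AA$ is surjective onto $\HH$.

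\textbf{Dissipativity.} I take $U=(v^1,v^2,v^3,v^4,v^5,v^6)^\top\in D(\AA)$ and compute $(\AA U,U)_\HH$ from the explicit formulas \eqref{p2-op} and the inner product on $\HH$. Wherever a second $x$-derivative of $v^1$, $v^3$ or $v^5$ (or of the $d(x)$-flux) appears in components 2, 4 and~6 of $\AA U$, I integrate by parts once, using that $v^2, v^4, v^6\in H^1_0(0,L)$ to kill the boundary terms. After this step the $k_1$-terms assemble into $(v^2_x+v^4+lv^6)\overline{(v^1_x+v^3+lv^5)}-(v^1_x+v^3+lv^5)\overline{(v^2_x+v^4+lv^6)}$, the $k_2$-terms into $v^4_x\overline{v^3_x}-v^3_x\overline{v^4_x}$, and the $k_3$-terms into $(v^6_x-lv^2)\overline{(v^5_x-lv^1)}-(v^5_x-lv^1)\overline{(v^6_x-lv^2)}$. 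Each of these is of the form $a\bar b-b\bar a$, hence purely imaginary. The only surviving real contribution comes from combining the two $d(x)$-terms, which yields $-d(x)(v^6_x-lv^2)(\overline{v^6_x}-l\overline{v^2})=-d(x)|v^6_x-lv^2|^2$. Consequently
\begin{equation*}
\text{Re}\,(\AA U,U)_\HH \;=\; -\int_0^L d(x)\,|v^6_x-lv^2|^2\,dx \;\leq\; 0,
\end{equation*}
in agreement with \eqref{p3-2.12}.

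\textbf{Maximality.} I will show that $I-\AA$ is onto $\HH$. Given $F=(f^1,\ldots,f^6)^\top\in\HH$, the first, third and fifth equations of $(I-\AA)U=F$ give directly $v^2=v^1-f^1$, $v^4=v^3-f^3$, $v^6=v^5-f^5$. Substituting these into the remaining three equations reduces the problem to finding $(v^1,v^3,v^5)\in V:=(H^1_0(0,L))^3$ that satisfies a coupled second-order elliptic system. Testing against $(\phi^1,\phi^3,\phi^5)\in V$ and integrating by parts yields a variational problem $a((v^1,v^3,v^5),(\phi^1,\phi^3,\phi^5))=L(\phi^1,\phi^3,\phi^5)$, where the data-dependent terms (in particular the contribution $d(x)(f^5_x-lf^1)$ arising from the substitution) are absorbed into the antilinear form $L$; since $f^1,f^5\in H^1_0(0,L)$ by the definition of $\HH$, $L$ is continuous on $V$. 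A direct computation gives
\begin{equation*}
\text{Re}\,a(U_1,U_1) = \int_0^L\!\left(\rho_1|v^1|^2+\rho_2|v^3|^2+\rho_1|v^5|^2+k_1|v^1_x+v^3+lv^5|^2+k_2|v^3_x|^2+(k_3+d(x))|v^5_x-lv^1|^2\right)dx,
\end{equation*}
which by Remark \ref{p3-rknorm} is equivalent to the squared $V$-norm; in particular $a$ is coercive. Since $a$ is also bounded on $V\times V$, the Lax--Milgram theorem delivers a unique $(v^1,v^3,v^5)\in V$.

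\textbf{Recovering the domain condition.} The first and second reduced equations are genuine second-order ODEs for $v^1$ and $v^3$ with $L^2$-right-hand sides and smooth leading coefficients, so elliptic regularity gives $v^1,v^3\in H^2(0,L)\cap H^1_0(0,L)$. The third reduced equation, read in $\mathcal D'(0,L)$, identifies $[k_3(v^5_x-lv^1)+d(x)(v^6_x-lv^2)]_x$ as an $L^2$-function. Together with the velocity relations this places $U$ in $D(\AA)$ and completes the proof. The main technical delicacy is the jump of $d(x)$ at $\alpha$ and $\beta$: one must resist splitting the $k_3$- and $d(x)$-contributions in the third equation (as $v^5\notin H^2$ in general), and instead handle the combined flux as a single distribution—this is precisely why the condition in $D(\AA)$ is phrased in terms of that flux rather than in terms of $v^5_{xx}$.
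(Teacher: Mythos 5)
Your proof is correct, and the core analytic work (dissipativity computation, reduction to a coercive variational problem for $(v^1,v^3,v^5)$, Lax--Milgram, then recovery of the $D(\AA)$ regularity from the distributional equations) is the same as in the paper. The one structural difference is the choice of resolvent equation: the paper solves $-\AA U=F$, i.e.\ proves $0\in\rho(\AA)$, and must then invoke the openness of the resolvent set (Kato) together with Pazy's theorems to pass from invertibility at $0$ to $R(\lambda I-\AA)=\HH$ for small $\lambda>0$ and to the density of $D(\AA)$. You instead solve $(I-\AA)U=F$ directly, which is the definition of m-dissipativity and removes the perturbation step entirely; the price is a slightly larger sesquilinear form, but the extra terms $\rho_1|v^1|^2+\rho_2|v^3|^2+\rho_1|v^5|^2$ and $d(x)|v^5_x-lv^1|^2$ are nonnegative, so coercivity still follows from Remark \ref{p3-rknorm} exactly as in the paper (where only the $k_1,k_2,k_3$ terms are available). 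Your closing observation about keeping $k_3(v^5_x-lv^1)+d(x)(v^6_x-lv^2)$ as a single flux, rather than extracting $v^5_{xx}$, is exactly the point of the way $D(\AA)$ is phrased, and your treatment of the $H^{-1}$ datum $\left[d(x)(f^5_x-lf^1)\right]_x$ by moving the derivative onto the test function is the same device the paper uses in the form $\mathcal{L}$.
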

\begin{proof}
For all $U=(v^1,v^2,v^3,v^4,v^5,v^6)^\top \in D(\AA)$, we have
\begin{equation}\label{p3-Reau}
	\Re (\AA U,U)_{\HH}=-\intdx d(x)\left|v^6_x-lv^2\right|^2 dx = -d_0 \intdnd \left|v^6_x-lv^2\right|^2 dx \leq 0.
\end{equation}
which implies that $\AA$ is dissipative. Let us prove that $\AA$ is maximal. For this aim, let $F=(f^1,f^2,f^3,f^4,f^5,f^6)^{\top}\in\HH$, we look for $U=(v^1,v^2,v^3,v^4,v^5,v^6)^{\top}\in D(\AA)$ unique solution of 
\begin{equation}\label{p3-AU=F}
	-\AA U=F.
\end{equation}
Detailing \eqref{p3-AU=F}, we obtain 
\begin{eqnarray}
-v^2&=&f^1,\label{p3-f1}\\
-k_1\left(v^1_x+v^3+lv^5 \right)_x -lk_3(v^5_x-lv^1)-ld(x)(v^6_x-lv^2)&=&\rho_1 f^2,\label{p3-f2}\\
-v^4&=&f^3,\label{p3-f3}\\
	-k_2v^{3}_{xx} +k_1(v^1_x+v^3+lv^5)&=&\rho_2 f^4,\label{p3-f4}\\
	-v^6&=&f^5,\label{p3-f5}\\
-\left[k_3\left(v^5_x-lv^1\right)+d(x)(v^6_x-lv^2) \right]_x +lk_1(v^1_x+v^3+lv^5)&=&\rho_1f^6,\label{p3-f6}
\end{eqnarray}
with the following boundary conditions
\begin{equation}\label{p3-bc}
	v^1 (0)=v^1(L)=v^3(0)=v^3(L)=v^5(0)=v^5(L)=0.
\end{equation}
By inserting \eqref{p3-f1} and \eqref{p3-f5} in \eqref{p3-f2} and \eqref{p3-f6}, system \eqref{p3-f1}-\eqref{p3-f6} implies:
\begin{eqnarray}
-k_1\left(v^1_x+v^3+lv^5 \right)_x -lk_3(v^5_x-lv^1)&=&\rho_1 f^2+ld(x)(-f^5_x+lf^1),\label{p3-f2*}\\
-k_2v^{3}_{xx} +k_1(v^1_x+v^3+lv^5)&=&\rho_2 f^4,\label{p3-f4*}\\
-\left[k_3\left(v^5_x-lv^1\right)+d(x)(-f^5_x+lf^1) \right]_x +lk_1(v^1_x+v^3+lv^5)&=&\rho_1 f^6.\label{p3-f9}
\end{eqnarray}
	Let $(\phi^1 ,\phi^2,\phi^3) \in \left(H^{1}_0 (0,L)\right)^3$. Multiplying   \eqref{p3-f2*}, \eqref{p3-f4*} and \eqref{p3-f9} by $\overline{\phi^1}$, $\overline{\phi^2}$ and $\overline{\phi^3}$ respectively, integrating over $(0,L)$, then using formal integrations by parts, we obtain
	\begin{equation}\label{p3-vf}
	\mathcal{B}((v^1,v^3,v^5),(\phi^1,\phi^2,\phi^3))=\mathcal{L}((\phi^1,\phi^2,\phi^3)), \ \ \forall (\phi^1,\phi^2,\phi^3)\in \left(H^1_0 (0,L)\right)^3,
	\end{equation}
	where
$$
\begin{array}{lll}
&&\displaystyle 	\mathcal{B}((v^1,v^3,v^5),(\phi^1,\phi^2,\phi^3))=\displaystyle k_1\intdx (v^1_x+v^3 +lv^5)\overline{\phi^1_x }dx -lk_3\intdx (v^5_x -lv^1)\overline{\phi^1}dx + k_2\intdx v^3_x \overline{\phi^2_x}dx \vspace{0.25cm}\\
&&\displaystyle   +\,k_1\int_{0}^L (v^1_x +v^3 +lv^5)\overline{\phi^2}dx 
  +k_3\intdx (v^5_x -lv^1)\overline{\phi^3_x}dx +lk_1\intdx (v^1_x+v^3 +lv^5)\overline{\phi^3}dx
\end{array}
$$
and
$$
\begin{array}{lll}
\displaystyle \mathcal{L}((\phi^1,\phi^2,\phi^3))=\rho_1 \intdx f^2 \overline{\phi^1}dx+l\intdx d(x)(-f^5_x+lf^1)\overline{\phi^1}dx+\rho_2 \intdx f^4 \overline{\phi^2}dx\vspace{0.25cm}\\\hspace{3cm}\displaystyle  +\,\intdx d(x)(f^5_x-lf^1) \overline{\phi^3_x}dx+\rho_1 \intdx f^6 \overline{\phi^3}dx.
\end{array}
$$
It is easy to see that  $\mathcal{B}$ is a sesquilinear and  continuous  form on $\left( H^{1}_0 (0,L)\right)^3  \times \left(H^{1}_0 (0,L)\right)^3 $ and $\mathcal{L}$ is a linear and continuous form on $\left( H^{1}_0 (0,L)\right)^3 $. In fact, from Remark \ref{p3-rknorm}, we deduce that there exists a positive constant $c$ such that 
\begin{equation}
\begin{array}{lll}
\mathcal{B}((v^1,v^3,v^5),(v^1,v^3,v^5))=k_1  \|v^1_x+v^3+lv^5\|^2 +k_2 \|v^3_x\|^2 +k_3  \|v^5_x-lv^1\|^2 \vspace{0.25cm}\\
\hspace{4cm}\geq  c \left(\|v^1_x\|^2+\|v^3_x\|^2+\|v^5_x\|^2 \right)\vspace{0.25cm}\\
\hspace{4cm}	=c \left\|(v^1,v^3,v^5)\right\|^2_{\left(H_0^1 (0,L)\right)^3}.
\end{array}
\end{equation}
Thus, $\mathcal{B}$ is a coercive form on $\left(H^1_0(0,L)\right)^3\times \left(H^1_0(0,L)\right)^3$.  Then, it follows by Lax-Milgram theorem that \eqref{p3-vf} admits a unique solution $(v^1,v^3,v^5)\in \left( H^{1}_0 (0,L)\right)^3 $. By taking test-functions 
 $(\phi^1,\phi^2,\phi^3)\in \left(\mathcal{D} (0,L)\right)^3$, we see that
 \eqref{p3-f2*}-\eqref{p3-f9} hold in the distributional sense, from which we deduce that    $(v^1,v^3)\in  \left(H^2 (0,L)\cap H^{1}_0 (0,L)\right)^2$, while   $ \left[k_3v^5_x +d(x)(v^6_x-lv^2) \right]_x\in L^2(0,L)$. Consequently,  $U=(v^1,-f^1,v^3,-f^3,v^5,-f^5)^{\top} \in D(\AA) $ is the unique solution of \eqref{p3-AU=F}. Then, $\mathcal{A}$ is an isomorphism and since $\rho\left(\mathcal{A}\right)$ is open set of $\mathbb{C}$ (see Theorem 6.7 (Chapter III) in \cite{Kato01}),  we easily get $R(\lambda I -\mathcal{A}) = {\mathcal{H}}$ for a sufficiently small $\lambda>0 $. This, together with the dissipativeness of $\mathcal{A}$, imply that   $D\left(\mathcal{A}\right)$ is dense in ${\mathcal{H}}$   and that $\mathcal{A}$ is m-dissipative in ${\mathcal{H}}$ (see Theorems 4.5, 4.6 in  \cite{Pazy01}). The proof is thus complete.
\end{proof}\\\linebreak
According to Lumer-Philips theorem (see \cite{Pazy01}), Proposition \ref{p3-mdissip} implies that the operator $\AA$ generates a $C_{0}$-semigroup of contractions $e^{t\AA}$ in $\HH$ which gives the well-posedness of \eqref{p3-firstevo}. Then, we have the following result:
\begin{Thm}{\rm
For all $U_0 \in \HH$,  system \eqref{p3-firstevo} admits a unique weak solution $$U(t)=e^{t\AA}U_0\in C^0 (\R^+ ,\HH).
	$$ Moreover, if $U_0 \in D(\AA)$, then the system \eqref{p3-firstevo} admits a unique strong solution $$U(t)=e^{t\AA}U_0\in C^0 (\R^+ ,D(\AA))\cap C^1 (\R^+ ,\HH).$$}
\end{Thm}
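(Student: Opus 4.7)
The plan is to deduce the theorem directly from Proposition \ref{p3-mdissip}, using the classical Lumer--Phillips/Hille--Yosida machinery, so essentially no new analysis is required at this stage.

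First, I would recall that by Proposition \ref{p3-mdissip} the operator $\AA:D(\AA)\subset\HH\to\HH$ is m-dissipative, which by the Lumer--Phillips theorem (see \cite{Pazy01}) means that $\AA$ is the infinitesimal generator of a $C_0$-semigroup of contractions $(e^{t\AA})_{t\geq 0}$ on $\HH$. In particular, $\|e^{t\AA}\|_{\mathcal{L}(\HH)}\leq 1$ for all $t\geq 0$, so the associated abstract Cauchy problem \eqref{p3-firstevo} is well-posed.

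Next, for any $U_0\in\HH$ I would set $U(t):=e^{t\AA}U_0$ and invoke the standard semigroup theory result that $t\mapsto e^{t\AA}U_0$ belongs to $C^0(\R^+,\HH)$, which provides the weak (mild) solution. Uniqueness follows from the fact that any weak solution $\widetilde{U}$ of \eqref{p3-firstevo} must coincide with $e^{t\AA}U_0$ by the standard argument using the variation of constants formula (with zero forcing), or equivalently by the fact that the semigroup is uniquely determined by its generator.

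For the regular case $U_0\in D(\AA)$, I would use the classical statement (Theorem 1.3 of Chapter 4 in \cite{Pazy01}) that for a generator $\AA$ of a $C_0$-semigroup, the orbit $t\mapsto e^{t\AA}U_0$ lies in $C^0(\R^+,D(\AA))\cap C^1(\R^+,\HH)$ whenever $U_0\in D(\AA)$, and satisfies $\tfrac{d}{dt}U(t)=\AA U(t)$ pointwise in $t$, giving the required strong solution. Since the entire argument is a direct citation of the Lumer--Phillips and generation theorems once m-dissipativity is known, there is no genuine obstacle: the main work has already been absorbed in Proposition \ref{p3-mdissip}, and the theorem reduces to writing down these two sentences together with the references.
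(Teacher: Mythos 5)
Your proposal is correct and follows essentially the same route as the paper: the authors likewise deduce the theorem immediately from the m-dissipativity established in Proposition \ref{p3-mdissip} via the Lumer--Phillips theorem, citing \cite{Pazy01} for the standard generation and regularity statements. The only difference is that you spell out the standard semigroup facts (contractivity, uniqueness via the mild-solution formula, and the $C^0(\R^+,D(\AA))\cap C^1(\R^+,\HH)$ regularity for $U_0\in D(\AA)$) in slightly more detail than the paper does, which is harmless.
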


\section{Strong Stability}\label{p3-sec3}
\noindent In this section, we will prove the strong stability of  system \eqref{p3-sysorig}-\eqref{p3-initialcond}. The main result of this section is the following theorem.
\begin{theoreme}\label{p3-strongthm2}
	{\rm	The $C_0-$semigroup of contraction $\left(e^{t\AA}\right)_{t\geq 0}$ is strongly stable in $\HH$; i.e., for all $U_0\in \HH$, the solution of \eqref{p3-firstevo} satisfies 
		$$
		\lim_{t\rightarrow +\infty}\|e^{t\AA}U_0\|_{\HH}=0.
		$$}
\end{theoreme}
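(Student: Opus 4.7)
The plan is to invoke a general criterion of Arendt-Batty: since $\AA$ generates a bounded $C_0$-semigroup on the Hilbert space $\HH$, strong stability follows once we verify that $\sigma(\AA)\cap i\R$ is at most countable and that $\AA$ has no eigenvalue on the imaginary axis. I would in fact aim at the stronger statement $i\R\subset \rho(\AA)$, which splits into injectivity, i.e.\ $\ker(i\la I-\AA)=\{0\}$ for every $\la\in\R$, and surjectivity, i.e.\ $R(i\la I-\AA)=\HH$ for every $\la\in\R$.

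Injectivity is the main content. For $\la=0$, it follows at once from the coercivity of the sesquilinear form $\mathcal{B}$ used in the m-dissipativity proof of Proposition~\ref{p3-mdissip}. For $\la\neq 0$, let $U=(v^1,\ldots,v^6)^\top\in D(\AA)$ satisfy $\AA U=i\la U$. Taking real parts of $(\AA U,U)_\HH$ and invoking \eqref{p3-Reau} forces $v^6_x-lv^2\equiv 0$ on $(\alpha,\beta)$; combined with the spectral relations $v^2=i\la v^1$ and $v^6=i\la v^5$, this yields $v^5_x-lv^1\equiv 0$ on $(\alpha,\beta)$. Substituting these two identities into the first and third components of the spectral equation makes the damping terms drop out algebraically; differentiating the resulting identity from the third equation and comparing with the first forces $v^1\equiv 0$ on $(\alpha,\beta)$. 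Consequently $v^5\equiv c$ is a constant on $(\alpha,\beta)$ and $v^3$ is also constant there, and elimination between the three equations either gives $c=0$ (hence $U\equiv 0$ on $(\alpha,\beta)$) or pins $\la^2$ to the single value $k_1(\rho_1+l^2\rho_2)/(\rho_1\rho_2)$. In the former case, all Cauchy data of $(v^1,v^3,v^5)$ vanish at $x=\alpha$, so uniqueness for the linear ODE system governing the undamped interval $(0,\alpha)$ gives $v^1=v^3=v^5\equiv 0$ there, and the same argument on $(\beta,L)$ concludes $U=0$. The exceptional value of $\la^2$ must be handled separately by propagating the one-parameter family of admissible Cauchy data at $x=\alpha$ into the undamped intervals and checking that the six homogeneous Dirichlet conditions at $x=0$ and $x=L$ form an overdetermined linear system whose only solution is the trivial one, forcing $c=0$.

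For surjectivity, given $F\in\HH$ I would rewrite $(i\la I-\AA)U=F$ as a boundary value system in $(v^1,v^3,v^5)$ exactly as in Proposition~\ref{p3-mdissip}. The associated sesquilinear form is a compact perturbation of the coercive form $\mathcal{B}$ (the $\la$-dependent terms absorb into compact perturbations on $\left(H_0^1(0,L)\right)^3$), so the Fredholm alternative, together with the injectivity step above, yields unique solvability. This gives $i\R\subset\rho(\AA)$, hence $\sigma(\AA)\cap i\R=\emptyset$, and Arendt-Batty applies.

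The main obstacle is clearly the injectivity step on $(\alpha,\beta)$: the dissipation supplies only a single scalar identity, and extracting $v^1\equiv 0$ requires a delicate algebraic elimination across the three spectral equations. The subtlest point is the exceptional value $\la^2=k_1(\rho_1+l^2\rho_2)/(\rho_1\rho_2)$, where one must use the transmission/regularity conditions encoded in $D(\AA)$ — in particular, the $H^2$-regularity of $v^1,v^3$ and the $L^2$ condition on $[k_3(v^5_x-lv^1)+d(x)(v^6_x-lv^2)]_x$, which enforces continuity of the trace of $k_3(v^5_x-lv^1)+d(x)(v^6_x-lv^2)$ at $x=\alpha$ and $x=\beta$ — to pin down enough Cauchy data at $\alpha$ and $\beta$ to close the overdetermined boundary argument without imposing any condition on the curvature $l$.
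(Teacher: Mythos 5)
Your overall strategy --- Arendt--Batty via $i\R\subset\rho(\AA)$, with injectivity proved through the dissipation identity and surjectivity through Lax--Milgram plus the Fredholm alternative --- is exactly the paper's, and your first steps in the injectivity argument ($v^6_x-lv^2\equiv 0$ on $(\alpha,\beta)$, hence $v^5_x-lv^1\equiv 0$, hence, by differentiating the third spectral equation and comparing with the first, $v^1\equiv 0$ and $v^5_x\equiv v^3_x\equiv 0$ on $(\alpha,\beta)$) coincide with Lemma \ref{p3-ker}. The genuine gap is in how you dispose of the constants $c=v^5|_{(\alpha,\beta)}$ and $c'=v^3|_{(\alpha,\beta)}$. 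You try to kill them with the remaining algebraic relations $(\rho_2\la^2-k_1)c'-lk_1c=0$ and $(\rho_1\la^2-l^2k_1)c-lk_1c'=0$, whose determinant vanishes precisely at $\la^2=k_1(\rho_1+l^2\rho_2)/(\rho_1\rho_2)$, and for that exceptional value you only assert that propagating the one-parameter family of Cauchy data into the undamped intervals and imposing the Dirichlet conditions yields an overdetermined linear system with trivial kernel. That assertion is not proved and is not self-evident: an overdetermined homogeneous system can perfectly well have a nontrivial solution, and whether it does here depends a priori on $\alpha$, $\beta$, $L$ and the coefficients; checking it would require an explicit computation with $e^{A_\la(x-\alpha)}$ that you do not carry out. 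As it stands, injectivity is not established at that one frequency.

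The paper avoids the case split altogether by reversing the order of the steps. Once $v^1\equiv 0$ and $v^3_x\equiv v^5_x\equiv 0$ on $(\alpha,\beta)$, the vector of derivatives $V=(v^1_x,v^1_{xx},v^3_x,v^3_{xx},v^5_x,v^5_{xx})^{\top}$ has zero Cauchy data at $x=\alpha$ and satisfies the closed first-order system $V_x=A_\la V$ on $(0,\alpha)$: the system obtained by differentiating the undamped equations involves only derivatives of $v^1,v^3,v^5$, never the functions themselves. Hence $V\equiv 0$ on $(0,\alpha)$, so $v^1,v^3,v^5$ are constant there, thus zero by the Dirichlet conditions at $x=0$, and then continuity at $\alpha$ forces $c=c'=0$ --- for every $\la\neq 0$, with no exceptional value and no restriction on $l$. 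Replacing your algebraic elimination of $(c,c')$ by this propagation argument closes the gap; the rest of your plan (the outward propagation to $(\beta,L)$ and the surjectivity step) then goes through as written.
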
\noindent According to Theorem \ref{App-Theorem-A.2}, to prove Theorem \ref{p3-strongthm2}, we need to prove that the operator $\AA$ has no pure imaginary eigenvalues and $\sigma(\AA)\cap i\R $ is countable. The proof of Theorem \ref{p3-strongthm2} has been divided into the following two Lemmas.
\begin{lem}\label{p3-ker}
	{\rm For all $\la \in \R$, $i\la I-\AA$ is injective i.e.
		$$\ker(i\la I-\AA)=\{0\},\ \  \forall \la \in \R.$$

	}
\end{lem}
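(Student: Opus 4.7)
The plan is as follows. Take $\lambda\in\mathbb{R}$ and $U=(v^1,\dots,v^6)^\top\in D(\mathcal{A})$ with $(i\lambda I-\mathcal{A})U=0$. For $\lambda=0$, injectivity is immediate from the resolvent estimate established in Proposition \ref{p3-mdissip}. The main work is the case $\lambda\neq 0$. As always with localized Kelvin–Voigt damping, the strategy is: (i) extract pointwise information on the damping region $(\alpha,\beta)$ from the dissipation identity, (ii) reduce the eigenvalue system on $(\alpha,\beta)$ to algebraic relations that force most components to vanish there, then (iii) extend by the constant-coefficient ODE uniqueness theorem across the undamped pieces $(0,\alpha)$ and $(\beta,L)$, using the regularity inherited from $D(\mathcal{A})$ and the Dirichlet boundary conditions.

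For step (i), taking the real part of $(i\lambda U-\mathcal{A}U,U)_{\mathcal{H}}=0$ and invoking \eqref{p3-Reau} gives $v^6_x-lv^2\equiv 0$ on $(\alpha,\beta)$. Combined with $v^2=i\lambda v^1$ and $v^6=i\lambda v^5$ (which are directly read off from $i\lambda U=\mathcal{A}U$), and using $\lambda\neq 0$, this yields $v^5_x=lv^1$ on $(\alpha,\beta)$. For step (ii), the damping terms in the second and sixth component equations of $\mathcal{A}U=i\lambda U$ then drop out on $(\alpha,\beta)$; the sixth equation reduces to the algebraic relation
\[
(v^1_x+v^3+lv^5)=\frac{\rho_1\lambda^2}{lk_1}\,v^5 \quad\text{on }(\alpha,\beta),
\]
while the second equation reduces to $(v^1_x+v^3+lv^5)_x=-\frac{\rho_1\lambda^2}{k_1}v^1$. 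Differentiating the first and substituting $v^5_x=lv^1$ gives $(v^1_x+v^3+lv^5)_x=\frac{\rho_1\lambda^2}{k_1}v^1$, and comparing yields $2\lambda^2 v^1=0$, hence $v^1\equiv 0$ on $(\alpha,\beta)$. Consequently $v^2=0$, $v^5\equiv c$ and $v^3\equiv\gamma$ are constants there, with $\gamma=\bigl(\tfrac{\rho_1\lambda^2}{lk_1}-l\bigr)c$; the fourth equation then forces either $c=0$ (which trivially gives $\gamma=0$), or the exceptional resonance value $\lambda^2=\tfrac{k_1}{\rho_2}+\tfrac{l^2k_1}{\rho_1}$.

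For step (iii), the $D(\mathcal{A})$-regularity ($v^1,v^3\in H^2$, $v^5\in H^1_0$ with $k_3(v^5_x-lv^1)+d(x)(v^6_x-lv^2)\in H^1$) and continuity of the flux at $x=\alpha$ provide the Cauchy data $v^1(\alpha)=0$, $v^1_x(\alpha)=0$, $v^3(\alpha)=\gamma$, $v^3_x(\alpha)=0$, $v^5(\alpha)=c$, $v^5_x(\alpha)=0$ for the constant-coefficient Bresse ODE system on $(0,\alpha)$. By ODE uniqueness, the solution on $(0,\alpha)$ is a one-parameter family in $c$; the three Dirichlet conditions $v^1(0)=v^3(0)=v^5(0)=0$ then overdetermine the system and, together with the analogous argument on $(\beta,L)$, force $c=\gamma=0$. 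This propagates $v^1=v^3=v^5\equiv 0$ throughout $(0,L)$, and hence $v^2=v^4=v^6\equiv 0$ by the first, third and fifth components of $i\lambda U=\mathcal{A}U$, proving $U=0$. The chief technical obstacle is the exceptional resonance value $\lambda^2=\tfrac{k_1}{\rho_2}+\tfrac{l^2 k_1}{\rho_1}$: here step (ii) does not by itself kill $(c,\gamma)$, and one must genuinely exploit the Dirichlet boundary data on one of the undamped intervals to close the argument — a routine but careful case distinction.
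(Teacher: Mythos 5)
Your steps (i) and (ii) reproduce the paper's computation on the damped region: the dissipation identity gives $v^6_x-lv^2=0$, hence $v^5_x-lv^1=0$, on $(\alpha,\beta)$, and playing the second and sixth equations against each other there yields $v^1\equiv 0$, $v^5\equiv c$, $v^3\equiv\gamma$ together with the two algebraic relations you record (your exceptional value $\lambda^2=\tfrac{k_1}{\rho_2}+\tfrac{l^2k_1}{\rho_1}$ is correctly computed). The gap is in step (iii), at precisely the point you defer. Having reduced the continuation to $(0,\alpha)$ to a one-parameter family of Cauchy solutions in $c$, you assert that the three Dirichlet conditions at $x=0$ ``overdetermine the system and force $c=\gamma=0$''. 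Three homogeneous linear conditions on a one-parameter family force the parameter to vanish only if at least one of the three linear functionals $c\mapsto v^i(0)$ is nonzero, and you give no argument for that: a priori the Cauchy solution emanating from the data $(0,0,\gamma,0,c,0)$ at $\alpha$ could satisfy all three conditions at $x=0$ for some $\alpha$ and some choice of the physical constants. Your closing sentence concedes that at the resonance value a ``routine but careful case distinction'' is still needed; that case distinction is exactly the content of the lemma at that frequency, and it is not supplied.

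The paper closes this uniformly in $\lambda\neq 0$, with no case distinction and without ever invoking the fourth equation to relate $c$ and $\gamma$: it passes to the derivative vector $V=(v^1_x,v^1_{xx},v^3_x,v^3_{xx},v^5_x,v^5_{xx})^{\top}$, which satisfies the first-order linear system $V_x=A_\lambda V$ obtained by differentiating \eqref{p3-f1kerp}--\eqref{p3-f3kerp} on $(0,\alpha)$, and observes that $V(\alpha)=0$ (the first derivatives vanish by your own matching data, and the second derivatives at $\alpha^-$, read off from the equations, vanish because of exactly the two consistency relations you derived on $(\alpha,\beta)$). Hence $V\equiv 0$, so $v^1,v^3,v^5$ are \emph{constant} on $(0,\alpha)$, and the Dirichlet conditions at $x=0$ give $c=\gamma=0$; the interval $(\beta,L)$ then follows from zero Cauchy data at $\beta$. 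The quickest repair of your own route is the same observation in disguise: the constant triple $(0,\gamma,c)$ already solves the second-order Cauchy problem on $(0,\alpha)$ with your data at $\alpha$ (this is where the relations $\rho_2\lambda^2\gamma=k_1(\gamma+lc)$ and $\rho_1\lambda^2c=lk_1(\gamma+lc)$ are used), so by uniqueness it \emph{is} the solution, and $v^3(0)=\gamma=0$, $v^5(0)=c=0$. With either formulation the exceptional-resonance discussion becomes unnecessary.
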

\begin{proof}
	From Proposition \ref{p3-mdissip}, we have $0\in \rho (\AA)$. We still need to show the result for $\la \in \R^{*}$. For this aim, suppose that there exists a real number $\la\neq0$ and $U=(v^1,v^2,v^3,v^4,v^5,v^6)^{\top}\in D(\AA)$ such that 
	\begin{equation}\label{p2-AU=ilaU}
	\AA U=i\la U.
	\end{equation}Equivalently, we have the following system
	\begin{eqnarray}
	v^2=i\la v^1 \label{p3-f1ker},
	\\\displaystyle k_1(v^{1}_x+v^3+lv^5 )_x+lk_3(v^{5}_x-lv^1 )+ld(x)(v^6_x-lv^2)=i\la \rho_1 v^2\label{p3-f2ker},
	\\\displaystyle v^4=i\la  v^3\label{p3-f3ker},
	\\\displaystyle k_2v^{3}_{xx}  -k_1(v^{1}_x+v^3+lv^5 )=i\la \rho_2v^4\label{p3-f4ker},
	\\\displaystyle v^6=i\la v^5 \label{p3-f5ker},\vspace{0.15cm}
	\\\displaystyle \left[k_3(v^{5}_x-lv^1 )+d(x)(v^6_x-lv^2) \right]_x-lk_1(v^{1}_x+v^3+lv^5)=i\la \rho_1  v^6
	\vspace{0.15cm}\label{p3-f6ker}.
	\end{eqnarray} 
	From  \eqref{p3-Reau}, \eqref{p2-AU=ilaU} and the definition of $d(x)$, we obtain 
	\begin{equation}\label{p2-dissi=0}
	0=\Re \left(i\la U,U\right)_\HH=\Re\left(\AA U,U\right)_{\HH}=-\intdx d(x)\left|v^6_x-lv^2\right|^2 dx =-d_0\int_{\alpha}^{\beta} \left|v^6_x-lv^2\right|^2 dx.
	\end{equation}
	Thus, we have
	\begin{equation}\label{p3-2.43}
v^6_x-lv^2 =0\ \ \text{ in}\ \ (\alpha,\beta ).
	\end{equation}
	Inserting \eqref{p3-f1ker} and \eqref{p3-f5ker} in \eqref{p3-2.43} and using the fact that $\la \neq 0$, we get 
	\begin{equation}\label{p3-2.44}
	v^5_x-lv^1=0 \ \ \text{ in}\ \ (\alpha,\beta ).
	\end{equation}
	Now, inserting \eqref{p3-2.43} and \eqref{p3-2.44} in \eqref{p3-f2ker} and \eqref{p3-f6ker}, then inserting \eqref{p3-f1ker}, \eqref{p3-f3ker} and \eqref{p3-f5ker} in \eqref{p3-f2ker}, \eqref{p3-f4ker} and \eqref{p3-f6ker} respectively, we deduce that 
	\begin{eqnarray}
	\rho_1 	\la^2 v^1+k_1(v^1_x+v^3+lv^5)_x=0 & \text{in} & (\alpha,\beta),\label{p3-3.11}\\
		\rho_2	\la^2 v^3+k_2v^3_{xx}-k_1(v^1_x+v^3+lv^5)=0 & \text{in} & (\alpha,\beta),\label{p3-3.12}\\
			\rho_1	\la^2 v^5-lk_1(v^1_x+v^3+lv^5)=0 & \text{in} & (\alpha,\beta).\label{p3-3.13}
	\end{eqnarray}
	Deriving \eqref{p3-3.13} with respect to $x$, we get
	$$
	\rho_1	\la^2 v^5_x-lk_1(v^1_x+v^3+lv^5)_x=0 \ \ \text{in} \  \ (\alpha,\beta).
	$$
	Inserting  \eqref{p3-3.11} in the above equation, we get 
	\begin{equation}\label{p3-3.14}
	\rho_1	\la^2 (v^5_x+lv^1)=0 \ \ \text{in} \ \ (\alpha,\beta ) \ \ \text{and consequently as } \ \la \neq 0, \ \ \text{we get} \ \ v^5_x +lv^1=0 \ \ \text{in} \ \ (\alpha,\beta ).
	\end{equation}
	Now, adding \eqref{p3-2.44} and \eqref{p3-3.14}, we obtain 
	\begin{equation}\label{p3-3.15}
		v^5_x =0 \  \ \text{in} \ \ (\alpha,\beta) \ \ \text{and consequently } \ \ v^1=0 \ \ \text{in} \ \ (\alpha,\beta).
	\end{equation}
	Inserting \eqref{p3-3.15} in \eqref{p3-3.11}, we get 
	\begin{equation}\label{p3-3.16}
	v^3_x =0 \ \ \text{in} \ \  (\alpha,\beta).
	\end{equation}
	Now, system \eqref{p3-f1ker}-\eqref{p3-f6ker} can be written in $(0,\alpha)\cup(\beta,L)$ as the following:
		\begin{eqnarray}
\displaystyle \rho_1 \la^2v^1+ k_1(v^{1}_x+v^3+lv^5 )_x+lk_3(v^{5}_x-lv^1 )=0 \ \ \text{in} \ \ (0,\alpha)\cup(\beta,L),\label{p3-f1kerp}
	\\\displaystyle \rho_2 \la^2 v^3+k_2v^{3}_{xx}  -k_1(v^{1}_x+v^3+lv^5 )=0\ \ \text{in} \ \ (0,\alpha)\cup(\beta,L),\label{p3-f2kerp}
\vspace{0.15cm}
	\\\displaystyle \rho_1 \la^2 v^5+ k_3(v^{5}_x-lv^1 )_x-lk_1(v^{1}_x+v^3+lv^5)=0\ \ \text{in} \ \ (0,\alpha)\cup(\beta,L).
	\vspace{0.15cm}\label{p3-f3kerp}
	\end{eqnarray} 
	Let $V=(v^1_x,v^1_{xx},v^3_x,v^3_{xx},v^5_x,v^5_{xx})^{\top}$. From \eqref{p3-3.15}, \eqref{p3-3.16} and the regularity of $v^i $, $i\in \{1,3,5\}$, we have $V(\alpha)=0$.
	Now, by deriving system \eqref{p3-f1kerp}-\eqref{p3-f3kerp} with respect to $x$ in $(0,\alpha)$, we deduce that
	 \begin{equation}\label{p3-3.20}
	 V_x =A_\la V \ \ \text{in} \ \ (0,\alpha),
	 \end{equation}
	 where
	 \begin{equation}\label{p3-A}
	 A_\la =  \begin{pmatrix}
 	 0&1&0&0&0&0\\
 	 \frac{l^2k_3-\la^2\rho_1}{k_1}&0&0&-1&0& -l(1+\frac{k_3}{k_1})\\
	 0&0&0&1&0&0\\
	 0&\frac{k_1}{k_2}&\frac{k_1-\rho_2\la^2}{k_2}&0&\frac{lk_1}{k_2}&0\\
 	 0&0&0&0&0&1\\
	 0&l(\frac{k_1}{k_3}+1)&l\frac{k_1}{k_3}&0&\frac{l^2k_1-\rho_1\la^2}{k_3}&0
	 \end{pmatrix}.
	 \end{equation}
	 The solution of the differential equation \eqref{p3-3.20} is given by
	 \begin{equation}\label{p3-solde}
	 V(x)=e^{A_\la (x-\alpha )}V (\alpha ).
	 \end{equation}
	 Thus, from \eqref{p3-solde} and the fact that $V(\alpha)=0$, we get 
	 \begin{equation}\label{p3-2.55}
	 V=0\ \ \text{in}\ \ (0,\alpha).
	 \end{equation}
	 From \eqref{p3-2.55} and the fact that $v^1(0)=v^3(0)=v^5(0)=0$, we get
	 \begin{equation}\label{p3-3.23}
	 v^1=0 \ \ \text{in} \ \ (0,\alpha), \ \ 	 v^3=0 \ \ \text{in} \ \ (0,\alpha)
\ \ \text{and} \ \ 	 v^5=0 \ \ \text{in} \ \ (0,\alpha).
	 \end{equation}
	 From \eqref{p3-3.23}, \eqref{p3-f1ker}, \eqref{p3-f3ker}, \eqref{p3-f5ker} and the fact that $\la\neq 0$, we obtain  
	 \begin{equation}\label{p3-3.24}
	 	U=0 \ \ \text{in} \ \ (0,\alpha).
	 \end{equation}
	 From \eqref{p3-3.24} and the regularity of $v^i$, $i\in \{3,5\}$, we obtain
	 $$
	 v^3(\alpha)=0 \ \ \text{and} \ \ v^5 (\alpha)=0,
	 $$
	 consequently, from \eqref{p3-3.15} and \eqref{p3-3.16}, we get 
	 \begin{equation*}
	 v^1=0 \ \ \text{in} \ \ (\alpha,\beta), \ \ 	 v^3=0 \ \ \text{in} \ \ (\alpha,\beta) \ \ \text{and} \ \ 	 v^5=0 \ \ \text{in} \ \ (\alpha,\beta),
	 \end{equation*}
	 consequently, from  \eqref{p3-f1ker}, \eqref{p3-f3ker}, \eqref{p3-f5ker} and the fact that $\la\neq 0$, we obtain  
	 \begin{equation}\label{p3-3.25}
	  U=0 \ \ \text{in} \ \ (\alpha,\beta).
	 \end{equation}
Now, let $W=(v^1,v^1_{x},v^3,v^3_{x},v^5,v^5_{x})^{\top}$. From \eqref{p3-3.25} and the regularity of $v^i $, $i\in \{1,3,5\}$, we have $W(\beta)=0$ and system \eqref{p3-f1kerp}-\eqref{p3-f3kerp} in $(\beta,L)$ implies:
	$$
	W_x=A_\la W \ \ \text{in} \ \ (\beta,L),
	$$
	where $A_\la$ is defined before  (see \eqref{p3-A}).
	Thus, we have
	\begin{equation*}
	W(x)=e^{A_\la (x-\beta )}W (\beta )=0,
	\end{equation*}
	consequently, from \eqref{p3-f1ker}, \eqref{p3-f3ker} and \eqref{p3-f5ker}, we deduce that 
\begin{equation}\label{p3-3.26}
	U=0 \ \ \text{in} \ \ (\beta,L).
\end{equation}
	Finally, from \eqref{p3-3.24}, \eqref{p3-3.25} and \eqref{p3-3.26}, we obtain
	$$
	U=0 \  \ \text{in} \ \ (0,L).
	$$
	The proof is thus complete.
\end{proof}
\begin{lem}\label{p3-surj}
	{\rm For all $\la \in \R $, we have 
	$$R(i\la I-\AA )=\HH.$$
	
}
\end{lem}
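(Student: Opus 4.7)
The case $\lambda = 0$ is already covered by Proposition \ref{p3-mdissip}, so the plan is to treat $\lambda \in \R^*$. Given $F = (f^1,\dots,f^6) \in \HH$, I would first use the odd-indexed components of $(i\lambda I - \AA)U = F$ to eliminate $v^2 = i\lambda v^1 - f^1$, $v^4 = i\lambda v^3 - f^3$, $v^6 = i\lambda v^5 - f^5$, and substitute into the three remaining equations. The damping contribution $d(x)(v^6_x - lv^2)$ then becomes $i\lambda d(x)(v^5_x - lv^1) - d(x)(f^5_x - lf^1)$. Multiplying by test functions $(\phi^1,\phi^2,\phi^3) \in (H^1_0(0,L))^3$ and integrating by parts produces a variational problem
\[
a(V,\Phi) = L(\Phi),\qquad \forall\, \Phi \in (H^1_0(0,L))^3,
\]
for $V = (v^1,v^3,v^5)$, where $L$ is continuous on $(H^1_0)^3$ and
\[
a(V,\Phi) = \mathcal{B}(V,\Phi) + i\lambda \int_\alpha^\beta d_0\, (v^5_x - lv^1)\,\overline{(\phi^3_x - l\phi^1)}\, dx - \lambda^2 \int_0^L \bigl(\rho_1 v^1\overline{\phi^1} + \rho_2 v^3 \overline{\phi^2} + \rho_1 v^5 \overline{\phi^3}\bigr)\, dx,
\]
with $\mathcal{B}$ the form from the proof of Proposition \ref{p3-mdissip}.

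The key structural observation I would exploit is that the damping term in $a$, although of principal order, becomes purely imaginary when tested against $\Phi = V$: it equals $i\lambda \int_\alpha^\beta d_0 |v^5_x - lv^1|^2\, dx$. Writing $a = a_0 + a_c$ with $a_0$ the first two summands and $a_c$ the $\lambda^2$ correction, one obtains $\Re a_0(V,V) = \mathcal{B}(V,V) \geq c\|V\|^2_{(H^1_0)^3}$ by Remark \ref{p3-rknorm}, so $a_0$ is coercive and continuous on $(H^1_0(0,L))^3$ and, by Lax-Milgram, induces an isomorphism $T_0 : (H^1_0)^3 \to ((H^1_0)^3)^*$.

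Next, the form $a_c$ is continuous on $(L^2(0,L))^3$, hence by the Rellich-Kondrachov compact embedding $H^1_0(0,L) \hookrightarrow L^2(0,L)$ it defines a compact operator $K : (H^1_0)^3 \to ((H^1_0)^3)^*$. Therefore $T := T_0 + K$ is Fredholm of index zero, and surjectivity of $T$ reduces to its injectivity. Any $V$ in $\ker T$ produces, via the substitutions above and a short regularity bootstrap, an element of $D(\AA) \cap \ker(i\lambda I - \AA)$, which is trivial by Lemma \ref{p3-ker}. Thus $T$ is an isomorphism, the variational problem has a unique solution $V \in (H^1_0)^3$, and a routine elliptic regularity argument shows $v^1, v^3 \in H^2(0,L) \cap H^1_0(0,L)$ and $[k_3 v^5_x + d(x)(v^6_x - lv^2)]_x \in L^2(0,L)$; hence $U := (v^1, i\lambda v^1 - f^1, v^3, i\lambda v^3 - f^3, v^5, i\lambda v^5 - f^5)$ belongs to $D(\AA)$ and solves $(i\lambda I - \AA)U = F$.

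The main obstacle is the treatment of the damping term $i\lambda d(x)(v^5_x - lv^1)$: because $d$ is merely $L^\infty$ and enters at principal order in the reduced second-order system, it cannot be absorbed as a lower-order compact perturbation. The resolution is the algebraic cancellation noted above, which forces its contribution to $a_0(V,V)$ to be purely imaginary and thereby preserves the coercivity inherited from $\mathcal{B}$ via Remark \ref{p3-rknorm}.
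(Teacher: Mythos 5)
Your proposal is correct and follows essentially the same route as the paper: after eliminating the even-indexed components, the paper also splits the reduced problem into a principal operator $\mathbb{L}$ (your $T_0$, made coercive by the same observation that the $i\la d(x)(v^5_x-lv^1)$ term is purely imaginary on the diagonal, so Remark \ref{p3-rknorm} gives $\Re\langle\mathbb{L}\mathsf{U},\mathsf{U}\rangle\geq c\|\mathsf{U}\|^2$) plus a compact zeroth-order perturbation, writes the equation as $(I-\la^2\mathbb{L}^{-1})\mathsf{U}=\mathbb{L}^{-1}\mathsf{G}$, and concludes by the Fredholm alternative with injectivity supplied by Lemma \ref{p3-ker}. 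The two arguments differ only in presentation ($T_0+K$ versus $I-\la^2\mathbb{L}^{-1}$), not in substance.
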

\begin{proof}
		From Proposition \ref{p3-mdissip}, we have $0\in\rho(\AA)$. We still need to show the result for $\la \in \R^{*}$. For this aim, let $F=(f^1,f^2,f^3,f^4,f^5 ,f^6)^{\top}\in \HH$, we want to find $U=(v^1,v^2,v^3,v^4,v^5,v^6)^{\top}\in D(\AA)$ solution of \begin{equation}\label{p3-ilaU-AU=F}
	(	i\la I -\AA)U=F.
	\end{equation}
	Detailing \eqref{p3-ilaU-AU=F}, we obtain
\begin{eqnarray}
i\la  v^1-v^2&=&f^1,\label{p3-f1surji}\\
i\la v^2-\frac{k_1}{\rho_1}\left(v^1_x+v^3+lv^5 \right)_x -\frac{lk_3}{\rho_1}(v^5_x-lv^1)-\frac{ld(x)}{\rho_1}(v^6_x-lv^2)&=& f^2,\label{p3-f2surj}\\
i\la v^3-v^4&=&f^3,\label{p3-f3surj}\\
i\la  v^4-\frac{k_2}{\rho_2}v^{3}_{xx} +\frac{k_1}{\rho_2}(v^1_x+v^3+lv^5)&=& f^4,\label{p3-f4surj}\\
i\la v^5-v^6&=&f^5,\label{p3-f5surj}\\
i\la  v^6-\frac{1}{\rho_1}\left[k_3\left(v^5_x-lv^1\right)+d(x)(v^6_x -lv^2)\right]_x +\frac{lk_1}{\rho_1}(v^1_x+v^3+lv^5)&=& f^6,\label{p3-f6surj}
\end{eqnarray}
with the following boundary conditions
\begin{equation}\label{p3-bcsurj}
	v^1(0)=v^1(L)=v^3(0)=v^3(L)=v^5(0)=v^5(L)=0.
\end{equation}
Inserting $v^2=i\la v^1-f^1$, $v^4=i\la v^3-f^3$ and $v^6=i\la v^5-f^5$ in \eqref{p3-f2surj}, \eqref{p3-f4surj} and \eqref{p3-f6surj} respectively, we obtain
\begin{eqnarray}
-\la^2   v^1-\frac{k_1}{\rho_1}\left(v^1_x+v^3+lv^5 \right)_x -\frac{lk_3}{\rho_1}(v^5_x-lv^1)-\frac{i\la ld(x)}{\rho_1}( v^5_x- lv^1)&=&g^1,\label{p3-f7surj}\\
-\la^2   v^3-\frac{k_2}{\rho_2}v^{3}_{xx} +\frac{k_1}{\rho_2}(v^1_x+v^3+lv^5)&=&g^2,\label{p3-f8surj}\\
-\la^2  v^5-\frac{1}{\rho_1}\left[k_3(v^5_x-lv^1)+i\la d(x)(v^5_x-lv^1) \right]_x +\frac{lk_1}{\rho_1}(v^1_x+v^3+lv^5)&=&g^3,\label{p3-f9surj}
\end{eqnarray}
where
\begin{equation}
\left\{\begin{array}{lll}
\displaystyle	g^1 :=i\la  f^1 +  f^2+\frac{ld(x)}{\rho_1}(-f^5_x+lf^1)\in  H^{-1}(0,L), \ \ g^2:= i\la   f^3+ f^4 \in  H^{-1}(0,L), \vspace{0.25cm}\\ \displaystyle 
	 g^3:= i\la  f^5+ f^6+\rho_1^{-1}\left[d(x)(-f^5_x+lf^1)\right]_x\in H^{-1}(0,L).
	 \end{array}
	 \right.
\end{equation}
For all $\mathsf{U}=(v^1,v^3,v^5)^{\top}\in \mathbb{H}:=\left(H^1_0 (0,L)\right)^3$, we define the  linear operator $\mathbb{L}:\mathbb{H} \longmapsto \mathbb{H}^{\prime}:=\left(H^{-1}(0,L)\right)^3$ by:
\begin{equation}\label{p3-3.40}
	\mathbb{L}\mathsf{U}=\begin{pmatrix}
\displaystyle	-\frac{k_1}{\rho_1 }\left(v^1_x+v^3+lv^5 \right)_x -\frac{lk_3}{\rho_1}(v^5_x-lv^1)-\frac{i\la ld(x)}{\rho_1}( v^5_x- lv^1)\vspace{0.15cm}\\
\displaystyle	-\frac{k_2}{\rho_2}v^{3}_{xx} +\frac{k_1}{\rho_2}(v^1_x+v^3+lv^5)\vspace{0.15cm}\\
\displaystyle	-\frac{1}{\rho_1}\left[k_3(v^5_x-lv^1)+i\la d(x)(v^5_x-lv^1) \right]_x +\frac{lk_1}{\rho_1}(v^1_x+v^3+lv^5)
	\end{pmatrix}.
\end{equation}
Let us prove that the operator $\mathbb{L}$ is an isomorphism. For this aim, take the duality bracket $\langle \cdot,\cdot \rangle_{\mathbb{H}^\prime,\mathbb{H}} $ of \eqref{p3-3.40} with $\Psi:= (\rho_1 \psi^1,\rho_2 \psi^2,\rho_1 \psi^3)^\top \in \mathbb{H}$, we obtain
\begin{equation*}
\begin{array}{lll}
\displaystyle \langle \mathbb{L}\mathsf{U},\Psi \rangle_{\mathbb{H}^\prime,\mathbb{H}} = \left \langle -k_1\left(v^1_x+v^3+lv^5 \right)_x -lk_3(v^5_x-lv^1)-i\la ld(x)( v^5_x- lv^1),\psi^1\right\rangle_{H^{-1}(0,L),H^1_0(0,L)}\vspace{0.25cm}\\
\displaystyle  +\, \left \langle -k_2v^{3}_{xx} +k_1(v^1_x+v^3+lv^5),\psi^2\right\rangle_{H^{-1}(0,L),H^1_0(0,L)}\vspace{0.25cm}\\
\displaystyle  +\, \left \langle -\left[k_3(v^5_x-lv^1)+i\la d(x)(v^5_x-lv^1) \right]_x +lk_1(v^1_x+v^3+lv^5),\psi^3\right\rangle_{H^{-1}(0,L),H^1_0(0,L)}.
\end{array}
\end{equation*}
Consequently, we obtain
$$
\begin{array}{lll}
&&\displaystyle 	\langle \mathbb{L}\mathsf{U},\Psi \rangle_{\mathbb{H}^\prime,\mathbb{H}}=\displaystyle k_1\intdx (v^1_x+v^3 +lv^5)\overline{\psi^1_x }dx -lk_3\intdx (v^5_x -lv^1)\overline{\psi^1}dx-i\la l\intdx d(x)(v^5_x-lv^1)\overline{\psi^1}dx  \vspace{0.25cm}\\
&&\displaystyle   +\, k_2\intdx v^3_x \overline{\psi^2_x}dx+k_1\int_{0}^L (v^1_x +v^3 +lv^5)\overline{\psi^2}dx 
+k_3\intdx (v^5_x -lv^1)\overline{\psi^3_x}dx +i\la \intdx d(x)(v^5_x-lv^1)\overline{\psi^3_x }dx\vspace{0.25cm}\\
&&\displaystyle +\, lk_1\intdx (v^1_x+v^3 +lv^5)\overline{\psi^3}dx,
\end{array}
$$
defines a continuous sesquilinear form which is coercive on $\mathbb{H}$. Indeed, from Remark \ref{p3-rknorm}, we deduce that there exists a positive constant $c^\prime$ such that 

\begin{equation*}
\begin{array}{lll}
\Re\
\displaystyle 	\langle \mathbb{L}\mathsf{U},\mathsf{U} \rangle_{\mathbb{H}^\prime,\mathbb{H}}=k_1  \|v^1_x+v^3+lv^5\|^2 +k_2 \|v^3_x\|^2 +k_3  \|v^5_x-lv^1\|^2 \vspace{0.25cm}\\
\hspace{2.5cm}\geq  c^\prime \left(\|v^1_x\|^2+\|v^3_x\|^2+\|v^5_x\|^2 \right)\vspace{0.25cm}\\
\hspace{2.5cm}	=c^\prime \left\|\left(v^1,v^3,v^5\right)\right\|^2_{\mathbb{H}}\vspace{0.25cm}\\ \hspace{2.5cm}= c^\prime \|\mathsf{U}\|^2_{\mathbb{H}}.
\end{array}
\end{equation*}
Therefore, by using Lax-Milgram theorem, we deduce that  $\mathbb{L}$ is an isomorphism from $\mathbb{H}$ onto $\mathbb{H}^\prime$.
\\\linebreak
Now, let $\mathsf{U}=(v^1,v^3,v^5)^{\top}$ and $\mathsf{G}=(g^1,g^2,g^3)^{\top}$, then system \eqref{p3-f7surj}-\eqref{p3-f9surj} can be transformed into the following form: 
\begin{equation}\label{p3-tr}
(I-\la^2 \mathbb{L}^{-1}) \mathsf{U}=\mathbb{L}^{-1}\mathsf{G}.	
\end{equation}
Since $I$ is compact operator from $\mathbb{H}$ onto $\mathbb{H}^\prime$ and $\mathbb{L}^{-1}$ is an isomorphism from $\mathbb{H}^\prime$ onto $\mathbb{H}$, the operator $I-\la^2 \mathbb{L}^{-1}$ is Fredholm of index zero.
Then, by Fredholm's alternative, \eqref{p3-tr} admits a unique solution $\mathsf{U} \in \mathbb{H}$ if and only if $I-\la^2 \mathbb{L}^{-1}$ is injective. Let $\mathsf{V}=(\mathsf{v^1},\mathsf{v^3},\mathsf{v^5})^{\top} \in \mathbb{H} $ such that 
\begin{equation}
	\mathsf{V}-\la^2 \mathbb{L}^{-1}\mathsf{V}=0 \iff \la^2 \mathsf{V}-\mathbb{L}\mathsf{V}=0.
\end{equation}
Equivalently, we have 
\begin{eqnarray}
-\la^2  \ve^1-\frac{k_1}{\rho_1}\left(\ve^1_x+\ve^3+l\ve^5 \right)_x -\frac{lk_3}{\rho_1}(\ve^5_x-l\ve^1)-\frac{i\la ld(x)}{\rho_1}( \ve^5_x- l\ve^1)&=&0,\label{p3-f7surj*}\\
-\la^2  \ve^3-\frac{k_2}{\rho_2}\ve^{3}_{xx} +\frac{k_1}{\rho_2}(\ve^1_x+\ve^3+l\ve^5)&=&0,\label{p3-f8surj*}\\
-\la^2   \ve^5-\frac{1}{\rho_1}\left[(k_3+i\la d(x))\ve^5_x -l(k_3+i\la) \ve^1 \right]_x +\frac{lk_1}{\rho_1}(\ve^1_x+\ve^3+l\ve^5)&=&0.\label{p3-f9surj*}
\end{eqnarray}
It is easy to see that if $\mathsf{V}=(\ve^1,\ve^2,\ve^3)^{\top}$ is a solution of \eqref{p3-f7surj*}-\eqref{p3-f9surj*}, then the vector $\mathsf{W}$ defined by 
$$
\mathsf{W}=(\ve^1,i\la \ve^1,\ve^3,i\la \ve^3, \ve^5,i\la \ve^5)^{\top} 
$$
belongs to $D(\AA)$ and satisfies
$$
i\la \mathsf{W}-\AA \mathsf{W}=0.
$$
Thus, by using Lemma \ref{p3-ker}, we obtain $\mathsf{W}=0$ and consequently  $I-\la^2 \mathbb{L}^{-1}$ is injective. Thanks to Fredholm's alternative, \eqref{p3-tr} admits a unique solution $\mathsf{U} \in\mathbb{H}$ and 
$$
\begin{array}{lll}
v^1, v^3 \in H^2 (0,L), \ \
\left[k_3v^5_x+ d(x)(i\la v^5_x-f^5_x-l(i\la v^1-f^1)) \right]_x \in L^2 (0,L).
\end{array}
$$
\noindent Finally, by setting $v^2=i\la v^1-f^1$, $v^4=i\la v^3-f^3$ and $v^6=i\la v^5-f^5$, we deduce that $U\in D(\AA)$ is a unique solution of \eqref{p3-ilaU-AU=F}. The proof is thus complete
\end{proof}
\\\linebreak
\textbf{Proof of Theorem \ref{p3-strongthm2}.} From Lemma \ref{p3-ker}, we obtain the the operator $\AA $ has no pure imaginary eigenvalues (i.e. $\sigma_p (\AA)\cap i\R=\emptyset$). Moreover, from Lemma \ref{p3-surj} and with the help of the closed graph theorem of Banach, we deduce that $\sigma(\AA )\cap i\R=\emptyset$. Therefore, according to Theorem \ref{App-Theorem-A.2}, we get that the C$_0 $-semigroup $(e^{t\AA})_{t\geq0}$ is strongly stable. The proof is thus complete. \xqed{$\square$}

\section{Polynomial Stability }\label{p3-secpoly}
\noindent In this section, we will prove the polynomial stability of  system \eqref{p3-sysorig}-\eqref{p3-initialcond} with different rates. The main results of this section are the following theorems. 
\begin{theoreme}\label{p3-pol-eq}{\rm
		 If $$\displaystyle \frac{k_1}{\rho_1}=\frac{k_2}{\rho_2},$$ then there exists $C>0$ such that for every $U_{0}\in D(\AA)$, we have 
		$$
		E(t)\leq \frac{C}{t}\|U_0\|^2_{D(\AA)},\quad t>0.
		$$}
\end{theoreme}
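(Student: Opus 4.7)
The plan is to invoke the Borichev--Tomilov characterization of polynomial stability: since Theorem \ref{p3-strongthm2} (via Lemmas \ref{p3-ker} and \ref{p3-surj}) already yields $i\R\subset\rho(\AA)$, the bound $E(t)\le C t^{-1}\|U_0\|^2_{D(\AA)}$ for $U_0\in D(\AA)$ is equivalent to the frequency-domain resolvent estimate
$$\limsup_{|\la|\to\infty}\frac{1}{|\la|^{2}}\|(i\la I-\AA)^{-1}\|_{\LL(\HH)}<\infty.$$
I would prove this by contradiction in the usual way: assume there exist sequences $\la_n\in\R$ with $|\la_n|\to\infty$ and $U_n=(v^1_n,v^2_n,v^3_n,v^4_n,v^5_n,v^6_n)^\top\in D(\AA)$ with $\|U_n\|_\HH=1$, while $F_n:=(i\la_n I-\AA)U_n$ satisfies $\la_n^{2}\|F_n\|_\HH\to 0$; the aim is to force $\|U_n\|_\HH\to 0$.

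The starting point is the dissipation identity. Taking the real part of $(F_n,U_n)_\HH$ and using \eqref{p3-Reau} gives
$$d_0\intdnd |v^6_{n,x}-lv^2_n|^{2}\,dx=\Re(F_n,U_n)_\HH\le \|F_n\|_\HH\,\|U_n\|_\HH=o(\la_n^{-2}),$$
so we already control $v^6_{n,x}-lv^2_n$ in $L^{2}(\alpha,\beta)$ at rate $o(\la_n^{-1})$. Eliminating $v^2_n, v^4_n, v^6_n$ via $i\la_n v^j_n-v^{j+1}_n=f^j_n$ for $j\in\{1,3,5\}$, the resolvent system reduces to the three coupled second-order equations \eqref{p3-f7surj}--\eqref{p3-f9surj} for $v^1_n,v^3_n,v^5_n$, now with $o(\la_n^{-2})$ right-hand sides.

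Next I would run a localized multiplier argument on the damped interval. Choose a cut-off $\chi\in C_c^\infty(\alpha,\beta)$ equal to $1$ on a sub-interval $(\alpha',\beta')\Subset(\alpha,\beta)$. Multiplying the third reduced equation by $\chi\,\overline{(v^5_{n,x}-lv^1_n)}$ and the first by a comparable quantity, integrating by parts and feeding in the already-known smallness of $v^6_{n,x}-lv^2_n$, one upgrades the dissipation into $L^{2}$-control of $\la_n v^5_n$ and $\la_n v^1_n$ together with $v^5_{n,x}-lv^1_n$ on $(\alpha',\beta')$. The hypothesis $\tfrac{k_1}{\rho_1}=\tfrac{k_2}{\rho_2}$ is invoked precisely here: it makes the Timoshenko-type subsystem in $(v^1_n,v^3_n)$ non-resonant in the standard way (as exploited e.g. in \cite{BASSAM20151177}), so that testing the second reduced equation against $\chi\,\overline{v^3_{n,x}}$ and using the equality of speeds cancels the otherwise-problematic $\la_n^{2}\rho_2 v^3_n-k_1(v^1_{n,x}+v^3_n+lv^5_n)$ mismatch and yields $L^{2}$-control of $v^3_{n,x}$ on $(\alpha',\beta')$ with the optimal $O(|\la_n|^{-1})$ rate. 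This is what produces $t^{-1}$ rather than $t^{-1/2}$.

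Once the six components have been shown to be $o(1)$ in the energy norm on $(\alpha',\beta')$, the final step is propagation to $(0,\alpha)$ and $(\beta,L)$. Here I would use exactly the ODE formulation $V_x=A_\la V$ from \eqref{p3-3.20}--\eqref{p3-A}, combined with the Dirichlet conditions $v^i_n(0)=v^i_n(L)=0$ and classical boundary-type multipliers ($(x-\alpha)$ on $(0,\alpha)$, $(L-x)$ on $(\beta,L)$) applied to the three reduced equations, to express the $L^{2}$-norms of $v^i_n, v^i_{n,x}$ on the undamped intervals in terms of their traces at $\alpha$ and $\beta$ plus an $o(1)$ source contribution. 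The main obstacle I expect is exactly this propagation: careful tracking of the powers of $\la_n$ in each multiplier identity, and in particular controlling the transmission across the discontinuity points $x=\alpha,\beta$ where $d$ jumps, has to be done so that no term blows up faster than $O(|\la_n|^{2})$. Summing the interior and exterior $L^{2}$-contributions will then contradict $\|U_n\|_\HH=1$ and complete the proof.
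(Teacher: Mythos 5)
Your overall architecture --- the Borichev--Tomilov reduction to the resolvent bound $O(|\la|^{2})$, the contradiction setup with $\|U_n\|_{\HH}=1$, the dissipation identity giving $\|v^6_{n,x}-lv^2_n\|_{L^2(\alpha,\beta)}=o(|\la_n|^{-1})$, the interior cut-off multipliers upgrading this to control of $\la_n v^5_n$, $\la_n v^1_n$, $v^1_{n,x}$, and the use of $\frac{k_1}{\rho_1}=\frac{k_2}{\rho_2}$ to cancel the $v^1_{n,xx}\overline{v^3_{n,x}}$ cross terms and recover $v^3_{n,x}$ and $\la_n v^3_n$ --- matches the paper's Lemmas \ref{p3-1stlemps}--\ref{p3-3rdlem}. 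The genuine gap is in your final propagation step to the undamped intervals $(0,\alpha)\cup(\beta,L)$. First, the representation $V(x)=e^{A_\la(x-\alpha)}V(\alpha)$ from \eqref{p3-3.20}--\eqref{p3-A} is only usable in the strong-stability proof, where $V(\alpha)=0$ exactly: the matrix $A_\la$ has entries of order $\la^{2}$, so $\|e^{A_\la(x-\alpha)}\|$ can grow like $e^{c|\la|}$, and propagating data that is merely $o(1)$ through this exponential yields nothing quantitative. Second, your boundary multipliers do not close. With the weight $(x-\alpha)$ on $(0,\alpha)$, integration by parts kills the trace at $\alpha$ but leaves $+\alpha k_1|v^1_{n,x}(0)|^{2}$ on the wrong side of the identity, so you obtain $\int_0^{\alpha}(\rho_1|\la_n v^1_n|^{2}+k_1|v^1_{n,x}|^{2})dx=\alpha k_1|v^1_{n,x}(0)|^{2}+\cdots$, with no control of the trace at $0$; flipping to the weight $x$ instead produces the interface traces $|v^i_n(\alpha)|$, $|v^i_{n,x}(\alpha)|$, which the interior $L^{2}$ estimates on $(\alpha+j\eps,\beta-j\eps)$ do not reach.

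The paper avoids both problems with a single global multiplier identity (Lemma \ref{p3-6thlemma} and the lemma that follows it): it takes $\h=x\,\q_1+(x-L)\,\q_2$ with $\h(0)=\h(L)=0$, so that no boundary term whatsoever appears, $\h'\equiv 1$ on $(0,\gamma_1)\cup(\gamma_2,L)$, and the entire transition of $\h'$ is supported in $(\gamma_1,\gamma_2)\subset(\alpha,\beta)$. Every error term is then an $L^{2}$ integral over the damped region, where the interior lemmas already give $o(1)$; no trace at $0$, $L$, $\alpha$ or $\beta$ is ever evaluated, and the ``transmission across the discontinuity of $d$'' that you identify as the main obstacle simply never arises. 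To make your argument close, you would need to replace your propagation step by an identity of this type, or else prove a genuine trace estimate at $\alpha$ and $\beta$ with the correct powers of $\la_n$ --- which your current estimates do not provide.
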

\begin{theoreme}\label{p3-pol-neq}{\rm
		If $$\displaystyle \frac{k_1}{\rho_1}\neq \frac{k_2}{\rho_2},$$ then there exists $C>0$ such that for every $U_{0}\in D(\AA)$, we have 
		$$
		E(t)\leq \frac{C}{\sqrt{t}}\|U_0\|^2_{D(\AA)},\quad t>0.
		$$}
\end{theoreme}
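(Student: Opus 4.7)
The plan is to apply the Borichev--Tomilov frequency-domain characterization of polynomial stability. Since Theorem~\ref{p3-strongthm2} (together with Lemmas \ref{p3-ker}--\ref{p3-surj}) provides $i\R\subset\rho(\AA)$ for the contraction semigroup generated by $\AA$, the bound $E(t)\le C t^{-1/2}\|U_0\|^2_{D(\AA)}$ for $U_0\in D(\AA)$ is equivalent to the resolvent growth
\begin{equation*}
\limsup_{|\la|\to\infty}\frac{1}{|\la|^{4}}\,\|(i\la I-\AA)^{-1}\|_{\LL(\HH)}<+\infty.
\end{equation*}
I would argue by contradiction: assume there exist $\la_n\to+\infty$ and $U_n=(v^1_n,\dots,v^6_n)\in D(\AA)$ with $\|U_n\|_{\HH}=1$ and $\la_n^{4}F_n\to 0$ in $\HH$, where $F_n=(f^1_n,\dots,f^6_n):=(i\la_n I-\AA)U_n$, and derive a contradiction.

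The dissipation identity \eqref{p3-Reau} immediately yields
\begin{equation*}
\|v^6_{n,x}-lv^2_n\|_{L^2(\alpha,\beta)}=o(\la_n^{-2}),
\end{equation*}
and then substituting $v^2_n=i\la_n v^1_n-f^1_n$, $v^6_n=i\la_n v^5_n-f^5_n$ (absorbing the $o(\la_n^{-4})$ contributions coming from $F_n$) produces the derived local bound $\|v^5_{n,x}-lv^1_n\|_{L^2(\alpha,\beta)}=o(\la_n^{-3})$. Analogous substitutions reduce the six resolvent equations to a closed second-order system for $(v^1_n,v^3_n,v^5_n)$ on $(0,L)$, with source terms of order $o(\la_n^{-4})$ in $L^2$.

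The heart of the proof is to propagate this single localized bound into a global estimate on $\|U_n\|_{\HH}$. I would carry this out in two stages. \emph{Localization.} On a subinterval $(\alpha',\beta')\Subset(\alpha,\beta)$, testing the three reduced equations against smooth cut-offs multiplied by $\overline{v^i_n}$ and $\overline{v^i_{n,x}}$ and integrating by parts, the smallness of $v^5_{n,x}-lv^1_n$ combined with the boundedness of $\|U_n\|_{\HH}$ forces $\|v^i_n\|_{H^1(\alpha',\beta')}=o(1)$ and $\la_n\|v^i_n\|_{L^2(\alpha',\beta')}=o(1)$ for $i\in\{1,3,5\}$. \emph{Transport.} Using a piecewise-affine multiplier $h(x)$ that is equal to $x$ on $[0,\alpha']$, to $x-L$ on $[\beta',L]$, and interpolated smoothly in between, together with classical multiplier identities for Bresse-type systems, I would carry these local bounds across the interfaces $\{\alpha,\beta\}$ to obtain estimates on $(0,\alpha)\cup(\beta,L)$ and eventually $\|U_n\|_{\HH}=o(1)$, contradicting $\|U_n\|_{\HH}=1$.

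The main obstacle is the transport stage. Because only the axial force is damped and the vertical/shear-angle equations couple to the longitudinal equation only through zero-order terms, the dissipation must be transferred from $v^5_n$ to $v^1_n$ and $v^3_n$ through \emph{two} successive applications of a multiplier; each such transfer introduces a factor $\la_n^{2}$ in the resolvent estimate, which is what produces the $|\la|^{4}$ growth rate and hence the $t^{-1/2}$ decay. In the equal-wave-speed regime $k_1/\rho_1=k_2/\rho_2$ of Theorem~\ref{p3-pol-eq}, a Timoshenko-type change of variable decouples the $v^1$--$v^3$ subsystem from $v^5$ at leading order, collapsing the two transfers into one and saving a factor $\la_n^{2}$, which is precisely what upgrades the rate to $t^{-1}$.
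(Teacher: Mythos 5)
Your proposal follows essentially the same route as the paper: Borichev--Tomilov with $\ell=4$, a contradiction argument on a normalized sequence, the dissipation estimate $\|v^6_x-lv^2\|_{L^2(\alpha,\beta)}=o(\la^{-2})$ and its consequence $\|v^5_x-lv^1\|_{L^2(\alpha,\beta)}=o(\la^{-3})$, localization by cut-off multipliers on shrinking subintervals of $(\alpha,\beta)$, and transport to the boundary via the piecewise-affine multiplier $\h=x\q_1+(x-L)\q_2$ vanishing at $0$ and $L$. This is exactly the scheme of the paper's Lemmas \ref{p3-1stlemps}--\ref{p3-6thlemma} and the final lemma, and your diagnosis of where the $|\la|^4$ growth comes from (the dissipation reaching $v^3$ only after passing through $v^1$) matches the paper's structure.

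One caveat worth naming: the localization stage as you state it, namely $\|v^i\|_{H^1(\alpha',\beta')}=o(1)$ and $\la\|v^i\|_{L^2(\alpha',\beta')}=o(1)$ for $i\in\{1,3,5\}$, is not strong enough to close the argument for $i=3$ in the unequal-wave-speed case. Since the shear-angle equation carries no damping and couples to the rest only through $k_1(v^1_x+v^3+lv^5)$, the estimate on $v^3_x$ is obtained by testing the $v^1$-equation against $\f_4\overline{v^3_x}$ and the $v^3$-equation against $\f_4\overline{v^1_x}$ (cross multipliers, not diagonal ones), and the resulting term $\la^2\int \f_4\, v^1\,\overline{v^3_x}\,dx$ forces the strictly stronger intermediate bound $\int|\la v^1|^2\,dx=o(\la^{-2})$ on a subinterval. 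This is the paper's Lemma \ref{p3-4thlemma}, a five-step argument that is the technical heart of the unequal-speed case and the place where the full strength of $\ell=4$ is actually spent; it is what your ``two successive transfers each costing $\la^2$'' heuristic corresponds to, but with only the $o(1)$ bounds you list the $v^3$ estimates, and hence the final contradiction, would remain out of reach. (A small additional imprecision: after eliminating $v^2,v^4,v^6$ the source terms of the reduced system are $o(\la^{-3})$, not $o(\la^{-4})$, because of the $i\la f^1$-type contributions; this is harmless for the argument.)
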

\noindent Since $i\R\subset \rho(\AA)$ (see Section \ref{p3-sec3}), according to  Theorem \ref{bt}, to prove  Theorem \ref{p3-pol-eq} and Theorem \ref{p3-pol-neq}, we still need to prove the following condition  
\begin{equation}\tag{${\rm H}$}\label{p3-H-cond}
\sup_{\la\in \R}\left\|\left(i\la I-\AA\right)^{-1}\right\|_{\mathcal{L}(\HH)}=O\left(\abs{\la}^{\ell}\right), \quad \text{with} \quad \ell=2 \ \ \text{or}\ \ \ell=4.
\end{equation}
We will prove condition \eqref{p3-H-cond} by a contradiction argument. For this purpose,
suppose that \eqref{p3-H-cond} is false, then there exists $\left\{(\la^n,U^n:=(v^{1,n},v^{2,n},v^{3,n},v^{4,n},v^{5,n},v^{6,n})^{\top})\right\}_{n\geq1}\subset \R^{\ast} \times D(\mathcal{A})$ with
\begin{equation}\label{p3-contra-pol2}
|\la^n|\to\infty \quad \text{and}\quad \|U^n\|_{\mathcal{H}}=\|(v^{1,n},v^{2,n},v^{3,n},v^{4,n},v^{5,n},v^{6,n})^{\top}\|_{\HH}=1,
\end{equation}
such that  
\begin{equation}\label{p3-eq0ps}
(\la^n )^{\ell} (i\la^n I-\AA )U^n =F^n:=(f^{1,n},f^{2,n},f^{3,n},f^{4,n},f^{5,n},f^{6,n})^{\top}  \to 0  \quad \text{in}\quad \HH.
\end{equation} 
For simplicity, we drop the index $n$. Equivalently, from \eqref{p3-eq0ps}, we have

\begin{eqnarray}
	i\la v^1 -v^2 &=&\la^{-\ell}f^1,\label{p3-f1ps}\\
	i\la \rho_1 v^2 -k_1 (v^1_x +v^3+lv^5)_x -lk_3 (v^5_x-lv^1)-ld(x)(v^6_x-lv^2)&=&\rho_1 \la^{-\ell}f^2,\label{p3-f2ps}\\
	i\la v^3-v^4&=&\la^{-\ell}f^3,\label{p3-f3ps}\\
	i\la \rho_2 v^4 -k_2 v^3_{xx}+k_1 (v^1_x+v^3+lv^5)&=&\rho_2 \la^{-\ell}f^4,\label{p3-f4ps}\\
	i\la v^5-v^6&=&\la^{-\ell}f^5,\label{p3-f5ps}\\
	i\la \rho_1 v^6 -\left[k_3 (v^5_x-lv^1)+d(x)(v^6_x-lv^2)\right]_x +lk_1 (v^1_x+v^3+lv^5)&=&\rho_1 \la^{-\ell}f^6. \label{p3-f6ps}
\end{eqnarray}
By inserting \eqref{p3-f1ps} in \eqref{p3-f2ps}, \eqref{p3-f3ps} in \eqref{p3-f4ps} and \eqref{p3-f5ps} in \eqref{p3-f6ps}, we deduce that
\begin{eqnarray}
\la^2 \rho_1 v^1 +k_1 (v^1_x +v^3+lv^5)_x +lk_3 (v^5_x-lv^1)+ld(x)(v^6_x-lv^2)&=&-\rho_1 \la^{-\ell}f^2-i\rho_1 \la^{-\ell+1}f^1,\label{p3-4.18*}\\
\la^2 \rho_2 v^3 +k_2 v^3_{xx} -k_1(v^1_x+v^3+lv^5)&=&-\rho_2 \la^{-\ell}f^4-i\rho_2 \la^{-\ell+1}f^3,\label{p3-4.18**}\\
\la^2\rho_1 v^5 +\left[k_3 (v^5_x-lv^1)+d(x)(v^6_x-lv^2)\right]_x -lk_1 (v^1_x+v^3+lv^5)&=&-\rho_1 \la^{-\ell}f^6 -i\rho_1\la^{-\ell+1}  f^5.\label{p3-4.44*}
\end{eqnarray}
Here we will check the condition \eqref{p3-H-cond} by finding a contradiction with \eqref{p3-contra-pol2} by showing $\left\|U\right\|_{\HH}=o(1)$.  For clarity, we divide the proof into several Lemmas. From the above system and the fact that $\ell \in \{2,4\}$, $\|U\|_\HH=1$ and $\|F\|_\HH=o(1)$, we remark that 
\begin{equation}\label{p3-v135}
\left\{\begin{array}{lll}
\displaystyle \|v^1\|=O\left(\left|\la\right|^{-1}\right), \ \ \|v^3\|=O\left(\left|\la\right|^{-1}\right), \ \ \|v^5\|=O\left(\left|\la\right|^{-1}\right), \ \ \|v^1_{xx}\|=O\left(\left|\la\right|\right), \ \ \|v^3_{xx}\|=O\left(\left|\la\right|\right) \ \ \vspace{0.25cm}\\ 
\displaystyle \left\|\left[k_3 (v^5_x-lv^1)+d(x)(v^6_x-lv^2)\right]_x \right\|=O\left(\left|\la\right|\right).
\end{array}\right.
\end{equation}
Also, from Poincar\'e inequality and the fact that $\|F\|_\HH=o(1)$, we remark that 
\begin{equation}\label{p3-poincare}
	\|f^1\|\lesssim \|f^1_x\|=o(1), \ \ 	\|f^3\|\lesssim \|f^3_x\|=o(1) \ \ \text{and} \ \ 	\|f^5\|\lesssim \|f^5_x\|=o(1).
\end{equation}
\begin{lem}\label{p3-1stlemps}
		{\rm If $\left(\frac{k_1}{\rho_1}=\frac{k_2}{\rho_2}\ \ \text{and} \ \ \ell=2\right)$ or $\left(\frac{k_1}{\rho_1}\neq \frac{k_2}{\rho_2} \ \ \text{and} \ \ \ell=4\right)$. Then, the solution $U=(v^1,v^2,v^3,v^4,v^5,v^6)^{\top}\in D(\AA)$ of  \eqref{p3-f1ps}-\eqref{p3-f6ps} satisfies the following estimations 
		\begin{equation}\label{p3-v65xx}
			\int_{\alpha}^{\beta } \left|v^6_x-lv^2\right|^2dx=\frac{o(1)}{\la^{\ell}}, \ \ \int_{\alpha}^{\beta } \left|v^5_x-lv^1\right|^2dx=\frac{o(1)}{\la^{\ell+2}}, \ \ \int_{\alpha}^{\beta } \left|v^6_x\right|^2dx=O(1) \ \  \text{and}\ \ \int_{\alpha}^{\beta } \left|v^5_x\right|^2dx=\frac{O(1)}{\la^2}.
		\end{equation}
	}
\end{lem}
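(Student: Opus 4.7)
The approach is to extract the four estimates sequentially, starting from the dissipation identity and then converting information on $v^6_x-lv^2$ into information on $v^5_x-lv^1$, and finally peeling off the pointwise terms $lv^2$ and $lv^1$ via the triangle inequality.

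\textbf{Step 1 (first estimate).} The plan is to take the $\HH$-inner product of the rescaled resolvent identity $(i\la I-\AA)U=\la^{-\ell}F$ with $U$ and extract the real part. Since $\Re(i\la U,U)_{\HH}=0$, the computation of $\Re(\AA U,U)_{\HH}$ carried out in \eqref{p3-Reau} gives
\[
d_0\int_{\alpha}^{\beta}|v^6_x-lv^2|^2\,dx=-\Re(\AA U,U)_{\HH}=\la^{-\ell}\Re(F,U)_{\HH}\le \la^{-\ell}\|F\|_{\HH}\|U\|_{\HH}=\frac{o(1)}{\la^{\ell}},
\]
which yields the first estimate of \eqref{p3-v65xx}.

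\textbf{Step 2 (second estimate).} From \eqref{p3-f1ps} and \eqref{p3-f5ps} we have $v^2=i\la v^1-\la^{-\ell}f^1$ and $v^6=i\la v^5-\la^{-\ell}f^5$; differentiating the second one in $x$ and combining gives the key identity
\[
v^6_x-lv^2=i\la(v^5_x-lv^1)-\la^{-\ell}(f^5_x-lf^1).
\]
Solving for $i\la(v^5_x-lv^1)$, squaring, integrating over $(\alpha,\beta)$ and applying the inequality $(a+b)^2\le 2a^2+2b^2$ gives
\[
\la^2\int_{\alpha}^{\beta}|v^5_x-lv^1|^2\,dx\le 2\int_{\alpha}^{\beta}|v^6_x-lv^2|^2\,dx+2\la^{-2\ell}\int_{\alpha}^{\beta}|f^5_x-lf^1|^2\,dx.
\]
Combining Step~1 with the Poincar\'e-type bounds \eqref{p3-poincare} (which give $\|f^5_x-lf^1\|=o(1)$) the right-hand side is $o(\la^{-\ell})$, so dividing by $\la^2$ yields the second estimate.

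\textbf{Step 3 (third and fourth estimates).} For the third estimate I will use the triangle inequality $|v^6_x|^2\le 2|v^6_x-lv^2|^2+2l^2|v^2|^2$ integrated over $(\alpha,\beta)$. The first term is $o(\la^{-\ell})=o(1)$ by Step~1, and $\int_{\alpha}^{\beta}|v^2|^2\,dx\le \|v^2\|^2\le \rho_1^{-1}\|U\|_{\HH}^2=O(1)$ by the normalization \eqref{p3-contra-pol2}, giving $\int_{\alpha}^{\beta}|v^6_x|^2\,dx=O(1)$. Analogously, from $|v^5_x|^2\le 2|v^5_x-lv^1|^2+2l^2|v^1|^2$ and the already-known bound $\|v^1\|=O(\la^{-1})$ in \eqref{p3-v135}, together with the second estimate above (which is even smaller than $O(\la^{-2})$ since $\ell\ge 2$), I obtain $\int_{\alpha}^{\beta}|v^5_x|^2\,dx=O(\la^{-2})$, which is the fourth estimate.

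\textbf{Main obstacle.} There is no serious technical obstacle in this lemma: all four estimates follow from the dissipation identity and simple algebra using the coupling relations coming from \eqref{p3-f1ps} and \eqref{p3-f5ps}. The only point requiring minor care is to keep track of which powers of $\la^{-1}$ are available from \eqref{p3-v135} and \eqref{p3-poincare}, and to notice that the hypothesis $\ell\in\{2,4\}$ ensures the error term $o(\la^{-2\ell})$ is negligible in Step~2. The real work, as expected in such rate analyses, will come in later lemmas where these localized estimates have to be transported away from the damping region.
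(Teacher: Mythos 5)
Your proposal is correct and follows essentially the same route as the paper: the dissipation identity for the first estimate, the coupling relation $i\la(v^5_x-lv^1)-(v^6_x-lv^2)=\la^{-\ell}(f^5_x-lf^1)$ for the second, and triangle inequalities for the remaining two. The only (inessential) deviation is in the fourth estimate, where you bound $\int_\alpha^\beta|v^5_x|^2dx$ via $|v^5_x|^2\le 2|v^5_x-lv^1|^2+2l^2|v^1|^2$ together with $\|v^1\|=O(|\la|^{-1})$, whereas the paper instead uses \eqref{p3-f5ps} to write $v^5_x$ in terms of $v^6_x$ and $f^5_x$ and invokes its third estimate; both give $O(\la^{-2})$.
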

\begin{proof}
		First, taking the inner product of \eqref{p3-eq0ps} with $U$ in $\HH$ and using \eqref{p3-Reau}, we get
	\begin{equation}\label{p3-4.10}
	\displaystyle	\intdx d(x)\left|v^6_x-lv^2\right|^2dx =d_0 \intdnd \left|v^6_x-lv^2\right|^2dx=-\Re \left(\AA U,U\right)_{\HH}=\la^{-\ell}\Re  \left(F,U\right)_{\HH} \leq \la^{-\ell} \|F\|_{\HH}\|U\|_{\HH} .
	\end{equation}Thus, from \eqref{p3-4.10}  and the fact that $\|F\|_{\HH}=o(1)$ and $\|U\|_{\HH}=1$, we obtain the first estimation in \eqref{p3-v65xx}. Deriving \eqref{p3-f5ps} with respect to $x$ and multiply \eqref{p3-f1ps} by $l$, then subtract the resulting equations, we deduce that 
	$$
	i\la (v^5_x-lv^1)-(v^6_x-lv^2)=\la^{-\ell}(f^5_x-lf^1).
	$$
	From the above equation, we obtain
	\begin{equation}\label{p3-4.11}
	\begin{array}{lll}
	\displaystyle	\intdnd \left|v^5_x-lv^1\right|^2dx \leq \frac{2}{\la^{2}}	\intdnd \left|v^6_x-lv^2\right|^2dx+\frac{2}{\la^{2\ell+2}}	\intdnd \left|f^5_x-lf^1\right|^2dx\vspace{0.25cm}\\ \hspace{3cm}\displaystyle \leq \frac{2}{\la^{2}}	\intdnd \left|v^6_x-lv^2\right|^2dx+\frac{4}{\la^{2\ell+2}}\|f^5_x\|^2 +\frac{4l^2}{\la^{2\ell+2}} \|f^1\|^2.
		\end{array}
	\end{equation}
	From \eqref{p3-4.11}, the first estimation in \eqref{p3-v65xx} and the fact that $\ell\in\{2,4\}$, $\|f^1\|=o(1)$ (see \eqref{p3-poincare}), $\|f^5_x\|=o(1)$, we get the second estimation in \eqref{p3-v65xx}. Now, it is easy to see that
	$$
	\intdnd |v^6_x|^2 dx =\intdnd |v^6_x-lv^2+lv^2|^2dx \leq 2 \intdnd |v^6_x-lv^2|^2 dx +2  l^2 \intdnd |v^2|^2dx 
	$$
	From the above estimation, the first estimation in \eqref{p3-v65xx} and the fact that $v^2$ is uniformly bounded in $L^2 (0,L)$, we get the third estimation in \eqref{p3-v65xx}. From \eqref{p3-f5ps}, we deduce that 
	$$
	\intdnd \left|v^5_x\right|^2dx \leq \frac{2}{\la^{2}}	\intdnd \left|v^6_x\right|^2dx+\frac{2}{\la^{2\ell+2}}	\intdnd \left|f^5_x\right|^2dx
	$$
	Finally, from the above estimation, the third estimation in \eqref{p3-v65xx} and the fact that $\|f^5_x\|=o(1)$, we obtain the fourth estimation in \eqref{p3-v65xx}.
	  The proof is thus complete.
\end{proof}\\\linebreak
For all $\displaystyle 0<\varepsilon<\frac{\beta-\alpha}{10}$, we fix the following cut-off functions \\ \linebreak
\begin{itemize}
\item  $\f_j\in C^{2}\left([0,L]\right)$, $j\in \{1,\cdots,5\}$ such that $0\leq \f_j (x)\leq 1$, for all $x\in[0,L]$ and 
\begin{equation*}
\f_j (x)= 	\left \{ \begin{array}{lll}
1 &\text{if} \quad \,\,  x \in [\alpha+j\varepsilon ,\beta -j\varepsilon],&\vspace{0.1cm}\\
0 &\text{if } \quad x \in [0,\alpha+(j-1)\varepsilon]\cup [\beta +(1-j)\varepsilon ,L].&
\end{array}	\right. \qquad\qquad
\end{equation*}\\
\item  $\q_1, \q_2 \in C^{1}\left([0,L]\right)$ such that $0\leq \q_1 (x)\leq 1$, \  $0\leq \q_2 (x)\leq 1$, for all $x\in[0,L]$ and 
\begin{equation*}
\q_1 (x)= 	\left \{ \begin{array}{lll}
1 &\text{if} \quad \,\,  x \in [0,\gamma_1] ,&\vspace{0.1cm}\\
0 &\text{if } \quad x \in [\gamma_2,L ],&
\end{array} \text{and}	\right. 
\ \ 
\q_2 (x)= 	\left \{ \begin{array}{lll}
0 &\text{if} \quad \,\,  x \in [0,\gamma_1] ,&\vspace{0.1cm}\\
1 &\text{if } \quad x \in [\gamma_2,L ],&
\end{array}	\right. \text{with} \ \ 0<\alpha<\gamma_1 <\gamma_2 <\beta <L.
\end{equation*}
\\ 
\end{itemize}

\begin{lem}\label{p3-2ndlemps*}
	{\rm If $\left(\frac{k_1}{\rho_1}=\frac{k_2}{\rho_2}\ \ \text{and} \ \ \ell=2\right)$ or $\left(\frac{k_1}{\rho_1}\neq \frac{k_2}{\rho_2} \ \ \text{and} \ \ \ell=4\right)$. Then, the solution $U=(v^1,v^2,v^3,v^4,v^5,v^6)^{\top}\in D(\AA)$ of  \eqref{p3-f1ps}-\eqref{p3-f6ps} satisfies the following estimations 
		\begin{equation}\label{p3-4.12*}
	\int_{\alpha+\varepsilon}^{\beta-\varepsilon}|v^6|^2 dx =o(1) \ \ \text{and} \ \ \int_{\alpha+\varepsilon}^{\beta-\varepsilon}|\la v^5|^2 dx =o(1).
		\end{equation}	
		
	}
\end{lem}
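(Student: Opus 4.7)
The plan is to use a localization argument based on the cut-off function $\f_1$, which equals $1$ on $[\alpha+\varepsilon,\beta-\varepsilon]$ and vanishes outside $[\alpha,\beta]$. Specifically, multiply equation \eqref{p3-4.44*} by $\f_1\overline{v^5}$, integrate over $(0,L)$, and perform one integration by parts on the bracket term. Because $\f_1$ vanishes at the endpoints of its support, no boundary contribution survives, and the resulting identity reads
\begin{equation*}
\begin{array}{lll}
\displaystyle \la^2 \rho_1 \int_0^L \f_1 |v^5|^2\,dx = k_3 \int_0^L \f_1 |v^5_x|^2\,dx - lk_3 \int_0^L \f_1 v^1 \overline{v^5_x}\,dx + d_0 \int_\alpha^\beta \f_1 (v^6_x-lv^2)\overline{v^5_x}\,dx \vspace{0.25cm}\\
\displaystyle \ +\int_0^L \f_1' \left[k_3(v^5_x-lv^1)+d(x)(v^6_x-lv^2)\right] \overline{v^5}\,dx + lk_1 \int_0^L \f_1 (v^1_x+v^3+lv^5)\overline{v^5}\,dx \vspace{0.25cm}\\
\displaystyle \ -\rho_1 \la^{-\ell} \int_0^L \f_1 f^6 \overline{v^5}\,dx - i\rho_1 \la^{-\ell+1} \int_0^L \f_1 f^5 \overline{v^5}\,dx.
\end{array}
\end{equation*}

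The next step is to estimate each right-hand side term using the bounds already available. From Lemma \ref{p3-1stlemps} we have $\|v^5_x\|_{L^2(\alpha,\beta)} = O(\la^{-1})$ and $\|v^6_x - lv^2\|_{L^2(\alpha,\beta)} = o(\la^{-\ell/2})$. From \eqref{p3-f1ps} together with the uniform bound $\|v^2\| = O(1)$ (supplied by $\|U\|_\HH = 1$) we deduce $\|v^1\| = O(\la^{-1})$, and likewise $\|v^5\| = O(\la^{-1})$ by \eqref{p3-v135}. The energy bound also gives $\|v^5_x - lv^1\| = O(1)$ and $\|v^1_x + v^3 + lv^5\| = O(1)$. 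Applying Cauchy--Schwarz to each term and using $\ell \ge 2$ together with $\|f^5\|,\|f^5_x\|,\|f^6\| = o(1)$ (cf. \eqref{p3-poincare}), every term on the right is at most $O(\la^{-1})$. Consequently
\begin{equation*}
\la^2 \rho_1 \int_0^L \f_1 |v^5|^2\,dx = O(\la^{-1}),
\end{equation*}
and since $\f_1 \equiv 1$ on $[\alpha+\varepsilon,\beta-\varepsilon]$ this already yields the second estimate in \eqref{p3-4.12*}: $\int_{\alpha+\varepsilon}^{\beta-\varepsilon} |\la v^5|^2\,dx = O(\la^{-1}) = o(1)$.

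Finally, the first estimate in \eqref{p3-4.12*} follows immediately from \eqref{p3-f5ps}. Writing $v^6 = i\la v^5 - \la^{-\ell} f^5$ and using $\|f^5\| = o(1)$,
\begin{equation*}
\int_{\alpha+\varepsilon}^{\beta-\varepsilon} |v^6|^2\,dx \le 2 \int_{\alpha+\varepsilon}^{\beta-\varepsilon} |\la v^5|^2\,dx + 2\la^{-2\ell} \|f^5\|^2 = o(1).
\end{equation*}
The main technical difficulty is controlling the $v^1$-cross-terms produced by the curvature $l$ — namely $lk_3\int\f_1 v^1\overline{v^5_x}\,dx$ and $lk_1\int\f_1 (v^1_x+v^3+lv^5)\overline{v^5}\,dx$ — which at first glance threaten to be $O(1)$; the saving comes from the $O(\la^{-1})$ smallness of both $\|v^1\|$ and $\|v^5\|$, which is itself a consequence of the energy bound coupled with the first-order relations \eqref{p3-f1ps} and \eqref{p3-f5ps}.
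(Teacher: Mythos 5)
Your proof is correct and is essentially the paper's own argument in disguise: the paper multiplies \eqref{p3-f6ps} by $-i\la^{-1}\f_1\overline{v^6}$ (which equals $-\f_1\overline{v^5}$ up to an $O(|\la|^{-\ell-1})$ correction coming from \eqref{p3-f5ps}) and obtains the $v^6$ estimate first, then deduces the $v^5$ estimate, whereas you run the same localized multiplier computation on \eqref{p3-4.44*} against $\f_1\overline{v^5}$ and draw the two conclusions in the opposite order. All the ingredients coincide --- the cut-off $\f_1$, a single integration by parts on the bracket term, Lemma \ref{p3-1stlemps} for the local dissipation terms, and the $O(|\la|^{-1})$ bounds on $\|v^1\|$ and $\|v^5\|$ to control the curvature cross-terms --- so this is the same proof up to reorganization.
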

\begin{proof}
 First, multiplying \eqref{p3-f6ps} by $-i\la^{-1}\f_1\overline{v^6}$ and integrating over $(\alpha,\beta)$, then using the fact that $v^6$ is uniformly bounded in $L^2 (0,L)$ and $\|f^6\|=o(1)$, we obtain 
$$
 \rho_1 \intdnd \f_1 |v^6|^2 dx =-\frac{i}{\la}\intdnd \f_1 \left[k_3 (v^5_x-lv^1)+d(x)(v^6_x-lv^2)\right]_x\overline{v^6}dx +\frac{ilk_1}{\la} \intdnd \f_1 (v^1_x+v^3+lv^5)\overline{v^6}dx +\frac{o(1)}{|\la|^{\ell+1}},
$$
using the fact that $(v^1_x+v^3+lv^5)$, $v^6$ are uniformly bounded in $L^2 (0,L)$, we get  
$$
\frac{ilk_1}{\la} \intdnd \f_1 (v^1_x+v^3+lv^5)\overline{v^6}dx=o(1),
$$
consequently, as $\ell \in \{2,4\}$, we obtain 
\begin{equation}\label{p3-4.13*}
\rho_1 \intdnd \f_1 |v^6|^2 dx =\underbrace{\frac{i}{\la}\intdnd - \f_1 \left[k_3 (v^5_x-lv^1)+d(x)(v^6_x-lv^2)\right]_x\overline{v^6}dx}_{:=\mathtt{I}_1} +o(1). 
\end{equation}
Using integration by parts and the fact that $\f_1(\alpha)=\f_1(\beta)=0$, then using the definition of $d(x)$, we get 
\begin{equation*}
	\mathtt{I}_1 = \frac{i}{\la}\intdnd \f_1 \left[k_3 (v^5_x-lv^1)+d_0(v^6_x-lv^2)\right]\overline{v^6_x}dx+\frac{i}{\la}\intdnd \f_1^{\prime} \left[k_3 (v^5_x-lv^1)+d_0(v^6_x-lv^2)\right]\overline{v^6}dx,
\end{equation*}
using Lemma \ref{p3-1stlemps} and the fact that $v^6$ is uniformly bounded in $L^2 (0,L)$, $\ell \in \{2,4\}$, we get 
\begin{equation}\label{p3-4.15*}
	\mathtt{I}_1 =\frac{o(1)}{|\la|^{\frac{\ell}{2}+1}}.
\end{equation}
Inserting \eqref{p3-4.15*} in \eqref{p3-4.13*} and using the fact that $\ell \in \{2,4\}$, we obtain 
$$
\rho_1 \intdnd \f_1 |v^6|^2 dx=o(1).
$$
From the above estimation and the definition of $\f_1$, we obtain the first estimation in \eqref{p3-4.12*}. Next, from \eqref{p3-f5ps}, we deduce that 
$$
\int_{\alpha+\varepsilon}^{\beta-\varepsilon}|\la v^5|^2 dx \leq 2 \int_{\alpha+\varepsilon}^{\beta-\varepsilon}|v^6|^2dx+2\la^{-2\ell} \int_{\alpha+\varepsilon}^{\beta-\varepsilon}|f^5|^2 dx.
$$
Finally, from the above inequality, the first estimation in \eqref{p3-4.12*} and the fact that $\|f^5\|=o(1)$, $\ell \in \{2,4\}$, we obtain the second estimation in \eqref{p3-4.12*}. The proof is thus complete.
\end{proof}

\begin{lem}\label{p3-2ndlemps}
	{\rm If $\left(\frac{k_1}{\rho_1}=\frac{k_2}{\rho_2}\ \ \text{and} \ \ \ell=2\right)$ or $\left(\frac{k_1}{\rho_1}\neq \frac{k_2}{\rho_2} \ \ \text{and} \ \ \ell=4\right)$. Then, the solution $U=(v^1,v^2,v^3,v^4,v^5,v^6)^{\top}\in D(\AA)$ of  \eqref{p3-f1ps}-\eqref{p3-f6ps} satisfies the following estimations 
	\begin{equation}\label{p3-4.12}
	\int_{\alpha+2\varepsilon}^{\beta-2\varepsilon}\left|v^1_x\right|^2 dx =o(1) \quad \text{and} \quad \int_{\alpha+2\varepsilon}^{\beta-2\varepsilon}\left|\la v^1\right|^2 dx =o(1).
	\end{equation}	
		
	}
\end{lem}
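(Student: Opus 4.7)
The plan is to use the multiplier $\f_2\overline{v^1}$ on equation \eqref{p3-4.18*} and integrate over $(\alpha,\beta)$. Since $\f_2(\alpha)=\f_2(\beta)=0$, integration by parts on the principal elliptic term $k_1(v^1_x+v^3+lv^5)_x\overline{v^1}$ produces the two dominant quadratic forms $\rho_1\int\f_2|\la v^1|^2\,dx$ and $-k_1\int\f_2|v^1_x|^2\,dx$, alongside cross-terms involving $\f_2'$, $(v^5_x-lv^1)$, $(v^6_x-lv^2)$, and $(v^3+lv^5)$.

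Each of these cross-terms will be controlled as $o(1)$: the terms $-lk_3\int\f_2(v^5_x-lv^1)\overline{v^1}$ and $-ld_0\int\f_2(v^6_x-lv^2)\overline{v^1}$ by Lemma \ref{p3-1stlemps} combined with $\|v^1\|=O(|\la|^{-1})$; the $\f_2'$-supported term $-k_1\int\f_2'\overline{v^1}(v^1_x+v^3+lv^5)$ and the term $-k_1\int\f_2\overline{v^1_x}(v^3+lv^5)$ by the bound $\|v^3+lv^5\|=O(|\la|^{-1})$ from \eqref{p3-v135} together with $\|v^1_x\|=O(1)$; and the right-hand side contribution from $F$ is $O(|\la|^{-\ell})$. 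This yields the identity
\begin{equation*}
\rho_1\int_{\alpha}^{\beta}\f_2|\la v^1|^2\,dx - k_1\int_{\alpha}^{\beta}\f_2|v^1_x|^2\,dx = o(1).
\end{equation*}

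To extract both estimates individually rather than merely their difference, I would supplement this identity with a second one obtained from the $v^5$-equation \eqref{p3-4.44*}. Using the relation $v^6_x - lv^2 = i\la(v^5_x - lv^1) + o(|\la|^{-\ell})$ (which follows from differentiating \eqref{p3-f5ps} and subtracting $l$ times \eqref{p3-f1ps}) together with the bound $\|\la v^5\|_{L^2(\alpha+\varepsilon,\beta-\varepsilon)} = o(1)$ of Lemma \ref{p3-2ndlemps*}, an integration by parts with a multiplier adapted to the $v^5$-dynamics (for instance $\f_2\overline{v^5_x}$ together with the splitting $v^5_x = lv^1 + (v^5_x - lv^1)$) produces a complementary relation between the two target integrals. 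Combining the two relations, both $\int\f_2|\la v^1|^2\,dx$ and $\int\f_2|v^1_x|^2\,dx$ are forced to be $o(1)$; since $\f_2\equiv 1$ on $(\alpha+2\varepsilon, \beta-2\varepsilon)$, the claimed estimates follow.

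The main difficulty will be to design this second multiplier so as to yield a relation genuinely independent of the first identity — naive choices such as $-i\la^{-1}\f_2\overline{v^2}$ tested against \eqref{p3-f2ps} reproduce the same relation after using $v^2 = i\la v^1 + O(|\la|^{-\ell})$. The coupling between $v^1$ and the damped variables $v^5, v^6$ through the first and third equations of the system must therefore be exploited with care, leveraging the already-established local smallness of $v^5$ and $v^6$ on $(\alpha+\varepsilon, \beta-\varepsilon)$.
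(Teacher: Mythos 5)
Your first identity is correct: testing \eqref{p3-4.18*} against $\f_2\overline{v^1}$ does yield
$\rho_1\int\f_2|\la v^1|^2dx-k_1\int\f_2|v^1_x|^2dx=o(1)$,
and you rightly note this only controls the difference. The gap is that the second relation you need does not come out of the $v^5$-equation with the multipliers you suggest. On $\mathrm{supp}\,\f_2\subset(\alpha+\varepsilon,\beta-\varepsilon)$ the quantities $\la v^5$, $v^6$, $v^5_x$, $v^5_x-lv^1$ and $v^6_x-lv^2$ are all already known to be small (Lemmas \ref{p3-1stlemps} and \ref{p3-2ndlemps*}), so when you test \eqref{p3-4.44*} against $\f_2\overline{v^5_x}$ --- which by your own splitting is $l\f_2\overline{v^1}$ up to a negligible correction --- every term degenerates: $l\la^2\rho_1\int\f_2 v^5\overline{v^1}dx=l\rho_1\int\f_2(\la v^5)(\la\overline{v^1})dx=o(1)$ because $\|\la v^5\|_{L^2(\alpha+\varepsilon,\beta-\varepsilon)}=o(1)$ and $\|\la v^1\|=O(1)$; the term $\left[k_3(v^5_x-lv^1)+d(x)(v^6_x-lv^2)\right]_x$ against $l\f_2\overline{v^1}$ is $o(1)$ after one integration by parts; and $lk_1(v^1_x+v^3+lv^5)$ against $l\f_2\overline{v^1}$ is $O(|\la|^{-1})$. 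You are left with $o(1)=o(1)$: no second relation. The same collapse occurs for any multiplier that is $O(|\la|^{-1})$ in $L^2$, which is exactly what ``adapted to the $v^5$-dynamics'' produces here.

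The missing idea --- and what the paper actually does --- is to test the third equation \eqref{p3-f6ps} against $\f_2\overline{v^1_x}$, i.e.\ against a \emph{derivative} of $v^1$ rather than against $\overline{v^1}$ or $\overline{v^5_x}$. Then the coupling term $lk_1(v^1_x+v^3+lv^5)$ contributes $+lk_1\int\f_2|v^1_x|^2dx+o(1)$, while the inertial term $i\la\rho_1 v^6$ contributes, after integrating by parts and writing $v^6_x=(v^6_x-lv^2)+lv^2$ with $v^2=i\la v^1-\la^{-\ell}f^1$, exactly $+l\rho_1\int\f_2|\la v^1|^2dx+o(1)$; the remaining divergence term is $o(1)$ by Lemma \ref{p3-1stlemps} together with $\|v^1_{xx}\|=O(|\la|)$. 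Both target integrals therefore appear with the \emph{same} sign in a single identity, which forces each to be $o(1)$; your first identity is then not even needed. Your splitting $v^5_x=lv^1+(v^5_x-lv^1)$ (in the form $v^6_x=lv^2+(v^6_x-lv^2)$) is precisely the mechanism that makes the inertial contribution positive, but it must be applied inside the integrand after integrating $i\la\rho_1\int\f_2 v^6\overline{v^1_x}dx$ by parts, not used to construct the multiplier itself.
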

\begin{proof}
First, multiplying \eqref{p3-f6ps} by $\mathsf{f}_2\overline{v^1_x}$, integrating over $(\alpha+\varepsilon,\beta-\varepsilon)$,  using the fact that $v^1_x$ is uniformly bounded in $L^2(0,L)$ and $\|f^6\|=o(1)$, we get 
	\begin{equation*}\label{p3-4.13}
	\begin{array}{lll}
	\displaystyle	\underbrace{i\la \rho_1 \int_{\alpha+\varepsilon}^{\beta-\varepsilon} \f_2 v^6\overline{v^1_x}dx}_{:=\mathtt{I}_2}\  +\underbrace{\int_{\alpha+\varepsilon}^{\beta-\varepsilon} -\f_2 \left[k_3(v^5_x-lv^1) +d(x)(v^6_x-lv^2)\right]_x\overline{v^1_x }dx }_{:=\mathtt{I}_3} \vspace{0.25cm}\\ \displaystyle +\,lk_1 \int_{\alpha+\varepsilon}^{\beta-\varepsilon} \f_2 \left|v^1_x\right|^2 dx +lk_1 \int_{\alpha+\varepsilon}^{\beta-\varepsilon}\f_2 (v^3+lv^5)\overline{v^1_x }dx =o(\la^{-\ell}),
			\end{array}
	\end{equation*}
	using the fact that $v^1_x$ is uniformly bounded in $L^2(0,L)$, $\|v^3\|=O(|\la|^{-1})$, $\|v^5\|=O(|\la|^{-1})$ (see \eqref{p3-v135}), we get 
	$$
	lk_1 \int_{\alpha+\varepsilon}^{\beta-\varepsilon}\f_2 (v^3+lv^5)\overline{v^1_x }dx =o(1),
	$$
	consequently, as $\ell \in \{2,4\}$, we obtain
	\begin{equation}\label{p3-4.16}
lk_1 \int_{\alpha+\varepsilon}^{\beta-\varepsilon} \f_2 \left|v^1_x\right|^2 dx+\mathtt{I}_2+\mathtt{I}_3=o(1).
	\end{equation}
	Now, using integration by parts and the definition of $\f_2$, then using Lemma \ref{p3-2ndlemps*} and the fact that $\|v^1\|=O(|\la|^{-1})$, we get
	\begin{equation}\label{p3-4.17**}
		\mathtt{I}_2 =-i\la \rho_1 \int_{\alpha+\varepsilon}^{\beta-\varepsilon}\f_2 v^6_x \overline{v^1}dx-i\la \rho_1 \int_{\alpha+\varepsilon}^{\beta-\varepsilon}\f_2^{\prime} v^6 \overline{v^1}dx=-i\la \rho_1 \int_{\alpha+\varepsilon}^{\beta-\varepsilon}\f_2 v^6_x \overline{v^1}dx+o(1).
	\end{equation}
	Now, it is easy to see that 
	\begin{equation*}
	\begin{array}{lll}
	\displaystyle	-i\la \rho_1 \int_{\alpha+\varepsilon}^{\beta-\varepsilon}\f_2 v^6_x \overline{v^1}dx=-i\la \rho_1 \int_{\alpha+\varepsilon}^{\beta-\varepsilon}\f_2 (v^6_x-lv^2+lv^2) \overline{v^1}dx \vspace{0.25cm} \\ \hspace{3.5cm}\displaystyle =  -i\la \rho_1 \int_{\alpha+\varepsilon}^{\beta-\varepsilon}\f_2 (v^6_x-lv^2) \overline{v^1}dx-i\la \rho_1 l \int_{\alpha+\varepsilon}^{\beta-\varepsilon}\f_2 v^2 \overline{v^1}dx,
		\end{array}
	\end{equation*}
	using Lemma \ref{p3-1stlemps} and the fact that $\|v^1\|=O(|\la|^{-1})$, we get 
	\begin{equation*}
		-i\la \rho_1 \int_{\alpha+\varepsilon}^{\beta-\varepsilon}\f_2 v^6_x \overline{v^1}dx=-i\la \rho_1 l \int_{\alpha+\varepsilon}^{\beta-\varepsilon}\f_2 v^2 \overline{v^1}dx+o(|\la|^{-\frac{\ell}{2}}).
	\end{equation*}
	Inserting $v^2=i\la v^1-\la^{-\ell}f^1$ in the above equation, we get 
	\begin{equation*}
		-i\la \rho_1 \int_{\alpha+\varepsilon}^{\beta-\varepsilon}\f_2 v^6_x \overline{v^1}dx=l\rho_1 \int_{\alpha+\varepsilon}^{\beta-\varepsilon}\f_2 |\la v^1|^2dx +i\la^{-\ell+1}l\rho_1 \int_{\alpha+\varepsilon}^{\beta-\varepsilon}f^1 \overline{v^1}dx +o(|\la|^{-\frac{\ell}{2}}),
	\end{equation*}
	using the fact that $\|v^1\|=O(|\la|^{-1})$ and $\|f^1\|=o(1)$, $\ell \in \{2,4\}$, we get 
		\begin{equation}\label{p3-4.21*}
	-i\la \rho_1 \int_{\alpha+\varepsilon}^{\beta-\varepsilon}\f_2 v^6_x \overline{v^1}dx=l\rho_1 \int_{\alpha+\varepsilon}^{\beta-\varepsilon}\f_2 |\la v^1|^2dx  +o(|\la|^{-\frac{\ell}{2}}),
	\end{equation}
	Inserting
	\eqref{p3-4.21*} in \eqref{p3-4.17**} and using the fact that $\ell \in \{2,4\}$, we get
	\begin{equation}\label{p3-4.22*}\mathtt{I}_2=l\rho_1 \int_{\alpha+\varepsilon}^{\beta-\varepsilon}\f_2 |\la v^1|^2dx+o(1).
	\end{equation} 
Next, using integration by parts and the definition of $\f_2$, we get 
\begin{equation}\label{p3-4.14}
\mathtt{I}_3=\int_{\alpha+\varepsilon}^{\beta-\varepsilon} \f_2^{\prime} \left[k_3(v^5_x-lv^1) +d_0(v^6_x-lv^2)\right]\overline{v^1_x }dx+\int_{\alpha+\varepsilon}^{\beta-\varepsilon} \f_2 \left[k_3(v^5_x-lv^1) +d_0(v^6_x-lv^2)\right]\overline{v^1_{xx} }dx,
\end{equation}
using Lemma \ref{p3-1stlemps} and the fact that $v^1_x$ is uniformly bounded in $L^2(0,L)$, $\|v^1_{xx}\|=O(|\la|)$ (see \eqref{p3-v135}), $\ell \in \{2,4\}$, we get 
\begin{equation}\label{p3-4.24}
	\mathtt{I}_3=o(|\la|^{-\frac{\ell}{2}+1})
\end{equation}
Inserting \eqref{p3-4.22*} and \eqref{p3-4.24} in \eqref{p3-4.16} and using the fact that $\ell \in \{2,4\}$, we get 
\begin{equation}
lk_1 \int_{\alpha+\varepsilon}^{\beta-\varepsilon} \f_2 \left|v^1_x\right|^2 dx+l\rho_1 \int_{\alpha+\varepsilon}^{\beta-\varepsilon}\f_2 |\la v^1|^2dx=o(1).
\end{equation}
Finally, from the above estimation and the definition of $\f_2$, we obtain \eqref{p3-4.12}. The proof is thus complete.
\end{proof}
\begin{lem}\label{p3-3rdlem}
	{\rm If $\frac{k_1}{\rho_1}=\frac{k_2}{\rho_2}$ and $\ell=2$. Then, the solution $U=(v^1,v^2,v^3,v^4,v^5,v^6)^{\top}\in D(\AA)$ of  \eqref{p3-f1ps}-\eqref{p3-f6ps} satisfies the following estimations 
 \begin{equation}\label{p3-4.17*}
\int_{\alpha+3\varepsilon}^{\beta-3\varepsilon}\left|v^3_x\right|^2dx =o(1) \quad \text{and} \quad \int_{\alpha+4\varepsilon}^{\beta-4\varepsilon}\left|\la v^3\right|^2dx =o(1).
 \end{equation}
}
\end{lem}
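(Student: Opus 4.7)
The plan is to first establish an equipartition identity for $v^3$ from \eqref{p3-4.18**}, and then use a cross-multiplier between the $v^1$ and $v^3$ equations that relies critically on the equal wave-speed condition $k_1/\rho_1=k_2/\rho_2$ to separate the kinetic and elastic terms.

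\textbf{Step 1 (equipartition).} I would multiply \eqref{p3-4.18**} by $\f_3\overline{v^3}$, integrate over $(0,L)$, and perform one integration by parts in the $v^3_{xx}$-term. Using $\|v^3\|=O(|\la|^{-1})$, the uniform $L^2$-bounds on $v^3_x$ and on $v^1_x+v^3+lv^5$, and the $O(|\la|^{-\ell+1})=O(|\la|^{-1})$ control of the source, the two remainder integrals (the $\f_3'$-term and the shear coupling $\int \f_3(v^1_x+v^3+lv^5)\overline{v^3}$) are $o(1)$. This yields the balance
\begin{equation*}
\rho_2 \int_0^L \f_3 |\la v^3|^2\,dx - k_2 \int_0^L \f_3 |v^3_x|^2\,dx = o(1), \qquad (\star)
\end{equation*}
reducing both estimates in \eqref{p3-4.17*} to proving $\int_0^L \f_3 |v^3_x|^2 = o(1)$.

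\textbf{Step 2 (cross-multiplier).} I would then multiply \eqref{p3-4.18*} by $(k_2/k_1)\,\f_3\overline{v^3_x}$ and \eqref{p3-4.18**} by $\f_3\overline{v^1_x}$ and add. Under the equal-speed condition $k_2\rho_1=k_1\rho_2$, the two $\la^2$-coefficients coincide and the $\la^2$-terms fuse into
\begin{equation*}
\rho_2\la^2\int_0^L \f_3\bigl(v^1\overline{v^3_x}+v^3\overline{v^1_x}\bigr)\,dx,
\end{equation*}
whose real part equals $-\rho_2\la^2\int_0^L \f_3'\,\Re(v^1\overline{v^3})\,dx$ after integration by parts. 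Since $\mathrm{supp}\,\f_3' \subset [\alpha+2\eps,\alpha+3\eps]\cup[\beta-3\eps,\beta-2\eps]$, Lemma \ref{p3-2ndlemps} gives $\|\la v^1\|_{L^2(\mathrm{supp}\,\f_3')}=o(1)$, hence $\|v^1\|_{L^2(\mathrm{supp}\,\f_3')}=o(|\la|^{-1})$, and this $\la^2$-contribution is $o(1)$. The second-derivative block is controlled through the symmetric identity
\begin{equation*}
\Re\bigl(v^1_{xx}\overline{v^3_x}+v^3_{xx}\overline{v^1_x}\bigr)=\Re\bigl(v^1_x\overline{v^3_x}\bigr)_x,
\end{equation*}
so one further integration by parts converts it to $-k_2\int_0^L \f_3'\,\Re(v^1_x\overline{v^3_x})\,dx=o(1)$ (again by Lemma \ref{p3-2ndlemps}), together with the surviving term $k_2\int_0^L \f_3|v^3_x|^2\,dx$. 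The residual pieces — the $l\int \f_3 v^5_x\overline{v^3_x}$-term, the shear coupling $-k_1\int \f_3(v^1_x+v^3+lv^5)\overline{v^1_x}$, and the Kelvin--Voigt terms inherited from \eqref{p3-4.18*} — are all $o(1)$ using $\|v^1_x\|_{L^2(\mathrm{supp}\,\f_3)}=o(1)$, $\|v^5_x\|_{L^2(\mathrm{supp}\,\f_3)}=o(|\la|^{-1})$ (Lemma \ref{p3-1stlemps} combined with Lemma \ref{p3-2ndlemps}), and the estimates of Lemma \ref{p3-1stlemps} on $v^5_x-lv^1$ and $v^6_x-lv^2$. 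Taking real parts and collecting everything yields $k_2\int_0^L \f_3|v^3_x|^2\,dx=o(1)$, so $\int_{\alpha+3\eps}^{\beta-3\eps}|v^3_x|^2\,dx=o(1)$; substituting back into $(\star)$ gives $\int_0^L \f_3|\la v^3|^2=o(1)$, and the second estimate in \eqref{p3-4.17*} follows since $[\alpha+4\eps,\beta-4\eps]\subset[\alpha+3\eps,\beta-3\eps]$.

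\textbf{Main obstacle.} The delicate point is that the $\la^2$-weighted integral in Step 2 becomes $o(1)$ only because two ingredients cooperate: (i) the equal wave-speed condition $k_2\rho_1=k_1\rho_2$ lets the two $\la^2$-contributions from the $v^1$ and $v^3$ equations fuse into a perfect $x$-derivative of $v^1\overline{v^3}$; (ii) the sharpened localized bound $\|\la v^1\|_{L^2(\mathrm{supp}\,\f_3')}=o(1)$ from Lemma \ref{p3-2ndlemps} supplies the extra factor of $|\la|^{-1}$ beyond the a priori $O(1)$ bound on $\la v^1$. Without the equal wave-speeds hypothesis this cancellation collapses, consistent with the weaker rate $t^{-1/2}$ recovered in Theorem \ref{p3-pol-neq}.
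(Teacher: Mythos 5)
Your proposal is correct and follows essentially the same strategy as the paper: the paper likewise pairs the $v^1$-equation against $\f_3\overline{v^3_x}$ and the $v^3$-equation against $\f_3 v^1_x$, uses the equal wave-speed condition $k_1/\rho_1=k_2/\rho_2$ to cancel the $\la^2\int\f_3 v^1\overline{v^3_x}$ and $\int\f_3 v^1_{xx}\overline{v^3_x}$ blocks via integration by parts and the localized bounds of Lemmas \ref{p3-1stlemps}--\ref{p3-2ndlemps}, and then recovers $|\la v^3|^2$ by testing the $v^3$-equation with a cut-off times $\overline{v^3}$. The only differences are organizational (you derive the equipartition identity first with $\f_3$ and add the two cross-multiplied identities taking real parts, while the paper substitutes one into the other and uses $\f_4$ for the last step), which does not affect validity.
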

\begin{proof}
First, take $\ell =2$ in \eqref{p3-4.18*} and multiply it by $\rho_1^{-1}\f_3\overline{v^3_x}$, integrating over $(\alpha+2
\varepsilon,\beta-2\varepsilon)$, using the definition of $d(x)$ and fact that $v^3_x$ is uniformly bounded in $L^2(0,L)$, $\|f^1\|=o(1)$, $\|f^2\|=o(1)$, we obtain
\begin{equation}\label{p3-4.21}
\begin{array}{lll}
\displaystyle \frac{k_1}{\rho_1}\int_{\alpha+2\varepsilon}^{\beta-2\varepsilon} \f_3 \left|v^3_x\right|^2 dx =-\la^2 \int_{\alpha+2\varepsilon}^{\beta-2\varepsilon}  \f_3 v^1 \overline{v^3_x}dx-\frac{k_1}{\rho_1}\int_{\alpha+2\varepsilon}^{\beta-2\varepsilon}  \f_3 v^1_{xx}\overline{v^3_x}dx -\frac{lk_1}{\rho_1}\int_{\alpha+2\varepsilon}^{\beta-2\varepsilon}  \f_3 v^5_x\overline{v^3_x}dx   \vspace{0.25cm}\\ \hspace{3.5cm}\displaystyle -\,\frac{lk_3}{\rho_1}\int_{\alpha+2\varepsilon}^{\beta-2\varepsilon}  \f_3 (v^5_x-lv^1)\overline{v^3_x}dx -\frac{ld_0}{\rho_1}\int_{\alpha+2\varepsilon}^{\beta-2\varepsilon}  \f_3 (v^6_x-lv^2)\overline{v^3_x}dx +o(|\la|^{-1}).
\end{array}
\end{equation}
Using Lemma \ref{p3-1stlemps} with $\ell=2$, the definition of $\f_3$ and the fact that $v^3_x$ is uniformly bounded in $L^2(0,L)$, we get 
\begin{equation}\label{p3-4.22}
\left\{\begin{array}{lll}
\displaystyle -	\frac{lk_1}{\rho_1} \int_{\alpha+2\varepsilon}^{\beta-2\varepsilon} \f_3 v^5_x \overline{v^3_x }dx=o(1), \ \    -\frac{lk_3}{\rho_1}\int_{\alpha+2\varepsilon}^{\beta-2\varepsilon}  \f_3 (v^5_x-lv^1)\overline{v^3_x}dx=o(\la^{-2}) \ \ \text{and} \vspace{0.25cm}\\ \displaystyle -\frac{ld_0}{\rho_1}\int_{\alpha+2\varepsilon}^{\beta-2\varepsilon}  \f_3 (v^6_x-lv^2)\overline{v^3_x}dx=o(|\la|^{-1}).
\end{array}
\right.
\end{equation}
Inserting \eqref{p3-4.22} in \eqref{p3-4.21}, we get 
\begin{equation}\label{p3-4.23}
	\frac{k_1}{\rho_1}\intdx \f_3 \left|v^3_x\right|^2 dx =-\la^2 \intdx \f_3 v^1 \overline{v^3_x}dx-\frac{k_1}{\rho_1}\intdx \f_3 v^1_{xx}\overline{v^3_x}dx+o(1).
\end{equation}
Now, taking $\ell=2$ in \eqref{p3-4.18**}, we deduce that  
\begin{equation}\label{p3-4.20}
\la^2 \rho_2 \overline{v^3} +k_2 \overline{v^3_{xx}}-k_1 (\overline{v^1_x} +\overline{v^3}+l\overline{v^5})=-\rho_2 \la^{-2}\overline{f^4}+i\rho_2\la^{-1}\overline{f^3}.
\end{equation}
Multiplying \eqref{p3-4.20} by $\rho_2^{-1}\f_3v^1_x$, integrating over $(\alpha+2\varepsilon,\beta-2\varepsilon)$, we obtain
\begin{equation}
	\la^2 \int_{\alpha+2\varepsilon}^{\beta-2\varepsilon} \f_3 v^1_x \overline{v^3 }dx +\frac{k_2}{\rho_2}\int_{\alpha+2\varepsilon}^{\beta-2\varepsilon} \f_3 v^1_{x}\overline{v^3_{xx}}dx -\frac{k_1}{\rho_2}\int_{\alpha+2\varepsilon}^{\beta-2\varepsilon} \f_3 v^1_x (\overline{v^1_x} +\overline{v^3} +l\overline{v^5})dx=o(|\la|^{-1}) 
\end{equation}
Using integration by parts to the first two terms in the above equation, we get 
\begin{equation}\label{p3-4.25}
\begin{array}{lll}
\displaystyle-\la^2 \int_{\alpha+2\varepsilon}^{\beta-2\varepsilon} \f_3 v^1 \overline{v^3_x }dx -\frac{k_2}{\rho_2}\int_{\alpha+2\varepsilon}^{\beta-2\varepsilon} \f_3 v^1_{xx}\overline{v^3_{x}}dx= \la^2 \int_{\alpha+2\varepsilon}^{\beta-2\varepsilon} \f_3^{\prime} v^1 \overline{v^3}dx+\frac{k_2}{\rho_2}\int_{\alpha+2\varepsilon}^{\beta-2\varepsilon} \f_3^{\prime} v^1_{x}\overline{v^3_{x}}dx\vspace{0.25cm}\\\displaystyle +\, \frac{k_1}{\rho_2}\int_{\alpha+2\varepsilon}^{\beta-2\varepsilon} \f_3 v^1_x (\overline{v^1_x} +\overline{v^3} +l\overline{v^5})dx+o(|\la|^{-1}).
\end{array}
\end{equation}
Using Lemma \ref{p3-2ndlemps} and the fact that $v^3_x $, $(v^1_x+v^3+lv^5)$ are uniformly bounded in $L^2 (0,L)$ and $\|v^3\|=O(|\la|^{-1})$, we get 
\begin{equation}\label{p3-4.26}
\displaystyle\la^2 \int_{\alpha+2\varepsilon}^{\beta-2\varepsilon}\f_3^{\prime} v^1 \overline{v^3}dx =o(1), \ \ \frac{k_2}{\rho_2}\int_{\alpha+2\varepsilon}^{\beta-2\varepsilon} \f_3^{\prime} v^1_{x}\overline{v^3_{x}}dx=o(1), \ \ \frac{k_1}{\rho_2}\int_{\alpha+2\varepsilon}^{\beta-2\varepsilon} \f_3 v^1_x (\overline{v^1_x} +\overline{v^3} +l\overline{v^5})dx=o(1).
\end{equation}
Inserting \eqref{p3-4.26} in \eqref{p3-4.25}, then using the fact that $\displaystyle \frac{k_2}{\rho_2}=\frac{k_1}{\rho_1}$, we get 
\begin{equation*}
 -\la^2 \int_{\alpha+2\varepsilon}^{\beta-2\varepsilon} \f_3 v^1 \overline{v^3_x }dx -\frac{k_1}{\rho_1}\int_{\alpha+2\varepsilon}^{\beta-2\varepsilon} \f_3 v^1_{xx}\overline{v^3_{x}}dx=o(1).
\end{equation*}
Inserting the above estimation in \eqref{p3-4.23}, then using the definition of $\f_3$, we obtain the first estimation in \eqref{p3-4.17*}. Next, multiplying \eqref{p3-4.20} by $\f_4 v^3$, integrating over $(\alpha+3\varepsilon,\beta-3\varepsilon)$, using integration by parts and the definition of $\f_4$ and the fact that $\|v^3\|=O(|\la|^{-1})$, $\|f^3\|=o(1)$ and $\|f^4\|=o(1)$, we get
\begin{equation*}
	\rho_2 \int_{\alpha+3\varepsilon}^{\beta-3\varepsilon} \f_4 \left|\la v^3\right|^2 dx =k_2 \int_{\alpha+3\varepsilon}^{\beta-3\varepsilon} \f_4 |v^3_x|^2dx+k_2 \int_{\alpha+3\varepsilon}^{\beta-3\varepsilon} \f_4^{\prime} \overline{v^3_{x}}v^3 dx +k_1 \int_{\alpha+3\varepsilon}^{\beta-3\varepsilon} \f_4 (\overline{v^1_x} +\overline{v^3}+l\overline{v^5})v^3 dx +o(\la^{-2}).
\end{equation*}
From the above estimation, the first estimation in \eqref{p3-4.17*} and the fact that $(v^1_x+v^3+lv^5)$ is uniformly bounded in $L^2 (0,L)$ and $\|v^3\|=O(|\la|^{-1})$, we obtain  
$$
	\rho_2 \int_{\alpha+3\varepsilon}^{\beta-3\varepsilon} \f_4 \left|\la v^3\right|^2 dx=o(1).
$$
Finally, from the above estimation and the definition of $\f_4$, we obtain the second estimation desired. The proof is thus complete.
\end{proof} 
\begin{lem}\label{p3-4thlemma}
 	{\rm If  $\frac{k_1}{\rho_1}\neq \frac{k_2}{\rho_2}$ and $\ell=4$. Then, the solution $U=(v^1,v^2,v^3,v^4,v^5,v^6)^{\top}\in D(\AA)$ of  \eqref{p3-f1ps}-\eqref{p3-f6ps} satisfies the following estimation
	\begin{equation}\label{p3-4.35*}
	\int_{\alpha+3\varepsilon}^{\beta-3\varepsilon}|\la v^1|^2 dx =o(\la^{-2}).
	\end{equation}
}
\end{lem}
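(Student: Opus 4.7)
The plan is to refine the estimate $\int_{\alpha+2\varepsilon}^{\beta-2\varepsilon}|\la v^1|^2\,dx=o(1)$ of Lemma \ref{p3-2ndlemps} by an additional factor of $\la^{-2}$ on the slightly narrower interval $(\alpha+3\varepsilon,\beta-3\varepsilon)$. Two ingredients drive the improvement: the stronger dissipation bounds available for $\ell=4$ in Lemma \ref{p3-1stlemps}, namely $\|v^6_x-lv^2\|_{L^2(\alpha,\beta)}=o(\la^{-2})$ and $\|v^5_x-lv^1\|_{L^2(\alpha,\beta)}=o(\la^{-3})$, together with the non-resonance hypothesis $k_1/\rho_1\neq k_2/\rho_2$.

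First, I would use the cutoff $\mathsf{f}_5$ already fixed in the preceding list, which satisfies $\mathsf{f}_5\equiv 1$ on $[\alpha+3\varepsilon,\beta-3\varepsilon]$ and vanishes outside $[\alpha+2\varepsilon,\beta-2\varepsilon]$, so that $\operatorname{supp}\mathsf{f}_5'$ lies in the region where Lemma \ref{p3-2ndlemps} already yields $\|v^1_x\|_{L^2}=o(1)$ and $\|v^1\|_{L^2}=o(\la^{-1})$. In the damping interval I would absorb $lk_3(v^5_x-lv^1)$ and $ld_0(v^6_x-lv^2)$ in \eqref{p3-4.18*} into an $L^2$-residual of size $o(\la^{-2})$, so that \eqref{p3-4.18*} reduces to $\la^2\rho_1 v^1+k_1(v^1_x+v^3+lv^5)_x=\eta$ with $\|\eta\|_{L^2(\alpha,\beta)}=o(\la^{-2})$.

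The core step is then to insert the identity $k_1(v^1_x+v^3+lv^5)=\la^2\rho_2 v^3+k_2 v^3_{xx}+O(\la^{-4})$ from \eqref{p3-4.18**} and differentiate once in the distributional sense, obtaining
\begin{equation*}
\la^2\rho_1 v^1+\la^2\rho_2 v^3_x+k_2 v^3_{xxx}=o(\la^{-2})\quad\text{in }\mathcal{D}'(\alpha,\beta).
\end{equation*}
I would pair this identity with the test function $\la^{-2}\mathsf{f}_5\overline{v^1}$ and perform integration by parts, once for the $v^3_x\overline{v^1}$ term and twice for the $v^3_{xxx}\overline{v^1}$ term. Whenever a $v^3_{xx}$ factor reappears after the second integration by parts, I would re-use \eqref{p3-4.18**} to trade it for $-\la^2\rho_2 v^3+k_1(v^1_x+v^3+lv^5)$ plus a harmless $O(\la^{-4})$ remainder. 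The surviving cross-terms are all of the form $\int\mathsf{f}_5'v^3\overline{v^1}\,dx$, $\int\mathsf{f}_5 v^3\overline{v^1_x}\,dx$, and $\int\mathsf{f}_5(v^1_x+v^3+lv^5)\overline{v^1_x}\,dx$, each of which I would control using the previously established estimates on $v^1,v^1_x,v^3$ and $v^5$ from Lemmas \ref{p3-1stlemps}, \ref{p3-2ndlemps*}, and \ref{p3-2ndlemps}.

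The delicate point, and the main obstacle, is the term $\la^{-2}\int\mathsf{f}_5 v^3_{xx}\overline{v^1_x}\,dx$ produced by the double integration by parts: the substitution of $v^3_{xx}$ via \eqref{p3-4.18**} generates a contribution proportional to $\bigl(k_1/k_2-\rho_1/\rho_2\bigr)\int\mathsf{f}_5|v^1_x|^2\,dx$, and under the resonance $k_1/\rho_1=k_2/\rho_2$ this coefficient would vanish, defeating the argument. The hypothesis $k_1/\rho_1\neq k_2/\rho_2$ keeps this coefficient away from zero, so the term can be moved to the left-hand side and combined with $\rho_1\int\mathsf{f}_5|v^1|^2\,dx$ (using Lemma \ref{p3-2ndlemps} for $\int\mathsf{f}_5|v^1_x|^2\,dx$) to yield $\int\mathsf{f}_5|v^1|^2\,dx=o(\la^{-4})$. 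Restricting to $[\alpha+3\varepsilon,\beta-3\varepsilon]$ where $\mathsf{f}_5\equiv 1$ then gives $\int_{\alpha+3\varepsilon}^{\beta-3\varepsilon}|\la v^1|^2\,dx=o(\la^{-2})$, as claimed.
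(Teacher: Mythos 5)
There is a genuine gap: your error accounting falls two powers of $\la$ short of the target. The claim \eqref{p3-4.35*} amounts to $\int \f_3|v^1|^2\,dx=o(\la^{-4})$, but every residual term you propose to ``control using the previously established estimates'' is only $o(\la^{-1})$ or $o(\la^{-2})$. Concretely, after pairing your distributional identity with $\la^{-2}\f_3\overline{v^1}$, the term $\rho_2\int \f_3 v^3\overline{v^1_x}\,dx$ is only $O(\la^{-1})\cdot o(1)=o(\la^{-1})$ (Lemma \ref{p3-2ndlemps} gives $\|v^1_x\|_{L^2(\alpha+2\varepsilon,\beta-2\varepsilon)}=o(1)$ and nothing better at this stage), the term $\la^{-2}k_1\int\f_3(v^1_x+v^3+lv^5)\overline{v^1_x}\,dx$ is $o(\la^{-2})$, and even your ``delicate'' contribution $\left(k_1/k_2-\rho_1/\rho_2\right)\la^{-2}\int\f_3|v^1_x|^2\,dx$ is merely $o(\la^{-2})$ once $\int\f_3|v^1_x|^2\,dx$ is bounded by Lemma \ref{p3-2ndlemps} as you propose — so transposing it yields only $\int\f_3|\la v^1|^2\,dx=o(1)$, which is Lemma \ref{p3-2ndlemps} again with no gain. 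A further sign the route is off: the hypothesis $k_1/\rho_1\neq k_2/\rho_2$ is never invoked in the paper's proof of this lemma; only $\ell=4$ matters here (the non-resonance condition merely forces the choice $\ell=4$ in the global contradiction argument). Also, the cutoff equal to $1$ on $[\alpha+3\varepsilon,\beta-3\varepsilon]$ and supported in $[\alpha+2\varepsilon,\beta-2\varepsilon]$ is $\f_3$, not $\f_5$.

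The paper's mechanism is entirely different: it exploits the $l$-coupling between the first and third equations together with the damping, not the second equation. Multiplying \eqref{p3-4.18*} by $l\g\overline{v^1}$ and \eqref{p3-f6ps} by $\g\overline{v^1_x}$ and adding, the terms $lk_1\int\g|v^1_x|^2dx$, $\Re\{lk_1\int\g v^3\overline{v^1_x}dx\}$ and $\Re\{l^2k_1\int\g v^5\overline{v^1_x}dx\}$ cancel exactly, and after an integration by parts in $i\la\rho_1\int\g v^6\overline{v^1_x}dx$ one arrives at
\begin{equation*}
2l\rho_1\int_{\alpha+2\varepsilon}^{\beta-2\varepsilon}\g|\la v^1|^2\,dx=\Re\left\{i\la\rho_1\int_{\alpha+2\varepsilon}^{\beta-2\varepsilon}\g'v^6\overline{v^1}\,dx\right\}-\Re\left\{d_0\int_{\alpha+2\varepsilon}^{\beta-2\varepsilon}\g(v^6_x-lv^2)\overline{v^1_{xx}}\,dx\right\}+o(\la^{-2}).
\end{equation*}
The first right-hand term is shown to be $o(\la^{-2})$ by testing \eqref{p3-f6ps} against $\g'\overline{v^1}$; the second is converted, via \eqref{p3-4.18*} tested against $\g(\overline{v^6_x}-l\overline{v^2})$, into $-\Re\{\tfrac{d_0\rho_1}{k_1}\la^2\int\g(v^6_x-lv^2)\overline{v^1}\,dx\}$ and then absorbed by Young's inequality using the $\ell=4$ dissipation bound $\int_\alpha^\beta|v^6_x-lv^2|^2\,dx=o(\la^{-4})$, so that $\la^2\int\g|v^6_x-lv^2|^2\,dx=o(\la^{-2})$. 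It is this quadratic gain from the damping, not the non-resonance of the wave speeds, that produces the extra factor $\la^{-2}$; your proposal would need to be reorganized around that cancellation to close.
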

\begin{proof} 
	For clarity, we divide the proof into five steps:\\
	\textbf{Step 1:} In this step, we will prove that:
		\begin{equation}\label{p3-4.37}
	\begin{array}{lll}
	\displaystyle	l\rho_1 \int_{\alpha+2\varepsilon}^{\beta-2\varepsilon} \g \left|\la v^1 \right|^2 dx-lk_1 \int_{\alpha+2\varepsilon}^{\beta-2\varepsilon} \g \left|v^1_x\right|^2 dx -\Re\left\{lk_1 \int_{\alpha+2\varepsilon}^{\beta-2\varepsilon} \g v^3 \overline{v^1_x}dx \right\}\vspace{0.25cm}\\\displaystyle -\Re \left\{l^2 k_1 \int_{\alpha+2\varepsilon}^{\beta-2\varepsilon} \g v^5 \overline{v^1_x}dx  \right\}=o(\la^{-2}).
	\end{array}
	\end{equation}
For this aim, take $\ell=4$ in \eqref{p3-4.18*} and multiply it by $l\g \overline{v^1} $, integrating over $(\alpha+2\varepsilon,\beta-2\varepsilon)$, using the fact that $\|v^1\|=O(|\la|^{-1})$, $\|f^1\|=o(1)$ and $\|f^2\|=o(1)$, then taking the real part, we get 
	\begin{equation}\label{p3-4.31}
	\begin{array}{lll}
	\displaystyle 	l\rho_1  \int_{\alpha+2\varepsilon}^{\beta-2\varepsilon} \g \left|\la v^1 \right|^2 dx +\underbrace{\Re \left\{lk_1 \int_{\alpha+2\varepsilon}^{\beta-2\varepsilon} \g (v^1_x +v^3 +lv^5)_x \overline{v^1 }dx  \right\}}_{:= \mathtt{I}_4}\vspace{0.25cm}\\ \displaystyle +\, \Re \left\{l^2k_3 \int_{\alpha+2\varepsilon}^{\beta-2\varepsilon} \g (v^5_x -lv^1 )\overline{v^1}dx  \right\}+\Re\left\{l^2d_0 \int_{\alpha+2\varepsilon}^{\beta-2\varepsilon}\f_3(v^6_x-lv^2)\overline{v^1}dx  \right\}=o(\la^{-4}).
	\end{array}
	\end{equation}
	Using integration by parts and the definition of $\g$, we obtain
	\begin{equation}\label{p3-4.32}
	\begin{array}{lll}
\displaystyle 	\mathtt{I}_4=-\Re \left\{lk_1 \int_{\alpha+2\varepsilon}^{\beta-2\varepsilon} \g^{\prime} (v^1_x +v^3 +lv^5) \overline{v^1 }dx  \right\}-\Re \left\{lk_1 \int_{\alpha+2\varepsilon}^{\beta-2\varepsilon} \g (v^1_x +v^3 +lv^5) \overline{v^1_x }dx  \right\}\vspace{0.25cm}\\ 
\hspace{0.5cm}\displaystyle =-\frac{lk_1}{2}\int_{\alpha+2\varepsilon}^{\beta-2\varepsilon} \g^{\prime}\left(\left|v^1\right|^2\right)_x dx -\Re \left\{lk_1 \int_{\alpha+2\varepsilon}^{\beta-2\varepsilon} \g^{\prime}v^3 \overline{v^1}dx  \right\}-\Re\left\{l^2 k_1 \int_{\alpha+2\varepsilon}^{\beta-2\varepsilon} \g^{\prime}v^5 \overline{v^1}dx  \right\}\vspace{0.25cm}\\ 
\hspace{0.5cm}\displaystyle -\,lk_1 \int_{\alpha+2\varepsilon}^{\beta-2\varepsilon} \g \left|v^1_x\right|^2 dx -\Re\left\{lk_1 \int_{\alpha+2\varepsilon}^{\beta-2\varepsilon} \g v^3 \overline{v^1_x}dx \right\}-\Re \left\{l^2 k_1 \int_{\alpha+2\varepsilon}^{\beta-2\varepsilon} \g v^5 \overline{v^1_x}dx  \right\}.
	\end{array}
	\end{equation}
	Using integration by parts  and the fact that $\g^{\prime}(\alpha+2\varepsilon)=\g^{\prime}(\beta-2\varepsilon)=0$, then using Lemma \ref{p3-2ndlemps}, we obtain
	\begin{equation}\label{p3-4.33}
	-\frac{lk_1}{2}\int_{\alpha+2\varepsilon}^{\beta-2\varepsilon} \g^{\prime}\left(\left|v^1\right|^2\right)_x dx =	\frac{lk_1}{2}\int_{\alpha+2\varepsilon}^{\beta-2\varepsilon} \g^{\prime \prime}\left|v^1\right|^2 dx  =o(\la^{-2}).
	\end{equation}
	Using the definition of $\g$, Lemmas \ref{p3-1stlemps}, \ref{p3-2ndlemps} with $\ell =4$ and the fact that $\|v^3\|=O(|\la|^{-1})$, $\|v^5\|=O(|\la|^{-1})$, we obtain
	\begin{equation}\label{p3-4.34}
\displaystyle -\Re \left\{lk_1 \int_{\alpha+2\varepsilon}^{\beta-2\varepsilon} \g^{\prime}v^3 \overline{v^1}dx  \right\}=o(\la^{-2}), \ \ -\Re\left\{l^2 k_1 \int_{\alpha+2\varepsilon}^{\beta-2\varepsilon} \g^{\prime}v^5 \overline{v^1}dx  \right\}=o(\la^{-2}).
	\end{equation}
	Inserting \eqref{p3-4.33} and \eqref{p3-4.34} in \eqref{p3-4.32}, we obtain 
	\begin{equation}\label{p3-4.35}
		\mathtt{I}_4=-lk_1 \int_{\alpha+2\varepsilon}^{\beta-2\varepsilon} \g \left|v^1_x\right|^2 dx -\Re\left\{lk_1 \int_{\alpha+2\varepsilon}^{\beta-2\varepsilon} \g v^3 \overline{v^1_x}dx \right\}-\Re \left\{l^2 k_1 \int_{\alpha+2\varepsilon}^{\beta-2\varepsilon} \g v^5 \overline{v^1_x}dx  \right\}.+o(\la^{-2}).
	\end{equation}
	Moreover, from  Lemmas \ref{p3-1stlemps}, \ref{p3-2ndlemps} and the fact that $\ell=4$, we obtain 
	\begin{equation}\label{p3-4.36}
\Re \left\{l^2k_3 \int_{\alpha+2\varepsilon}^{\beta-2\varepsilon}\g (v^5_x -lv^1 )\overline{v^1}dx  \right\}=o(\la^{-4}) \ \ \text{and} \ \ \Re\left\{l^2d_0 \int_{\alpha+2\varepsilon}^{\beta-2\varepsilon}\f_3(v^6_x-lv^2)\overline{v^1}dx  \right\}=o(|\la|^{-3}).
	\end{equation}
	Inserting \eqref{p3-4.35} and \eqref{p3-4.36} in \eqref{p3-4.31}, we obtain \eqref{p3-4.37}.\\ \linebreak
	\textbf{Step 2:} In this step,  we will prove that:
		\begin{equation}\label{p3-4.43**}
	2	l\rho_1 \int_{\alpha+2\varepsilon}^{\beta-2\varepsilon} \g \left|\la v^1 \right|^2 dx=\Re\left\{i\la \rho_1 \int_{\alpha+2\varepsilon}^{\beta-2\varepsilon}\f_3^{\prime} v^6\overline{v^1}dx \right\}-\Re\left\{d_0 \int_{\alpha+2\varepsilon}^{\beta-2\varepsilon}\g (v^6_x-lv^2) \overline{v^1_{xx}}dx\right\}+o(\la^{-2}).
	\end{equation}
For this aim, multiplying \eqref{p3-f6ps} by $\g \overline{v^1_x }$, integrating over $(\alpha+2\varepsilon,\beta-2\varepsilon)$, using the fact that $v^1_x$ is uniformly bounded in $L^2(0,L)$ and  $\|f^6\|=o(1)$,  then taking the real part, we get 
	\begin{equation}\label{p3-4.38}
	\begin{array}{lll}
	\displaystyle \underbrace{\Re\left\{	i\la \rho_1 \int_{\alpha+2\varepsilon}^{\beta-2\varepsilon} \g v^6\overline{v^1_x}dx\right\}}_{:=\mathtt{I}_5} \  +\underbrace{\Re\left\{-\int_{\alpha+2\varepsilon}^{\beta-2\varepsilon} \g \left[k_3(v^5_x-lv^1) +d(x)(v^6_x-lv^2)\right]_x \overline{v^1_x }dx\right\}}_{:=\mathtt{I}_6}\vspace{0.25cm}\\ \displaystyle  +\, lk_1 \int_{\alpha+2\varepsilon}^{\beta-2\varepsilon} \g|v^1_x|^2dx +\Re\left\{ lk_1 \int_{\alpha+2\varepsilon}^{\beta-2\varepsilon} \g v^3\overline{v^1_x }dx\right\}+\Re\left\{ l^2k_1 \int_{\alpha+2\varepsilon}^{\beta-2\varepsilon} \g v^5\overline{v^1_x }dx\right\} =o(\la^{-4}).
	\end{array}
	\end{equation}
	Adding \eqref{p3-4.37} and \eqref{p3-4.38}, we obtain 
	\begin{equation}\label{p3-4.45*}
		l\rho_1 \int_{\alpha+2\varepsilon}^{\beta-2\varepsilon} \g \left|\la v^1 \right|^2 dx+\mathtt{I}_5+\mathtt{I}_6=o(\la^{-2}).
	\end{equation}
		Using integration by parts and the fact that $\g(\alpha+2\varepsilon)=\g(\beta-2\varepsilon)=0$, we obtain 
		\begin{equation}\label{p3-4.46***}
		\mathtt{I}_5=-\Re\left\{i\la \rho_1 \int_{\alpha+2\varepsilon}^{\beta-2\varepsilon}\f_3 v^6_x\overline{v^1}dx \right\}-\Re\left\{i\la \rho_1 \int_{\alpha+2\varepsilon}^{\beta-2\varepsilon}\f_3^{\prime} v^6\overline{v^1}dx \right\}.
		\end{equation}
			Now, it is easy to see that 
		\begin{equation*}
		\begin{array}{lll}
		\displaystyle	\Re\left\{-i\la \rho_1 \int_{\alpha+2\varepsilon}^{\beta-2\varepsilon}\f_2 v^6_x \overline{v^1}dx\right\}=\Re\left\{-i\la \rho_1 \int_{\alpha+2\varepsilon}^{\beta-2\varepsilon}\f_3(v^6_x-lv^2+lv^2) \overline{v^1}dx \right\}\vspace{0.25cm} \\ \hspace{3.5cm}\displaystyle =  \Re\left\{-i\la \rho_1 \int_{\alpha+2\varepsilon}^{\beta-2\varepsilon}\f_3 (v^6_x-lv^2) \overline{v^1}dx\right\}-\Re\left\{i\la \rho_1 l \int_{\alpha+2\varepsilon}^{\beta-2\varepsilon}\f_3v^2 \overline{v^1}dx\right\},
		\end{array}
		\end{equation*}
		using Lemma \ref{p3-1stlemps} and the fact that $\|v^1\|=O(|\la|^{-1})$, we get 
		\begin{equation*}
	\Re\left\{	-i\la \rho_1 \int_{\alpha+2\varepsilon}^{\beta-2\varepsilon}\f_3 v^6_x \overline{v^1}dx\right\}=\Re\left\{-i\la \rho_1 l \int_{\alpha+2\varepsilon}^{\beta-2\varepsilon}\f_3 v^2 \overline{v^1}dx\right\}+o(\la^{-2}).
		\end{equation*}
		Inserting $v^2=i\la v^1-\la^{-4}f^1$ in the above estimation, then 	using the fact that $\|v^1\|=O(|\la|^{-1})$ and $\|f^1\|=o(1)$, we get  
		\begin{equation*}
	\Re\left\{	-i\la \rho_1 \int_{\alpha+2\varepsilon}^{\beta-2\varepsilon}\f_3 v^6_x \overline{v^1}dx\right\}=l\rho_1 \int_{\alpha+2\varepsilon}^{\beta-2\varepsilon}\f_3 |\la v^1|^2dx  +o(\la^{-2}),
		\end{equation*}
		Inserting the above estimation in \eqref{p3-4.46***}, we obtain
		\begin{equation}\label{p3-4.50}
			\mathtt{I}_5=l\rho_1 \int_{\alpha+2\varepsilon}^{\beta-2\varepsilon}\f_3 |\la v^1|^2dx -\Re\left\{i\la \rho_1 \int_{\alpha+2\varepsilon}^{\beta-2\varepsilon}\f_3^{\prime} v^6\overline{v^1}dx \right\}+o(\la^{-2}).
			\end{equation}
			Now, Using integration by parts and the fact that $\g(\alpha+2\varepsilon)=\g(\beta-2\varepsilon)=0$, then using the definition of $d(x)$, we obtain 
	\begin{equation*}
		\begin{array}{lll}
         \mathtt{I}_6=  \displaystyle\Re\left\{\int_{\alpha+2\varepsilon}^{\beta-2\varepsilon} \g^{\prime} \left[k_3(v^5_x-lv^1) +d(x)(v^6_x-lv^2)\right] \overline{v^1_x }dx\right\}+\Re\left\{\int_{\alpha+2\varepsilon}^{\beta-2\varepsilon} \g \left[k_3(v^5_x-lv^1) +d(x)(v^6_x-lv^2)\right] \overline{v^1_{xx} }dx\right\}\vspace{0.25cm} 
          \\ 
          \hspace{0.5cm}\displaystyle =\, \Re\left\{k_3 \int_{\alpha+2\varepsilon}^{\beta-2\varepsilon} \g^{\prime}(v^5_x-lv^1) \overline{v^1_x }dx  \right\}   -\Re\left\{d_0 \int_{\alpha+2\varepsilon}^{\beta-2\varepsilon}\g^{\prime}(v^6_x-lv^2) \overline{v^1_x}dx\right\}\vspace{0.25cm}\\
          \hspace{1cm}\displaystyle
     +\, \Re\left\{k_3 \int_{\alpha+2\varepsilon}^{\beta-2\varepsilon} \g (v^5_x-lv^1) \overline{v^1_{xx}}dx \right\}+\Re\left\{d_0 \int_{\alpha+2\varepsilon}^{\beta-2\varepsilon}\g (v^6_x-lv^2) \overline{v^1_{xx}}dx\right\},
		\end{array}
	\end{equation*}
consequently, by using Lemma \ref{p3-1stlemps} with $\ell=4$ and the fact that $v^1_x$ is uniformly bounded in $L^2(0,L)$, $\|v^1_{xx}\|=O(|\la|)$, we get
	\begin{equation}\label{p3-4.51}
		\mathtt{I}_6	= \Re\left\{d_0 \int_{\alpha+2\varepsilon}^{\beta-2\varepsilon}\g (v^6_x-lv^2) \overline{v^1_{xx}}dx\right\}+o(\la^{-2}).
	\end{equation}
Thus,	by inserting \eqref{p3-4.50} and \eqref{p3-4.51} in \eqref{p3-4.45*}, we obtain \eqref{p3-4.43**}.\\ \linebreak
\textbf{Step 3:} In this step,  we will prove that:
\begin{equation}\label{p3-4.52}
\Re\left\{i\la \rho_1 \int_{\alpha+2\varepsilon}^{\beta-2\varepsilon}\f_3^{\prime} v^6\overline{v^1}dx \right\}=o(\la^{-2}).
\end{equation}
	For this aim, take $\ell=4$ in \eqref{p3-f6ps} and multiply it by $\f_3^\prime$$\overline{v^1}$, integrating over $(\alpha+2\varepsilon,\beta-2\varepsilon)$, using the fact that $\|v^1\|=O(|\la|^{-1})$,  $\|f^6\|=o(1)$,  then taking the real part, we get  
	\begin{equation}\label{p3-4.53}
		\begin{array}{lll}
		\displaystyle \Re \left\{i\la \rho_1 \int_{\alpha+2\varepsilon}^{\beta-2\varepsilon}\f_3^\prime v^6\overline{v^1}dx \right\}+\underbrace{\Re\left\{-\int_{\alpha+2\varepsilon}^{\beta-2\varepsilon} \g^{\prime} \left[k_3(v^5_x-lv^1) +d(x)(v^6_x-lv^2)\right]_x \overline{v^1 }dx\right\}}_{:=\mathtt{I}_7}\vspace{0.25cm}\\ \displaystyle +\frac{lk_1}{2}\int_{\alpha+2\varepsilon}^{\beta-2\varepsilon}\f_3^\prime \left(|v^1|^2\right)_x dx +\Re\left\{ lk_1 \int_{\alpha+2\varepsilon}^{\beta-2\varepsilon}\f_3^\prime (v^3+lv^5)\overline{v^1}dx \right\}=o(|\la|^{-5}),
		\end{array}
	\end{equation}
	using \eqref{p3-4.33}, Lemma \ref{p3-2ndlemps} and the fact that $\|v^3\|=O(|\la|^{-1})$, $\|v^5\|=O(|\la|^{-1})$, we obtain 
	$$
	\frac{lk_1}{2}\int_{\alpha+2\varepsilon}^{\beta-2\varepsilon}\f_3^\prime \left(\left|v^1\right|^2\right)_x dx=o(\la^{-2}) \ \ \text{and} \ \ \Re\left\{ lk_1 \int_{\alpha+2\varepsilon}^{\beta-2\varepsilon}\f_3^\prime (v^3+lv^5)\overline{v^1}dx \right\}=o(\la^{-2}).
	$$
	Consequently, \eqref{p3-4.53} implies
	\begin{equation}\label{p3-4.54}
	 \Re \left\{i\la \rho_1 \int_{\alpha+2\varepsilon}^{\beta-2\varepsilon}\f_3^\prime v^6\overline{v^1}dx \right\}+\mathtt{I}_7=o(\la^{-2}).
	\end{equation}
	Using integration by parts  and the fact that $\g^{\prime}(\alpha+2\varepsilon)=\g^{\prime}(\beta-2\varepsilon)=0$, then using Lemma \ref{p3-1stlemps} and the fact that $v^1_x$ is uniformly bounded in $L^2(0,L)$, $\|v^1\|=O(|\la|^{-1})$, we obtain
	 \begin{equation*}
	 \begin{array}{lll}
\displaystyle	\mathtt{I}_7= \Re\left\{\int_{\alpha+2\varepsilon}^{\beta-2\varepsilon} \g^{\prime\prime} \left[k_3(v^5_x-lv^1) +d(x)(v^6_x-lv^2)\right] \overline{v^1 }dx\right\}+\Re\left\{\int_{\alpha+2\varepsilon}^{\beta-2\varepsilon} \g^{\prime} \left[k_3(v^5_x-lv^1) +d(x)(v^6_x-lv^2)\right] \overline{v^1_x }dx\right\}\vspace{0.25cm}\\ \qquad \displaystyle =\, o(\la^{-2}).
	\end{array}
	 \end{equation*}
	 Therefore, from the above estimation and \eqref{p3-4.54}, we obtain \eqref{p3-4.52}.\\ \linebreak
	 \textbf{Step 4:} In this step, we will prove that:
	 \begin{equation}\label{p3-4.55}
	 \Re\left\{d_0 \int_{\alpha+2\varepsilon}^{\beta-2\varepsilon}\g (v^6_x-lv^2) \overline{v^1_{xx}}dx\right\}=-\Re \left\{ \frac{d_0\rho_1}{k_1}\la^2 \int_{\alpha+2\varepsilon}^{\beta-2\varepsilon}\g (v^6_x-lv^2) \overline{v^1}dx\right\}+o(\la^{-2}).
	 \end{equation}
For this aim, take $\ell=4$ in \eqref{p3-4.18*} and multiply it by $\dfrac{d_0}{k_1} \g (\overline{v^6_x}-l\overline{v^2})$, integrating over $(\alpha+2\varepsilon,\beta-2\varepsilon)$ and taking the real part, then using Lemmas \ref{p3-1stlemps} and the fact that $\|f^1\|=o(1)$, $\|f^2\|=o(1)$, we get 
	\begin{equation*}
\left\{	\begin{array}{lll}
	\displaystyle 	\Re \left\{ \frac{d_0\rho_1}{k_1}\la^2 \int_{\alpha+2\varepsilon}^{\beta-2\varepsilon}\g v^1 (\overline{v^6_x}-l\overline{v^2})dx\right\}+\Re \left\{d_0 \int_{\alpha+2\varepsilon}^{\beta-2\varepsilon}\g v^1_{xx}(\overline{v^6_x}-l\overline{v^2})dx \right\}+\Re\left\{d_0 \int_{\alpha+2\varepsilon}^{\beta-2\varepsilon}\g v^3_x (\overline{v^6_x}-l\overline{v^2})dx \right\}\vspace{0.25cm}\\
	\displaystyle  +\, \Re\left\{d_0 l \int_{\alpha+2\varepsilon}^{\beta-2\varepsilon}\g v^5_x (\overline{v^6_x}-l\overline{v^2})dx\right\}+\Re \left\{\frac{d_0 lk_3}{k_1}\int_{\alpha+2\varepsilon}^{\beta-2\varepsilon}\g (v^5_x-lv^1)(\overline{v^6_x}-l\overline{v^2})dx \right\}+\frac{ld_0^2}{k_1}\int_{\alpha+2\varepsilon}^{\beta-2\varepsilon}\f_3 |v^6_x-lv^2|^2dx\vspace{0.25cm}\\ \displaystyle \, =o(|\la|^{-5}),
		\end{array}
		\right.
	\end{equation*}
consequently, by using Lemma \ref{p3-1stlemps} and the fact that $v^3_x$ is uniformly bounded in $L^2(0,L)$, we get
	\begin{equation*}\label{p3-4.47}
		\Re \left\{d_0 \int_{\alpha+2\varepsilon}^{\beta-2\varepsilon}\g v^1_{xx}(\overline{v^6_x}-l\overline{v^2})dx \right\}=-\Re \left\{ \frac{d_0\rho_1}{k_1}\la^2 \int_{\alpha+2\varepsilon}^{\beta-2\varepsilon}\g v^1 (\overline{v^6_x}-l\overline{v^2})dx\right\}+o(\la^{-2}).
	\end{equation*}
	Thus, from the above estimation, we obtain \eqref{p3-4.55}.\\ \linebreak
	\textbf{Step 5:} In this step, we conclude the proof of \eqref{p3-4.35*}. For this aim, 
	inserting \eqref{p3-4.52} and \eqref{p3-4.55} in \eqref{p3-4.43**}, then using Young's inequality, Lemma \ref{p3-1stlemps} and the fact that $\ell =4$, we get 
	\begin{equation*}
	\begin{array}{lll}
\displaystyle	2l\rho_1  \int_{\alpha+2\varepsilon}^{\beta-2\varepsilon} \g \left|\la v^1 \right|^2 dx=	\Re \left\{ \frac{d_0\rho_1}{k_1}\la^2 \int_{\alpha+2\varepsilon}^{\beta-2\varepsilon}\g (v^6_x-lv^2) \overline{v^1}dx\right\}+o(\la^{-2})\vspace{0.25cm}
\\\hspace{5cm} \displaystyle 
\leq  \, \frac{d_0\rho_1}{k_1}\la^2 \int_{\alpha+2\varepsilon}^{\beta-2\varepsilon}\g|v^6_x-lv^2||v^1|dx +o(\la^{-2})\vspace{0.25cm}
\\\hspace{5cm} \displaystyle = \int_{\alpha+2\varepsilon}^{\beta-2\varepsilon}\left(\frac{d_0 \sqrt{\rho_1}}{k_1 \sqrt{l}}|\la|\sqrt{\g}|v^6_x-lv^2|\right)\left(\sqrt{l\rho_1}|\la|\sqrt{\g}|v^1|\right)dx +o(\la^{-2})\vspace{0.25cm}
\\ \hspace{5cm}\displaystyle \leq \,   \underbrace{\frac{\rho_1 d_0^2}{2k_1^2 l}\la^2 \int_{\alpha+2\varepsilon}^{\beta-2\varepsilon}\g|v^6_x-lv^2|^2 dx}_{=o(\la^{-2})}+\frac{l\rho_1 }{2}\int_{\alpha+2\varepsilon}^{\beta-2\varepsilon}\g|\la v^1|^2 dx +o(\la^{-2}),
	\end{array}
	\end{equation*}
consequently, we obtain
$$
\frac{3l\rho_1 }{2} \int_{\alpha+2\varepsilon}^{\beta-2\varepsilon} \g \left|\la v^1 \right|^2 dx=o(\la^{-2}).
$$
Finally, from the above estimation and the definition of $\g$, we obtain \eqref{p3-4.35*}. The proof is thus complete.
\end{proof}
\begin{lem}\label{p3-5thlemma}
	{\rm If  $\frac{k_1}{\rho_1}\neq \frac{k_2}{\rho_2}$ and $\ell=4$. Then, the solution $U=(v^1,v^2,v^3,v^4,v^5,v^6)^{\top}\in D(\AA)$ of system \eqref{p3-f1ps}-\eqref{p3-f6ps} satisfies the following estimations 
		\begin{equation}\label{p3-4.40}
		\int_{\alpha+4\varepsilon}^{\beta-4\varepsilon}\left| v^3_x\right|^2 dx=o(1) \quad \text{and}\quad   \int_{\alpha+5\varepsilon}^{\beta-5\varepsilon}\left|\la v^3 \right|^2 dx = o(1).
		\end{equation}
	}
\end{lem}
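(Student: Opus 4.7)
The plan is to mimic the structure of the proof of Lemma~\ref{p3-3rdlem} (the $k_1/\rho_1 = k_2/\rho_2$ case), but where that argument relied on a cancellation between two equations to eliminate troublesome $v^1_{xx}\overline{v^3_x}$ terms, I will instead exploit the much stronger estimate $\int_{\alpha+3\varepsilon}^{\beta-3\varepsilon}|\la v^1|^2\,dx = o(\la^{-2})$ from Lemma~\ref{p3-4thlemma} to treat each equation separately. The cutoffs $\f_4$ and $\f_5$ will be used; crucially, both $\mathrm{supp}\,\f_4'$ and $\mathrm{supp}\,\f_5'$ lie inside $(\alpha+3\varepsilon,\beta-3\varepsilon)\subset(\alpha+2\varepsilon,\beta-2\varepsilon)$, so that Lemmas~\ref{p3-1stlemps}, \ref{p3-2ndlemps}, and \ref{p3-4thlemma} all apply on the relevant supports.

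For the first estimate, I take $\ell=4$ in \eqref{p3-4.18*}, multiply by $\rho_1^{-1}\f_4\overline{v^3_x}$, and integrate over $(\alpha+3\varepsilon,\beta-3\varepsilon)$. Expanding $(v^1_x+v^3+lv^5)_x = v^1_{xx}+v^3_x+lv^5_x$, Lemma~\ref{p3-1stlemps} disposes of all the terms involving $v^5_x$, $v^5_x-lv^1$, and $v^6_x-lv^2$, while Lemma~\ref{p3-4thlemma} gives $\|\la^2 v^1\|_{L^2(\mathrm{supp}\,\f_4)} = o(1)$, so that $|\la^2\int\f_4 v^1\overline{v^3_x}\,dx| = o(1)$. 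This yields
\begin{equation*}
\frac{k_1}{\rho_1}\int\f_4|v^3_x|^2\,dx = -\frac{k_1}{\rho_1}\int\f_4 v^1_{xx}\overline{v^3_x}\,dx + o(1).
\end{equation*}
To dispose of the remaining integral, I multiply the conjugate of \eqref{p3-4.18**} (with $\ell=4$) by $\rho_2^{-1}\f_4 v^1_x$ and integrate by parts in the terms $\int\f_4 v^1_x\overline{v^3}\,dx$ and $\int\f_4 v^1_x\overline{v^3_{xx}}\,dx$, producing expressions in $v^1\overline{v^3_x}$, $v^1\overline{v^3}$, $v^1_x\overline{v^3_x}$, and $v^1_{xx}\overline{v^3_x}$ plus $\f_4'$-supported corrections. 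The $\la^2$-weighted terms are $o(1)$ by Lemma~\ref{p3-4thlemma}, the $\f_4'$-supported terms carrying a factor $v^1_x$ are $o(1)$ by Lemma~\ref{p3-2ndlemps}, and the bulk term $\int\f_4 v^1_x(\overline{v^1_x+v^3+lv^5})\,dx$ is again $o(1)$ by Lemma~\ref{p3-2ndlemps}. This gives $\int\f_4 v^1_{xx}\overline{v^3_x}\,dx = o(1)$, and substitution above produces the first estimate of \eqref{p3-4.40}.

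For the second estimate, I multiply \eqref{p3-4.18**} by $\f_5\overline{v^3}$ and integrate; integration by parts on $\int\f_5 v^3_{xx}\overline{v^3}\,dx$ converts it to $-\int\f_5|v^3_x|^2\,dx$ plus a $\f_5'$ correction that is $o(1)$ using $\|v^3\|=O(|\la|^{-1})$. The first estimate just proved gives $\int\f_5|v^3_x|^2\,dx = o(1)$ (since $\mathrm{supp}\,\f_5 \subset [\alpha+4\varepsilon,\beta-4\varepsilon]$), and the coupling term $\int\f_5(v^1_x+v^3+lv^5)\overline{v^3}\,dx = O(|\la|^{-1}) = o(1)$. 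Collecting yields $\rho_2\int\f_5|\la v^3|^2\,dx = o(1)$, which gives the second estimate.

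The main obstacle is the bookkeeping in the second step: in Lemma~\ref{p3-3rdlem} the cancellation produced by $k_1/\rho_1 = k_2/\rho_2$ was what made an $O(|\la|)$-large quantity collapse to $o(1)$; here one must instead verify that Lemma~\ref{p3-4thlemma} alone provides enough decay on $\la v^1$ to render every $\la^2$-weighted term individually $o(1)$, without any hypothesis on the wave speeds.
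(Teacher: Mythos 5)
Your proposal is correct and follows essentially the same route as the paper: both proofs multiply \eqref{p3-4.18*} by a cutoff times $\overline{v^3_x}$ and \eqref{p3-4.18**} by the same cutoff times $v^1_x$, use Lemma \ref{p3-4thlemma} to make the $\la^2$-weighted cross terms $o(1)$ (which is precisely what removes the need for $k_1/\rho_1=k_2/\rho_2$), eliminate the $v^1_{xx}\overline{v^3_x}$ term by combining the two identities, and then obtain the $\la v^3$ estimate by testing \eqref{p3-4.18**} against $\f_5\overline{v^3}$. The only differences are cosmetic (normalizing by $\rho_i$ rather than $k_i$, and substituting rather than adding the two identities).
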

\begin{proof}
	First, take $\ell=4$ in \eqref{p3-4.18*} and multiply it by $k_1^{-1}\f_4 \overline{v^3_x}$, integrating over $(\alpha+3\varepsilon,\beta-3\varepsilon)$, using the definition of $d(x)$ and the fact that $v^3_x$ is uniformly bounded in $L^2 (0,L)$, $\|f^1\|=o(1)$, $\|f^2\|=o(1)$, then taking the real part, we obtain  
\begin{equation*}
\begin{array}{lll}
\displaystyle  \Re\left\{	\frac{\la^2\rho_1}{k_1} \int_{\alpha+3\varepsilon}^{\beta-3\varepsilon}\f_4 v^1 \overline{v^3_x} dx+\int_{\alpha+3\varepsilon}^{\beta-3\varepsilon}\f_4 v^1_{xx}\overline{v^3_x}dx+\int_{\alpha+3\varepsilon}^{\beta-3\varepsilon}\f_4 |v^3_x|^2 dx +l\int_{\alpha+3\varepsilon}^{\beta-3\varepsilon}\f_4 v^5_x \overline{v^3_x}dx\right.\vspace{0.25cm}\\ \hspace{1cm} \displaystyle \left. +\, \frac{lk_3}{k_1}\int_{\alpha+3\varepsilon}^{\beta-3\varepsilon}\f_4 (v^5_x-lv^1)\overline{v^3_x}dx +\frac{ld_0}{k_1}\int_{\alpha+3\varepsilon}^{\beta-3\varepsilon}\f_4(v^6_x-lv^2)\overline{v^3_x}dx \right\} =o(|\la|^{-3}),
\end{array}
\end{equation*}
consequently, from  Lemmas \ref{p3-1stlemps}, \ref{p3-4thlemma} with $\ell=4$ and the fact that $v^3_x$ is uniformly bounded in $L^2(0,L)$, we obtain 
\begin{equation}\label{p3-4.60}
\Re\left\{	\int_{\alpha+3\varepsilon}^{\beta-3\varepsilon}\f_4 v^1_{xx}\overline{v^3_x}dx\right\}+\int_{\alpha+3\varepsilon}^{\beta-3\varepsilon}\f_4 |v^3_x|^2 dx=o(1).
\end{equation}
Now, take $\ell=4$ in \eqref{p3-4.18**} and multiply it by $k_2^{-1}\f_4 \overline{v^1_x}$, integrating over $(\alpha+3\varepsilon,\beta-3\varepsilon)$ and integrating by parts, using  the fact that $v^1_x$ is uniformly bounded in $L^2 (0,L)$ and $\|f^3\|=o(1)$, $\|f^4\|=o(1)$, then taking the real part, we obtain  
\begin{equation*}
\begin{array}{lll}
\displaystyle \Re \left\{ 	-\frac{\la^2\rho_2}{k_2}\int_{\alpha+3\varepsilon}^{\beta-3\varepsilon}\f_4v^3_x\overline{v^1}dx-\frac{\la^2\rho_2}{k_2}\int_{\alpha+3\varepsilon}^{\beta-3\varepsilon}\f_4^\prime v^3\overline{v^1}dx -\int_{\alpha+3\varepsilon}^{\beta-3\varepsilon}\f_4v^3_x \overline{v^1_{xx}}dx - \int_{\alpha+3\varepsilon}^{\beta-3\varepsilon}\f_4^\prime v^3_x \overline{v^1_x}dx\right.\vspace{0.25cm} \\ \hspace{1cm}\left. \displaystyle -\, \frac{k_1}{k_2} \int_{\alpha+3\varepsilon}^{\beta-3\varepsilon}\f_4(v^1_x+v^3+lv^5)\overline{v^1_x}dx\right\}=o(|\la|^{-3}),
\end{array}
\end{equation*}
consequently, from Lemmas \ref{p3-2ndlemps}, \ref{p3-4thlemma} with $\ell=4$ and the fact that $v^3_x$, $(v^1_x+v^3+lv^5)$ are uniformly bounded in $L^2 (0,L)$ and $\|v^3\|=O(|\la|^{-1})$, we obtain 
\begin{equation}\label{p3-4.61}
	\Re\left\{ -\int_{\alpha+3\varepsilon}^{\beta-3\varepsilon}\f_4v^3_x \overline{v^1_{xx}}dx\right\}=o(1).
\end{equation}
Adding \eqref{p3-4.60} and \eqref{p3-4.61}, then using the definition of $\f_4$, we obtain the first estimation in \eqref{p3-4.40}. Next, take $\ell=4$ in \eqref{p3-4.18**} and multiply it by $\f_5\overline{v^3}$, integrating over $(\alpha+4\varepsilon,\beta-4\varepsilon)$ and integrating by parts, then using the fact that $\|v^3\|=O(|\la|^{-1})$, $\|f^3\|=o(1)$, $\|f^4\|=o(1)$, we obtain 
\begin{equation*}
	\rho_2 \int_{\alpha+4\varepsilon}^{\beta-4\varepsilon}\f_5 |\la v^3|^2dx = k_2 \int_{\alpha+4\varepsilon}^{\beta-4\varepsilon}\f_5 |v^3_x|^2 dx +k_2 \int_{\alpha+4\varepsilon}^{\beta-4\varepsilon}\f_5^\prime v^3_x \overline{v^3}dx +k_1 \int_{\alpha+4\varepsilon}^{\beta-4\varepsilon}\f_5(v^1_x+v^3+lv^5)\overline{v^3}dx+o(\la^{-4}).
\end{equation*}
From the above estimation, the first estimation in \eqref{p3-4.40} and the fact that  $(v^1_x+v^3+lv^5)$ is uniformly bounded in $L^2 (0,L)$ and $\|v^3\|=O(|\la|^{-1})$, we obtain
$$
\rho_2 \int_{\alpha+4\varepsilon}^{\beta-4\varepsilon}\f_5|\la v^3|^2 dx=o(1).
$$
Finally, from the above estimation and the definition of $\f_5$, we obtain the second estimation in \eqref{p3-4.40}. The proof is thus complete.
\end{proof}

\begin{lem}\label{p3-6thlemma}
	{\rm  Let $\h \in C^1([0,L])$ such that $\h (0)=\h(L)=0$. If $\left(\frac{k_1}{\rho_1}=\frac{k_2}{\rho_2}\ \ \text{and} \ \ \ell=2\right)$ or $\left(\frac{k_1}{\rho_1}\neq \frac{k_2}{\rho_2} \ \ \text{and} \ \ \ell=4\right)$, 
		 then the solution $U=(v^1,v^2,v^3,v^4,v^5,v^6)^{\top}\in D(\AA)$ of system \eqref{p3-f1ps}-\eqref{p3-f6ps} satisfies the following estimation
		 	\begin{equation*}\label{p3-4.41}
		 \intdx \h^{\prime} \left(\rho_1\left|\la v^1\right|^2+k_1\left|v^1_x \right|^2+\rho_2\left| \la v^3 \right|^2 +k_2 \left|v^3_x\right|^2+\rho_1 \left|\la v^5\right|^2+k_3^{-1}\left|k_3 v^5_x +d(x)(v^6_x-lv^2)\right|^2 \right) dx=o(1).
		 \end{equation*}
	}
\end{lem}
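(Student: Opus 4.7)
Set $Z := k_3 v^5_x + d(x)(v^6_x - lv^2)$, which belongs to $H^1(0,L)$ by the definition of $D(\AA)$. The plan is to multiply \eqref{p3-4.18*}, \eqref{p3-4.18**} and \eqref{p3-4.44*} by $2\h\overline{v^1_x}$, $2\h\overline{v^3_x}$ and $2\h\overline{Z}/k_3$ respectively, integrate over $(0,L)$, sum, and take the real part. Using $\overline{Z}$ as the multiplier for the third equation is forced on us: $v^5_{xx}$ need not lie in $L^2$, whereas $Z_x$ does by the domain condition.

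For each equation, the $\la^2$-mass term together with the principal spatial term produces, after one integration by parts using $\h(0)=\h(L)=0$ and the Dirichlet conditions $v^1(0)=v^1(L)=v^3(0)=v^3(L)=v^5(0)=v^5(L)=0$, exactly the six contributions appearing in the claim. Explicitly, $\Re\int_0^L 2\la^2\rho_1 v^1\h\overline{v^1_x}\,dx = -\int_0^L \h'\rho_1|\la v^1|^2\,dx$ and $\Re\int_0^L 2k_1 v^1_{xx}\h\overline{v^1_x}\,dx = -\int_0^L \h' k_1|v^1_x|^2\,dx$, and analogously for the $v^3$ equation. For the third equation, $\Re\int_0^L (2/k_3)\h\overline{Z}Z_x\,dx = -\int_0^L (\h'/k_3)|Z|^2\,dx$, while writing $\overline{Z}/k_3 = \overline{v^5_x} + d(x)(\overline{v^6_x}-l\overline{v^2})/k_3$ in the mass term gives the desired $-\int_0^L \h'\rho_1|\la v^5|^2\,dx$ plus a remainder bounded by $C|\la|^2\|v^5\|\,\|v^6_x-lv^2\|_{L^2(\alpha,\beta)} = o(|\la|^{1-\ell/2}) = o(1)$ thanks to Lemma \ref{p3-1stlemps} and \eqref{p3-v135}.

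The residual cross terms split into three families: (i) couplings through $v^1_x+v^3+lv^5$ and $lk_3(v^5_x-lv^1)$, where antisymmetric pairings such as $\Re(v^3_x\overline{v^1_x}) - \Re(v^1_x\overline{v^3_x}) = 0$ cancel outright in the sum, and the leftover pieces are handled by the interior smallness of $v^1, v^3, v^5, \la v^1, \la v^3$ on $(\alpha,\beta)$ established in Lemmas \ref{p3-2ndlemps}--\ref{p3-5thlemma} together with the a priori decay \eqref{p3-v135}; (ii) damping cross terms like $ld(x)(v^6_x-lv^2)\cdot 2\h\overline{v^1_x}$, controlled by Cauchy--Schwarz from $\|v^1_x\| = O(1)$ and $\|v^6_x-lv^2\|_{L^2(\alpha,\beta)} = o(|\la|^{-\ell/2})$; (iii) the forcing contributions from $F$, each of the form $O(|\la|^{-\ell})\cdot O(1) = o(1)$ since $\|v^i_x\|$ and $\|Z\|$ are uniformly bounded.

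The main obstacle is handling the damping terms without ever differentiating the discontinuous coefficient $d$. The device is to keep $d(x)(v^6_x-lv^2)$ glued to $k_3 v^5_x$ inside $Z$ whenever a spatial derivative must be transferred, so that only the regular quantity $Z_x\in L^2(0,L)$ ever appears; the surviving $d$-dependent cross terms are then $o(1)$ by the damping estimate of Lemma \ref{p3-1stlemps}. Summing the three identities and collecting all remainder estimates yields precisely the stated formula.
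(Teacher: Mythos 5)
Your proposal is correct and follows essentially the same route as the paper: the same three multipliers $2\h\overline{v^1_x}$, $2\h\overline{v^3_x}$ and $2k_3^{-1}\h\overline{\mathsf{S}}$ (your $Z$ is the paper's $\mathsf{S}$), the same antisymmetric cancellation of the $v^1_x$--$v^3_x$ and $v^1_x$--$v^5_x$ cross terms after summation, and the same use of Lemma \ref{p3-1stlemps} to absorb the $d$-dependent remainders while never differentiating $d$. The only cosmetic difference is that you invoke the interior-smallness Lemmas \ref{p3-2ndlemps}--\ref{p3-5thlemma} for the leftover couplings, whereas the global bounds $\|v^3\|=O(|\la|^{-1})$, $\|v^5\|=O(|\la|^{-1})$ from \eqref{p3-v135} already suffice, as in the paper.
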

\begin{proof} 
	First, multiplying \eqref{p3-4.18*} by $2\h \overline{v^1_x}$, integrating over $(0,L)$, taking the real part, then using Lemma \ref{p3-1stlemps}, the fact that $v^1_x$ is uniformly bounded in $L^2(0,L)$, $\|v^1\|=O(|\la|^{-1})$, $\|f^1\|=o(1)$ and $\|f^2\|=o(1)$, we obtain 
	\begin{equation}\label{p3-4.42*}
	\begin{array}{lll}
\displaystyle 	\intdx \h \left(\rho_1\left|\la v^1\right|^2+k_1\left|v^1_x \right|^2 \right)_x dx +\Re \left\{2k_1 \intdx \h v^3_x \overline{v^1_x}dx  \right\}+\Re\left\{2l(k_1+k_3)\intdx \h v^5_x \overline{v^1_x}dx \right\}\vspace{0.25cm}\\ \displaystyle
\underbrace{	-\Re\left\{2l^2k_3 \intdx \h v^1\overline{v^1_x}dx  \right\}}_{=o(1)}+\underbrace{\Re\left\{2ld_0 \int_{\alpha}^{\beta}\h (v^6_x-lv^2)\overline{v^1_x}dx \right\}}_{=o\left(|\la|^{-\frac{\ell}{2}}\right)}=o(|\la|^{-\ell+1}).
	\end{array}
	\end{equation}
	Now,  multiplying \eqref{p3-4.18**} by $2\h \overline{v^3_x}$, integrating over $(0,L)$, taking the real part, then using the fact that $v^3_x$ is uniformly bounded in $L^2(0,L)$, $\|v^3\|=O(|\la|^{-1})$, $\|v^5\|=O(|\la|^{-1})$, $\|f^3\|=o(1)$ and $\|f^4\|=o(1)$, we obtain 
	 \begin{equation}\label{p3-4.43*}
	 	\intdx \h \left(\left|\rho_2 \la v^3 \right|^2 +k_2 \left|v^3_x\right|^2\right)_x dx -\Re\left\{2k_1 \intdx \h v^1_x \overline{v^3_x}dx  \right\}\underbrace{-\Re \left\{2k_1\intdx\h (v^3+lv^5)\overline{v^3_x }dx  \right\}}_{=o(1)}=o(|\la|^{1-\ell}).
	 \end{equation}
Let $\mathsf{S}:=k_3 v^5_x +d(x)(v^6_x-lv^2)$, from Lemma \ref{p3-1stlemps}, the definition of $d(x)$ and the fact that $v^5_x$ is uniformly bounded in $L^2 (0,L)$, we get $\s$ is uniformly bounded in $L^2 (0,L)$.	Now, multiplying \eqref{p3-4.44*} by $2k_3^{-1}\h \overline{\mathsf{S}}$, integrating over $(0,L)$, taking the real part, then using the fact that $\|v^3\|=O(|\la|^{-1})$, $\|v^5\|=O(|\la|^{-1})$, $\|f^5\|=o(1)$ and $\|f^6\|=o(1)$, we obtain
	\begin{equation}\label{p3-4.46}
\begin{array}{lll}
\displaystyle \Re\left\{\frac{2\la^2 \rho_1}{k_3} \intdx \h v^5 \overline{\mathsf{S}}dx  \right\}+k_3^{-1}\intdx \h \left(\left|\s\right|^2\right)_x dx -\Re \left\{\frac{2l(k_1+k_3)}{k_3}\intdx \h v^1_x \overline{\s}dx \right\}\vspace{0.25cm}\\ 
\displaystyle \underbrace{-\Re \left\{\frac{2lk_1}{k_3} \intdx \h (v^3+lv^5)\overline{\s}dx  \right\}}_{=o(1)}=\underbrace{\Re\left\{2k_3^{-1}\intdx \h \left(-\rho_1 \la^{-\ell}f^6 -i\la^{1-\ell} \rho_1 f^5\right)\overline{\s}dx \right\}}_{=o\left(\left|\la\right|^{-\ell+1}\right)}.
\end{array}
	\end{equation}
Moreover, from the definition of $\s$ and $d(x)$, Lemma \ref{p3-1stlemps} and the fact that $v^1_x$ is uniformly bounded in $L^2 (0,L)$, $\|v^5\|=O(|\la|^{-1})$, we obtain  
	$$
\left\{\begin{array}{lll}
\displaystyle	\Re\left\{\frac{2\la^2 \rho_1}{k_3} \intdx \h v^5 \overline{\mathsf{S}}dx  \right\}=\la^2 \rho_1 \intdx \h \left(\left|v^5\right|^2\right)_x dx +\underbrace{\Re \left\{ \frac{2\la^2 \rho_1 d_0 }{k_3}\intdnd \h v^5 (\overline{v^6_x}-l\overline{v^2})dx  \right\}}_{=o\left(\left|\la\right|^{-\frac{\ell}{2}+1}\right)},\vspace{0.25cm}\\
\displaystyle 
	-\Re \left\{\frac{2l(k_1+k_3)}{k_3}\intdx \h v^1_x \overline{\s}dx \right\}=-\Re \left\{2l(k_1+k_3)\intdx \h v^1_x \overline{v^5_x }dx \right\}\underbrace{-\Re \left\{\frac{2l(k_1+k_3)d_0}{k_3}\intdnd \h v^1_x (\overline{v^6_x }-l\overline{v^2})dx \right\}}_{=o\left(\left|\la\right|^{-\frac{\ell}{2}}\right)}.
	\end{array}
	\right.
	$$
	Inserting the above estimations in \eqref{p3-4.46} and using the fact that $\ell \in \{2,4\}$, we obtain 
	\begin{equation}\label{p3-4.47*}
	\intdx \h \left(\rho_1 \left|\la v^5\right|^2+k_3^{-1}\left|\s\right|^2 \right)_x dx -\Re \left\{2l(k_1+k_3)\intdx \h v^1_x \overline{v^5_x }dx \right\}=o(1).
	\end{equation}
	Adding \eqref{p3-4.42*}, \eqref{p3-4.43*}, \eqref{p3-4.47*} and using the fact that $\ell \in \{2,4\}$, then using integration by parts, we obtain  \eqref{p3-4.41}. The proof is thus complete.
	\end{proof}
	\begin{lem}
		{\rm
	 The solution $U=(v^1,v^2,v^3,v^4,v^5,v^6)^{\top}\in D(\AA)$ of system \eqref{p3-f1ps}-\eqref{p3-f6ps} satisfies the following estimations
		\begin{equation}\label{p3-4.63}
		\mathsf{J}(\alpha+4\varepsilon,\beta-4\varepsilon)=o(1) \qquad \text{if} \qquad  \frac{k_1}{\rho_1}=\frac{k_2}{\rho_2} \ \ \text{and} \ \ \ell=2,
		\end{equation}
			\begin{equation}\label{p3-4.64}
		\mathsf{J}(\alpha+5\varepsilon,\beta-5\varepsilon)=o(1) \qquad \text{if} \qquad  \frac{k_1}{\rho_1}\neq\frac{k_2}{\rho_2} \ \ \text{and} \ \ \ell=4,
		\end{equation}
		where 
		\begin{equation*}
	\begin{array}{lll}
	\displaystyle \mathsf{J}(\gamma_1,\gamma_2):=	\int_{0}^{\gamma_1} \left(\rho_1\left|\la v^1\right|^2+k_1\left|v^1_x \right|^2+\rho_2\left| \la v^3 \right|^2 +k_2 \left|v^3_x\right|^2 +\rho_1 |\la v^5|^2 \right) dx +k_3\int_{0}^{\alpha} |v^5_x|^2 dx  \vspace{0.25cm}\\
\hspace{2cm}	\displaystyle
	+	\int_{\gamma_2}^{L} \left(\rho_1\left|\la v^1\right|^2+k_1\left|v^1_x \right|^2+\rho_2\left| \la v^3 \right|^2 +k_2 \left|v^3_x\right|^2+\rho_1 \left|\la v^5\right|^2 \right) dx +k_3\int_{\beta}^{L} |v^5_x|^2 dx,
	\end{array}
	\end{equation*}
	for all $0<\alpha<\gamma_1<\gamma_2<\beta<L$.
}
	\end{lem}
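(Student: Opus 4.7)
My plan is to apply the multiplier identity of Lemma \ref{p3-6thlemma} to a carefully designed cut-off $\h$ so that, on the two boundary strips $[0,\gamma_1]$ and $[\gamma_2,L]$, the non-negative integrand dominates the $\mathsf{J}$-integrand pointwise, while the residual contribution over $(\gamma_1,\gamma_2)$ is absorbed by the interior estimates already proved in Lemmas \ref{p3-1stlemps}--\ref{p3-5thlemma}. Specifically, I would take $\gamma_1=\alpha+4\varepsilon$, $\gamma_2=\beta-4\varepsilon$ in the case $\frac{k_1}{\rho_1}=\frac{k_2}{\rho_2}$ (so $\ell=2$), and $\gamma_1=\alpha+5\varepsilon$, $\gamma_2=\beta-5\varepsilon$ otherwise (so $\ell=4$); these are precisely the constants used to build $\q_1$ and $\q_2$. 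The multiplier to plug into Lemma \ref{p3-6thlemma} is
$$
\h(x):=x\,\q_1(x)-(L-x)\,\q_2(x),\qquad x\in[0,L],
$$
which lies in $C^1([0,L])$, vanishes at $x=0$ and $x=L$, satisfies $\h'(x)=1$ on $[0,\gamma_1]\cup[\gamma_2,L]$, and is merely a bounded function on the intermediate interval $[\gamma_1,\gamma_2]\subset(\alpha,\beta)$.

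Writing $(\star)$ for the integrand of Lemma \ref{p3-6thlemma} and splitting $(0,L)$ into the three subintervals $(0,\gamma_1)$, $(\gamma_1,\gamma_2)$, $(\gamma_2,L)$, that identity becomes
$$
\int_0^{\gamma_1}(\star)\,dx+\int_{\gamma_1}^{\gamma_2}\h'(x)(\star)\,dx+\int_{\gamma_2}^{L}(\star)\,dx=o(1).
$$
On $[0,\alpha]\cup[\beta,L]$ the function $d$ vanishes, so $k_3^{-1}|k_3v^5_x+d(x)(v^6_x-lv^2)|^2=k_3|v^5_x|^2$, which reproduces exactly the two ``extra'' $v^5_x$ contributions of $\mathsf{J}$; on $[\alpha,\gamma_1]\cup[\gamma_2,\beta]$ this same expression is merely non-negative, and all other terms of $(\star)$ coincide with the corresponding terms of the $\mathsf{J}$-integrand. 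Consequently $(\star)$ majorises the $\mathsf{J}$-integrand termwise on the two boundary strips, yielding
$$
\mathsf{J}(\gamma_1,\gamma_2)\ \le\ \int_0^{\gamma_1}(\star)\,dx+\int_{\gamma_2}^{L}(\star)\,dx\ =\ -\int_{\gamma_1}^{\gamma_2}\h'(x)(\star)\,dx+o(1).
$$

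It then remains to prove that $\int_{\gamma_1}^{\gamma_2}\h'(\star)\,dx=o(1)$. Since $\|\h'\|_{L^\infty}<\infty$, it suffices to check that every piece of $(\star)$ has $L^1([\gamma_1,\gamma_2])$-norm equal to $o(1)$: the terms $|v^1_x|^2,|\la v^1|^2$ are handled by Lemmas \ref{p3-2ndlemps} and \ref{p3-4thlemma}, the terms $|v^3_x|^2,|\la v^3|^2$ by Lemmas \ref{p3-3rdlem} and \ref{p3-5thlemma}, and $|\la v^5|^2$ by Lemma \ref{p3-2ndlemps*}. For the remaining $\mathsf{S}$-term, the inequality $(a+b)^2\le 2a^2+2b^2$ together with the third and fourth estimates of Lemma \ref{p3-1stlemps} gives
$$
\frac{1}{k_3}\int_{\gamma_1}^{\gamma_2}|k_3v^5_x+d_0(v^6_x-lv^2)|^2\,dx\ \le\ 2k_3\int_\alpha^\beta|v^5_x|^2\,dx+\frac{2d_0^2}{k_3}\int_\alpha^\beta|v^6_x-lv^2|^2\,dx=o(1).
$$
The main obstacle is therefore just the bookkeeping: one must verify in each regime that $[\gamma_1,\gamma_2]$ is contained in every interior window $[\alpha+j\varepsilon,\beta-j\varepsilon]$ invoked above, which forces $j$ up to $4$ in the first case and up to $5$ in the second and explains the two different widths appearing in \eqref{p3-4.63}--\eqref{p3-4.64}. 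No additional multiplier identity is required; the weight $\h=x\q_1-(L-x)\q_2$ is the one $C^1$ function vanishing at the boundary whose derivative equals $+1$ on both outer strips and whose transition is confined to the damped region.
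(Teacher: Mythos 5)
Your proof is correct and follows essentially the same route as the paper: the identical multiplier $\h=x\q_1+(x-L)\q_2$ is plugged into Lemma \ref{p3-6thlemma}, the two strips where $\h'=1$ are isolated, and the remainder over $(\gamma_1,\gamma_2)$ together with the $d_0$-part of the $\mathsf{S}$-term is absorbed by Lemmas \ref{p3-1stlemps}--\ref{p3-5thlemma}; the only (immaterial) difference is that the paper keeps an exact identity while you use a one-sided inequality plus the nonnegativity of $\mathsf{J}$. One trivial slip: your bound on the $\mathsf{S}$-term invokes the first and fourth estimates of Lemma \ref{p3-1stlemps} (for $v^6_x-lv^2$ and $v^5_x$), not the third and fourth.
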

\begin{proof}
First, take $\h =x\q_1+(x-L)\q_2$ in \eqref{p3-4.41}, then using the definition of $d(x)$ and the fact that $0<\alpha<\gamma_1<\gamma_2<\beta<L$, we obtain
		\begin{equation*}
	\begin{array}{lll}
\displaystyle	\int_{0}^{\gamma_1} \left(\rho_1\left|\la v^1\right|^2+k_1\left|v^1_x \right|^2+\rho_2\left| \la v^3 \right|^2 +k_2 \left|v^3_x\right|^2 +\rho_1 |\la v^5|^2 \right) dx +k_3\int_{0}^{\alpha} |v^5_x|^2 dx  \vspace{0.25cm}\\
\displaystyle
+	\int_{\gamma_2}^{L} \left(\rho_1\left|\la v^1\right|^2+k_1\left|v^1_x \right|^2+\rho_2\left| \la v^3 \right|^2 +k_2 \left|v^3_x\right|^2+\rho_1 \left|\la v^5\right|^2 \right) dx +k_3\int_{\beta}^{L} |v^5_x|^2 dx\vspace{0.25cm}\\ 
\displaystyle = -\int_{\gamma_1}^{\gamma_2}\left(\q_1+x\q_1^{\prime}\right)\left(\rho_1\left|\la v^1\right|^2+k_1\left|v^1_x \right|^2+\rho_2\left| \la v^3 \right|^2 +k_2 \left|v^3_x\right|^2+\rho_1 \left|\la v^5\right|^2+k_3^{-1}\left|k_3v^5_x+d_0 (v^6_x-lv^2)\right|^2 \right) dx\vspace{0.25cm}\\
\hspace{0.5cm}\displaystyle  -\int_{\gamma_1}^{\gamma_2}\left(\q_2+(x-L)\q_2^{\prime}\right)\left(\rho_1\left|\la v^1\right|^2+k_1\left|v^1_x \right|^2+\rho_2\left| \la v^3 \right|^2 +k_2 \left|v^3_x\right|^2+\rho_1 \left|\la v^5\right|^2+k_3^{-1}\left|k_3v^5_x+d_0 (v^6_x-lv^2)\right|^2 \right) dx\vspace{0.25cm}\\
\hspace{0.5cm}\displaystyle +k_3^{-1}\int_{\alpha}^{\gamma_2} \q_1 |k_3v^5_x+d_0(v^6_x-lv^2)|^2dx
 +k_3^{-1}\int_{\gamma_1}^{\beta} \q_2 |k_3v^5_x+d_0(v^6_x-lv^2)|^2dx.
	\end{array}
	\end{equation*}
	Now, take $\gamma_1=\alpha+4\varepsilon$ and $\gamma_2=\beta-4\varepsilon$ in the above equation, then using Lemmas \ref{p3-1stlemps}-\ref{p3-3rdlem} in case of $\frac{k_1}{\rho_1}=\frac{k_2}{\rho_2}$ and $\ell=2$, we obtain \eqref{p3-4.63}.
	Finally, take $\gamma_1=\alpha+5\varepsilon$ and $\gamma_2=\beta-5\varepsilon$ in the above equation, then using Lemmas \ref{p3-1stlemps}-\ref{p3-2ndlemps}, \ref{p3-5thlemma} in case of $\frac{k_1}{\rho_1}\neq\frac{k_2}{\rho_2}$ and $\ell=4$, we obtain \eqref{p3-4.64}.  The proof is thus complete.
\end{proof}
\\\linebreak
\textbf{Proof of Theorem \ref{p3-pol-eq}}. 
First, from  Lemmas \ref{p3-1stlemps}-\ref{p3-3rdlem} and the fact that $\ell=2$, we obtain 
\begin{equation}\label{p3-4.46*}
\left\{\begin{array}{l}
\displaystyle{\int_\alpha^{\beta}|v^5_x|^2dx=O(\la^{-2})=o(1),\ \int_{\alpha+\varepsilon}^{\beta-\varepsilon}\abs{v^6}^2dx=o(1),}\ \int_{\alpha+2\varepsilon}^{\beta-2\varepsilon}|v^1_x|^2 dx =o(1)\vspace{0.25cm}\\
\displaystyle{\int_{\alpha+2\varepsilon}^{\beta-2\varepsilon}\abs{\la v^1}^2dx=o(1), \ \int_{\alpha+3\varepsilon}^{\beta-3\varepsilon}\abs{v^3_x}^2dx=o(1) \quad \text{and}\quad \int_{\alpha+4\varepsilon}^{\beta-4\varepsilon}\abs{\la v^3}^2dx=o(1)}.
\end{array}
\right.
\end{equation}
Now, from \eqref{p3-4.63}, \eqref{p3-4.46*} and the fact that $\displaystyle 0<\varepsilon<\frac{\beta-\alpha}{10}$, we deduce that $\|U\|_\HH=o(1) \ \ \text{in} \ \ (0,L)$, which contradicts \eqref{p3-H-cond}. This implies that 
$$
\sup_{\la\in \R}\|(i\la I-\AA)^{-1}\|_{\mathcal{\HH}}=O\left(\la^2\right).
$$ 
Finally, according to Theorem \ref{bt}, we obtain the desired result.
The proof is thus complete.\xqed{$\square$}
\\\linebreak
\textbf{Proof of Theorem \ref{p3-pol-neq}}. 
First, from  Lemmas \ref{p3-1stlemps}, \ref{p3-2ndlemps*}, \ref{p3-2ndlemps}, \ref{p3-5thlemma} and the fact that $\ell=4$, we obtain 
\begin{equation}\label{p3-4.46**}
\left\{\begin{array}{l}
\displaystyle{\int_\alpha^{\beta}|v^5_x|^2dx=O(\la^{-2})=o(1),\ \int_{\alpha+\varepsilon}^{\beta-\varepsilon}\abs{v^6}^2dx=o(1),}\ \int_{\alpha+2\varepsilon}^{\beta-2\varepsilon}| v^1_x|^2 dx =o(\la^{-2})\vspace{0.25cm}\\
\displaystyle{\int_{\alpha+2\varepsilon}^{\beta-2\varepsilon}\abs{ \la v^1}^2dx=o(1), \ \int_{\alpha+4\varepsilon}^{\beta-4\varepsilon}\abs{v^3_x}^2dx=o(1) \quad \text{and}\quad \int_{\alpha+5\varepsilon}^{\beta-5\varepsilon}\abs{\la v^3}^2dx=o(1)}.
\end{array}
\right.
\end{equation}
Now, from \eqref{p3-4.64}, \eqref{p3-4.46**} and the fact that $\displaystyle 0<\varepsilon<\frac{\beta-\alpha}{10}$, we deduce that $\|U\|_\HH=o(1) \ \ \text{in} \ \ (0,L)$, which contradicts \eqref{p3-H-cond}. This implies that 
$$
\sup_{\la\in \R}\|(i\la I-\AA)^{-1}\|_{\mathcal{\HH}}=O\left(\la^4\right).
$$ 
Finally, according to Theorem \ref{bt}, we obtain the desired result.
The proof is thus complete.\xqed{$\square$}
	
\section{Conclusion}
We have studied  the stabilization of a Bresse system with one  discontinuous local internal viscoelastic damping of Kelvin-Voigt type acting on the axial force under fully Dirichlet boundary conditions. We proved the strong stability of the system by using Arendt-Batty criteria. We proved that the energy of our system  decays polynomially with the rates:
\begin{equation*}
\left\{	\begin{array}{lll}
\displaystyle	t^{-1} \quad \text{if} \quad \frac{k_1}{\rho_1}=\frac{k_2}{\rho_2},\vspace{0.15cm}\\
\displaystyle 		t^{-\frac{1}{2}} \quad \text{if} \quad \frac{k_1}{\rho_1}\neq\frac{k_2}{\rho_2}.
\end{array}
\right.
\end{equation*}

\appendix
\section{Some notions and stability theorems}\label{p2-appendix}
\noindent In order to make this paper more self-contained, we recall in this short appendix some notions and stability results used in this work. 
\begin{defi}\label{App-Definition-A.1}{\rm
		Assume that $A$ is the generator of $C_0-$semigroup of contractions $\left(e^{tA}\right)_{t\geq0}$ on a Hilbert space $H$. The $C_0-$semigroup $\left(e^{tA}\right)_{t\geq0}$ is said to be 
		\begin{enumerate}
			\item[$(1)$] Strongly stable if 
			$$
			\lim_{t\to +\infty} \|e^{tA}x_0\|_H=0,\quad \forall\, x_0\in H.
			$$
			\item[$(2)$] Exponentially (or uniformly) stable if there exists two positive constants $M$ and $\varepsilon$ such that 
			$$
			\|e^{tA}x_0\|_{H}\leq Me^{-\varepsilon t}\|x_0\|_{H},\quad \forall\, t>0,\ \forall\, x_0\in H.
			$$
			\item[$(3)$] Polynomially stable if there exists two positive constants $C$ and $\alpha$ such that 
			$$
			\|e^{tA}x_0\|_{H}\leq Ct^{-\alpha}\|A x_0\|_{H},\quad \forall\, t>0,\ \forall\, x_0\in D(A).
			$$
			\xqed{$\square$}
	\end{enumerate}}
\end{defi}
\noindent To show  the strong stability of the $C_0$-semigroup $\left(e^{tA}\right)_{t\geq0}$ we rely on the following result due to Arendt-Batty \cite{Arendt01}. 
\begin{Theorem}\label{App-Theorem-A.2}{\rm
		{Assume that $A$ is the generator of a C$_0-$semigroup of contractions $\left(e^{tA}\right)_{t\geq0}$  on a Hilbert space $H$. If $A$ has no pure imaginary eigenvalues and  $\sigma\left(A\right)\cap i\mathbb{R}$ is countable,
			where $\sigma\left(A\right)$ denotes the spectrum of $A$, then the $C_0$-semigroup $\left(e^{tA}\right)_{t\geq0}$  is strongly stable.}\xqed{$\square$}}
\end{Theorem}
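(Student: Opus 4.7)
The plan is to invoke the Arendt-Batty-Lyubich-Vu theorem (1988), which is in fact the general form of the statement we wish to establish; the claimed version follows after a routine verification of the abstract hypotheses in the Hilbert space contraction setting. Two conditions must be checked for the abstract theorem: (i) $\sigma(A)\cap i\mathbb{R}$ is countable, and (ii) $A^*$ has no purely imaginary point spectrum. Condition (i) is directly assumed, so the only substantive reduction is to derive (ii) from the stated condition that $A$ itself has no purely imaginary eigenvalues.

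For this derivation, I would use the self-duality of Hilbert space combined with the fact that the adjoint $A^*$ of the $m$-dissipative generator $A$ is again $m$-dissipative on $H$, so that $e^{tA^*}$ is also a $C_0$-semigroup of contractions. If $A^* y = i\mu y$ with $y\neq 0$ and $\mu\in\mathbb{R}$, then $e^{tA^*}y = e^{i\mu t}y$ is norm-preserving for all $t\geq 0$. The classical Foias/Sz.-Nagy structure theorem for Hilbert-space contraction semigroups identifies the set of vectors with isometric forward orbit as a closed invariant subspace on which the semigroup restricts to a unitary group; transferring this unitary part back to $A$ via Hilbert self-duality produces a nonzero purely imaginary eigenvector for $A$, contradicting the hypothesis $\sigma_p(A)\cap i\mathbb{R}=\emptyset$.

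Once both spectral conditions are in place, the engine of the proof is a Laplace-transform / Tauberian argument. For $x_0\in D(A)$ and $y\in H$, the resolvent pairing $F(\lambda) := \langle R(\lambda,A)x_0, y\rangle$ is holomorphic in the half-plane $\{\mathrm{Re}\,\lambda>0\}$ and, by the spectral hypothesis, extends continuously to $i\mathbb{R}\setminus E$, where $E:=\sigma(A)\cap i\mathbb{R}$ is countable. A localization argument at each point of $E$, exploiting that no such point is an eigenvalue of $A$ or $A^*$, combined with an Ingham-Karamata type Tauberian theorem, yields $\langle e^{tA}x_0, y\rangle\to 0$ as $t\to\infty$; upgrading to strong convergence on $D(A)$ uses that $t\mapsto\|e^{tA}x_0\|$ is monotone nonincreasing (by contractivity) and the reflexivity of $H$, via an ergodic/Jacobs-de~Leeuw-Glicksberg decomposition identifying the reversible part of the semigroup with the imaginary point spectrum (which is empty).

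The main obstacle is precisely this Tauberian step in the presence of the countable exceptional set $E$, together with the correct ergodic-theoretic strengthening from weak to strong convergence; the cleanest route is to quote the theorem from \cite{Arendt01} directly, which is indeed the standard practice and the implicit intent of the way Theorem \ref{App-Theorem-A.2} is presented here as an appendix tool. A final standard $\varepsilon/3$ density argument, using contractivity of $e^{tA}$ and density of $D(A)$ in $H$, then extends strong stability from $D(A)$ to all of $H$, completing the proof.
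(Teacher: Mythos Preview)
The paper does not prove this theorem; it is stated in the appendix as a recalled result from \cite{Arendt01} with no proof given (the appendix opens with ``we recall in this short appendix some notions and stability results used in this work''). Your proposal, which ultimately amounts to quoting \cite{Arendt01} directly, is therefore exactly aligned with the paper's treatment; the Tauberian and ergodic sketch you add is extra but consistent with the standard argument.
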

\noindent Concerning the characterization of polynomial stability stability of a $C_0-$semigroup of contraction $\left(e^{tA}\right)_{t\geq 0}$ we rely on the following result due to Borichev and Tomilov \cite{Borichev01} (see also \cite{Batty01} and \cite{RaoLiu01})
\begin{Theorem}\label{bt}
	{\rm
		Assume that $A$ is the generator of a strongly continuous semigroup of contractions $\left(e^{tA}\right)_{t\geq0}$  on $\mathcal{H}$.   If   $ i\mathbb{R}\subset \rho(\mathcal{A})$, then for a fixed $\ell>0$ the following conditions are equivalent
		\begin{equation}\label{h1}
		\sup_{\lambda\in\mathbb{R}}\left\|\left(i\lambda I-\mathcal{A}\right)^{-1}\right\|_{\mathcal{L}\left(\mathcal{H}\right)}=O\left(|\lambda|^\ell\right),
		\end{equation}
		\begin{equation}\label{h2}
		\|e^{t\mathcal{A}}U_{0}\|^2_{\HH} \leq \frac{C}{t^{\frac{2}{\ell}}}\|U_0\|^2_{D(\AA)},\hspace{0.1cm}\forall t>0,\hspace{0.1cm} U_0\in D(\AA),\hspace{0.1cm} \text{for some}\hspace{0.1cm} C>0.
		\end{equation}\xqed{$\square$}}
\end{Theorem}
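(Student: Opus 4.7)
The plan is to prove the equivalence of conditions \eqref{h1} and \eqref{h2} by establishing the two implications separately. This statement is the Borichev--Tomilov characterization of polynomial decay for semigroups of contractions, and the natural strategy combines Laplace/Fourier transform representations of the resolvent and the semigroup with the spectral assumption $i\mathbb{R}\subset \rho(\mathcal{A})$.

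For the easier direction \eqref{h2}$\Rightarrow$\eqref{h1}, I would start from the Laplace transform identity
$$
(i\lambda I - \mathcal{A})^{-1}x \;=\; \int_0^\infty e^{-i\lambda t}\,e^{t\mathcal{A}}x\,dt,
$$
which is valid for $x\in D(\mathcal{A})$ since $i\lambda \in \rho(\mathcal{A})$. Splitting the integral at $t_0 \sim |\lambda|^{\ell}$ and estimating the piece on $[0,t_0]$ using the contraction property of $e^{t\mathcal A}$ and the piece on $[t_0,\infty)$ using the polynomial decay $\|e^{t\mathcal{A}}x\|\le C\, t^{-1/\ell}\,\|\mathcal{A}x\|$ (obtained from \eqref{h2} by a square-root), one obtains a bound $\|(i\lambda I-\mathcal{A})^{-1}x\|\le C\,|\lambda|^{\ell}\,\|x\|$ on $D(\mathcal A)$. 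Density of $D(\mathcal{A})$ in $\mathcal{H}$ and boundedness of the resolvent then promote this to all of $\mathcal{H}$, giving \eqref{h1}.

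The harder direction \eqref{h1}$\Rightarrow$\eqref{h2} is the core of the theorem. The strategy is to represent $e^{t\mathcal A}$ by an inverse Laplace integral
$$
t\,e^{t\mathcal{A}}\mathcal{A}^{-1}x \;=\; \frac{1}{2\pi i}\int_{\Gamma} e^{t\lambda}\,\frac{d}{d\lambda}\!\bigl(\lambda^{-1}(\lambda I - \mathcal{A})^{-1}\bigr)x\,d\lambda,
$$
for $x\in D(\mathcal A)$, where $\Gamma$ is a contour which, using $i\mathbb{R}\subset\rho(\mathcal A)$ together with \eqref{h1}, is deformed into a region of the form $\{\mathrm{Re}\,\lambda \ge -c\,(1+|\mathrm{Im}\,\lambda|)^{-\ell}\}$ for some $c>0$. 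Optimizing the distance of $\Gamma$ from the imaginary axis as a function of $t$ (the natural choice is distance of order $t^{-\ell}$) and estimating the integrand via Cauchy's differentiation of the resolvent identity yields $\|t\,e^{t\mathcal{A}}\mathcal{A}^{-1}x\|\le C\, t^{1-2/\ell}\,\|x\|$, equivalently \eqref{h2} after replacing $\mathcal{A}^{-1}x$ by $U_0$ and $x$ by $\mathcal{A}U_0$.

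The main obstacle is the passage from the resolvent bound on $i\mathbb{R}$ to a quantitative bound in an analyticity neighborhood in the left half-plane, which is what makes the contour deformation legitimate. The standard tool is the Neumann series
$$
(\lambda I - \mathcal{A})^{-1} \;=\; \sum_{k\ge 0}(\mathrm{Re}\,\lambda)^{k}\,\bigl(i\,\mathrm{Im}\,\lambda\,I-\mathcal{A}\bigr)^{-(k+1)},
$$
which converges precisely when $|\mathrm{Re}\,\lambda|\le C^{-1}(1+|\mathrm{Im}\,\lambda|)^{-\ell}$, exactly matching the contour region described above. A delicate point is tracking the powers of $|\mathrm{Im}\,\lambda|$ through the differentiated resolvent $\frac{d}{d\lambda}(\lambda^{-1}(\lambda I-\mathcal{A})^{-1})$ so that the integral converges and yields the sharp exponent $2/\ell$; this optimality is a companion result in the original Borichev--Tomilov paper, where a lower-bound construction shows that the exponent cannot be improved in general.
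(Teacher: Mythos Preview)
The paper does not prove this theorem. Theorem~\ref{bt} appears in the appendix as a quoted result from the literature, attributed to Borichev and Tomilov \cite{Borichev01} (with additional references to \cite{Batty01} and \cite{RaoLiu01}); the statement is followed immediately by the end-of-statement marker and no argument is given. So there is no ``paper's own proof'' to compare your proposal against---the authors simply invoke this theorem as a black box to deduce Theorems~\ref{p3-pol-eq} and \ref{p3-pol-neq} from the resolvent estimate \eqref{p3-H-cond}.

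That said, a brief comment on your sketch: the overall architecture (Laplace representation, Neumann-series extension of the resolvent into a parabolic region to the left of $i\mathbb{R}$, contour deformation) is indeed the spirit of the Borichev--Tomilov argument. One technical point to be careful about in the direction \eqref{h2}$\Rightarrow$\eqref{h1}: the formula $(i\lambda I-\mathcal A)^{-1}x=\int_0^\infty e^{-i\lambda t}e^{t\mathcal A}x\,dt$ is not a priori valid here, since for a merely bounded (contraction) semigroup the integral need not converge absolutely. One works instead either with a regularized Laplace transform or directly with the identity $(i\lambda I-\mathcal A)^{-1}=\mathcal A^{-1}+i\lambda(i\lambda I-\mathcal A)^{-1}\mathcal A^{-1}$ combined with the decay on $D(\mathcal A)$; your splitting idea can be made rigorous along these lines, but as written it skips over this convergence issue.
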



\begin{thebibliography}{10}

\bibitem{Wehbe08}
F.~Abdallah, M.~Ghader, and A.~Wehbe.
\newblock Stability results of a distributed problem involving {B}resse system
  with history and/or {C}attaneo law under fully {D}irichlet or mixed boundary
  conditions.
\newblock {\em Math. Methods Appl. Sci.}, 41(5):1876--1907, 2018.

\bibitem{doi:10.1002/mma.6070}
M.~Afilal, A.~Guesmia, A.~Soufyane, and M.~Zahri.
\newblock On the exponential and polynomial stability for a linear {B}resse
  system.
\newblock {\em Mathematical Methods in the Applied Sciences}, 43(5):2626--2645,
  2020.

\bibitem{akilbadawiwehbe2020stability}
M.~Akil, H.~Badawi, and A.~Wehbe.
\newblock Stability results of a singular local interaction
  elastic/viscoelastic coupled wave equations with time delay.
\newblock {\em arXiv e-prints}, page arXiv:2007.08316, July 2020.

\bibitem{Akil2020}
M.~Akil, Y.~Chitour, M.~Ghader, and A.~Wehbe.
\newblock Stability and exact controllability of a {T}imoshenko system with
  only one fractional damping on the boundary.
\newblock {\em Asymptotic Analysis}, 119:221--280, 2020.
\newblock 3-4.

\bibitem{akilissawehbe}
M.~{Akil}, I.~{Issa}, and A.~{Wehbe}.
\newblock {Stability results of an elastic/viscoelastic transmission problem of
  locally coupled waves with non smooth coefficients}.
\newblock {\em arXiv e-prints}, page arXiv:2004.06758, Apr. 2020.

\bibitem{ALABAUBOUSSOUIRA2011481}
F.~{Alabau Boussouira}, J.~E. {Muñoz Rivera}, and D.~da~S.~{Almeida Júnior}.
\newblock Stability to weak dissipative {B}resse system.
\newblock {\em Journal of Mathematical Analysis and Applications}, 374(2):481
  -- 498, 2011.

\bibitem{Alves2014}
M.~Alves, J.~M. Rivera, M.~Sep{\'{u}}lveda, and O.~V. Villagr{\'{a}}n.
\newblock The lack of exponential stability in certain transmission problems
  with localized {K}elvin--{V}oigt dissipation.
\newblock {\em {SIAM} Journal on Applied Mathematics}, 74(2):345--365, Jan.
  2014.

\bibitem{Alves2013}
M.~Alves, J.~M. Rivera, M.~Sep{\'{u}}lveda, O.~V. Villagr{\'{a}}n, and M.~Z.
  Garay.
\newblock The asymptotic behavior of the linear transmission problem in
  viscoelasticity.
\newblock {\em Mathematische Nachrichten}, 287(5-6):483--497, Oct. 2013.

\bibitem{Arendt01}
W.~Arendt and C.~J.~K. Batty.
\newblock Tauberian theorems and stability of one-parameter semigroups.
\newblock {\em Trans. Amer. Math. Soc.}, 306(2):837--852, 1988.

\bibitem{BASSAM20151177}
M.~Bassam, D.~Mercier, S.~Nicaise, and A.~Wehbe.
\newblock Polynomial stability of the {T}imoshenko system by one boundary
  damping.
\newblock {\em Journal of Mathematical Analysis and Applications}, 425(2):1177
  -- 1203, 2015.

\bibitem{doi:10.1002/zamm.201500172}
M.~Bassam, D.~Mercier, S.~Nicaise, and A.~Wehbe.
\newblock Stability results of some distributed systems involving
  mindlin-{T}imoshenko plates in the plane.
\newblock {\em ZAMM - Journal of Applied Mathematics and Mechanics /
  Zeitschrift für Angewandte Mathematik und Mechanik}, 96(8):916--938, 2016.

\bibitem{Batty01}
C.~J.~K. Batty and T.~Duyckaerts.
\newblock \href {http://dx.doi.org/10.1007/s00028-008-0424-1} {Non-uniform
  stability for bounded semi-groups on {B}anach spaces}.
\newblock {\em J. Evol. Equ.}, 8(4):765--780, 2008.

\bibitem{Borichev01}
A.~Borichev and Y.~Tomilov.
\newblock Optimal polynomial decay of functions and operator semigroups.
\newblock {\em Math. Ann.}, 347(2):455--478, 2010.

\bibitem{deLima2018}
P.~R. de~Lima and H.~D. Fern{\'a}ndez~Sare.
\newblock Stability of thermoelastic {B}resse systems.
\newblock {\em Zeitschrift f{\"u}r angewandte Mathematik und Physik}, 70(1):3,
  Nov 2018.

\bibitem{ElArwadi2019}
T.~El~Arwadi and W.~Youssef.
\newblock On the stabilization of the {B}resse beam with {K}elvin--{V}oigt
  damping.
\newblock {\em Applied Mathematics {\&} Optimization}, Sep 2019.

\bibitem{doi:10.1080/00036811.2018.1520982}
L.~H. Fatori, M.~de~Oliveira~Alves, and H.~D.~F. Sare.
\newblock Stability conditions to {B}resse systems with indefinite memory
  dissipation.
\newblock {\em Applicable Analysis}, 99(6):1066--1084, 2020.

\bibitem{FATORI2012600}
L.~H. Fatori and R.~N. Monteiro.
\newblock The optimal decay rate for a weak dissipative {B}resse system.
\newblock {\em Applied Mathematics Letters}, 25(3):600 -- 604, 2012.

\bibitem{10.1093/imamat/hxq038}
L.~H. Fatori and J.~E. Muñoz~Rivera.
\newblock {Rates of decay to weak thermoelastic {B}resse system}.
\newblock {\em IMA Journal of Applied Mathematics}, 75(6):881--904, 06 2010.

\bibitem{gerbi2020stabilization}
S.~Gerbi, C.~Kassem, and A.~Wehbe.
\newblock Stabilization of non-smooth transmission problem involving {B}resse
  systems.
\newblock {\em arXiv e-prints}, page arXiv:2006.16595, June 2020.

\bibitem{doi:10.1002/mma.6918}
M.~Ghader, R.~Nasser, and A.~Wehbe.
\newblock Optimal polynomial stability of a string with locally distributed
  {K}elvin–{V}oigt damping and nonsmooth coefficient at the interface.
\newblock {\em Mathematical Methods in the Applied Sciences}.
\newblock to appear.

\bibitem{ghader2020stability}
M.~Ghader, R.~Nasser, and A.~Wehbe.
\newblock Stability results for an elastic-viscoelastic waves interaction
  systems with localized {K}elvin-{V}oigt damping and with an internal or
  boundary time delay.
\newblock {\em arXiv e-prints}, page arXiv:2003.12967, Mar. 2020.

\bibitem{ghader2020transmission2}
M.~Ghader and A.~Wehbe.
\newblock A transmission problem for the {T}imoshenko system with one local
  {K}elvin--{V}oigt damping and non-smooth coefficient at the interface.
\newblock {\em arXiv e-prints}, page arXiv:2005.12756, May 2020.

\bibitem{Guesmia2017}
A.~Guesmia.
\newblock Asymptotic stability of {B}resse system with one infinite memory in
  the longitudinal displacements.
\newblock {\em Mediterranean Journal of Mathematics}, 14(2):49, Mar 2017.

\bibitem{doi:10.1002/mma.3228}
A.~Guesmia and M.~Kafini.
\newblock {B}resse system with infinite memories.
\newblock {\em Mathematical Methods in the Applied Sciences},
  38(11):2389--2402, 2015.

\bibitem{F.HASSINE2015}
F.~Hassine.
\newblock Stability of elastic transmission systems with a local
  {K}elvin--{V}oigt damping.
\newblock {\em European Journal of Control}, 23:84--93, May 2015.

\bibitem{HASSINE201584}
F.~Hassine.
\newblock Energy decay estimates of elastic transmission wave/beam systems with
  a local {K}elvin--{V}oigt damping.
\newblock {\em International Journal of Control}, 89(10):1933--1950, June 2016.

\bibitem{Hayek}
A.~Hayek, S.~Nicaise, Z.~Salloum, and A.~Wehbe.
\newblock A transmission problem of a system of weakly coupled wave equations
  with {K}elvin-{V}oigt dampings and non-smooth coefficient at the interface.
\newblock {\em SeMA J.}, 77(3):305--338, 2020.

\bibitem{Huang-1988}
F.~Huang.
\newblock On the mathematical model for linear elastic systems with analytic
  damping.
\newblock {\em {SIAM} Journal on Control and Optimization}, 26(3):714--724, may
  1988.

\bibitem{Kato01}
T.~Kato.
\newblock {\em Perturbation Theory for Linear Operators}.
\newblock Springer Berlin Heidelberg, 1995.

\bibitem{LagneseLeugering01}
J.~E. Lagnese, G.~Leugering, and E.~J. P.~G. Schmidt.
\newblock {\em Modeling, Analysis and Control of Dynamic Elastic Multi-Link
  Structures}.
\newblock Birkh\"{a}user Boston, 1994.

\bibitem{chenLiuLiu-1998}
K.~Liu, S.~Chen, and Z.~Liu.
\newblock \href{https://doi.org/10.1137/s0036139996292015} {Spectrum and
  Stability for Elastic Systems with Global or Local {K}elvin--{V}oigt
  Damping}.
\newblock {\em {SIAM} Journal on Applied Mathematics}, 59(2):651--668, jan
  1998.

\bibitem{RaoLiu01}
Z.~Liu and B.~Rao.
\newblock Characterization of polynomial decay rate for the solution of linear
  evolution equation.
\newblock {\em Z. Angew. Math. Phys.}, 56(4):630--644, 2005.

\bibitem{RaoLiu03}
Z.~Liu and B.~Rao.
\newblock \href {http://dx.doi.org/10.1007/s00033-008-6122-6}{Energy decay rate
  of the thermoelastic {B}resse system}.
\newblock {\em Z. Angew. Math. Phys.}, 60(1):54--69, 2009.

\bibitem{Liu2016}
Z.~Liu and Q.~Zhang.
\newblock \href {https://doi.org/10.1137/15m1049385} {Stability of a String
  with Local {K}elvin--{V}oigt Damping and Nonsmooth Coefficient at Interface}.
\newblock {\em {SIAM} Journal on Control and Optimization}, 54(4):1859--1871,
  Jan. 2016.

\bibitem{doi:10.1137/15M1049385}
Z.~Liu and Q.~Zhang.
\newblock Stability of a string with local {K}elvin--{V}oigtdamping and
  nonsmooth coefficient at interface.
\newblock {\em SIAM Journal on Control and Optimization}, 54(4):1859--1871,
  2016.

\bibitem{Wehbe03}
N.~Najdi and A.~Wehbe.
\newblock Weakly locally thermal stabilization of {B}resse systems.
\newblock {\em Electron. J. Differential Equations}, pages No. 182, 19, 2014.

\bibitem{NASSER2019272}
R.~Nasser, N.~Noun, and A.~Wehbe.
\newblock Stabilization of the wave equations with localized {K}elvin–{V}oigt
  type damping under optimal geometric conditions.
\newblock {\em Comptes Rendus Mathematique}, 357(3):272 -- 277, 2019.

\bibitem{Wehbe02}
N.~Noun and A.~Wehbe.
\newblock Stabilisation faible interne locale de syst\`eme \'elastique de
  {B}resse.
\newblock {\em C. R. Math. Acad. Sci. Paris}, 350(9-10):493--498, 2012.

\bibitem{Portillo2017}
H.~P. Oquendo.
\newblock \href{https://doi.org/10.1002/mma.4510} {Frictional versus
  {K}elvin--{V}oigt damping in a transmission problem}.
\newblock {\em Mathematical Methods in the Applied Sciences},
  40(18):7026--7032, July 2017.

\bibitem{Pazy01}
A.~Pazy.
\newblock {\em Semigroups of linear operators and applications to partial
  differential equations}, volume~44 of {\em Applied Mathematical Sciences}.
\newblock Springer-Verlag, New York, 1983.

\bibitem{rivera2018}
J.~E.~M. Rivera, O.~V. Villagran, and M.~Sepulveda.
\newblock \href{https://doi.org/10.1080/03605302.2018.1475490} {Stability to
  localized viscoelastic transmission problem}.
\newblock {\em Communications in Partial Differential Equations},
  43(5):821--838, May 2018.

\bibitem{doi:10.1080/00036810903156149}
A.~Wehbe and W.~Youssef.
\newblock Stabilization of the uniform {T}imoshenko beam by one locally
  distributed feedback.
\newblock {\em Applicable Analysis}, 88(7):1067--1078, 2009.

\bibitem{Wehbe01}
A.~Wehbe and W.~Youssef.
\newblock Exponential and polynomial stability of an elastic {B}resse system
  with two locally distributed feedbacks.
\newblock {\em J. Math. Phys.}, 51(10):103523, 17, 2010.

\end{thebibliography}

\end{document}